\newcommand{\prob}{\mathbb{P}}
\newcommand{\Prob}[1]{\prob\left(#1\right)}
\newcommand{\expec}{\mathbb{E}}
\newcommand{\Exp}[1]{\expec\left[#1\right]}
\newcommand{\Var}[1]{\textup{Var}\left(#1\right)}
\newcommand{\ind}[1]{\mathbbm{1}_{\left\{#1\right\}}}
\newcommand{\cs}[1]{\textcolor{blue}{\small({\sf Clara:} {{#1}})}}
\newcommand{\bz}[1]{\textcolor{red}{\small({\sf Bert:} {{#1}})}}
\numberwithin{equation}{section}
\newtheorem{theorem}{Theorem}[section]
\newtheorem{lemma}[theorem]{Lemma}
\newtheorem{proposition}[theorem]{Proposition}
\begin{document}

\title{Large deviations for triangles in scale-free random graphs}
\author{Clara Stegehuis and Bert Zwart }
\date{\today}

\maketitle

\begin{abstract}
    We provide large deviations estimates for the upper tail of the number of triangles in scale-free inhomogeneous random graphs where node degrees have tails which are power laws with index $-\alpha, \alpha \in (1,2)$. We show that upper tail probabilities for triangles undergo a phase transition. For $\alpha<4/3$, the upper tail is caused by many vertices of degree of order $n$, and this probability is semi-exponential. Additional triangles consist of two hubs. For $\alpha>4/3$ on the other hand, the upper tail is caused by one hub of a specific degree, and this probability decays polynomially in $n$, leading to additional triangles with one hub. In the intermediate case $\alpha=4/3$, we show polynomial decay of the tail probability caused by multiple but finitely many hubs. In this case, the additional triangles contain either a single hub or two hubs. Our proofs are partly based on various concentration inequalities. In particular, we tailor concentration bounds for empirical processes to make them well-suited for analyzing heavy-tailed phenomena in nonlinear settings. 
\end{abstract}

\section{Introduction and main results}
\label{sec:intro}
Many real-world networks were found to have degree distributions that can be approximated by a power-law distribution, where the fraction of vertices of degree $k$ scales as a power law with infinite variance~\cite{vazquez2002}. Therefore, random graph models that serve as benchmarks for these real-world networks often focus on networks with power-law degree distributions. These random graphs are constructed to have similar degree distributions as real-world networks, but the model does not prescribe other graph properties. The behavior of network properties of random graph models with a prescribed degree sequence has therefore been an object of intensive study~\cite{dhara2019,friedrich2015a,heydari2017,janssen2019,hofstad2017,yin2019a}.

In this paper, we focus on the property of triangle counts. Triangle counts measure the tendency of two neighbors of a vertex to be connected as well, allowing to analyze the network's clustering properties. While many real-world networks were found to be highly clustered, many random graph models are locally tree-like, and therefore only contain few triangles in the large-network limit. In power-law random graphs however, the random graphs may still possess a polynomial number of triangles, and the average clustering coefficient vanishes extremely slowly in the network size~\cite{hofstad2017b}. 
Motivated by this slow decay of the average clustering coefficient, we focus on the question: How
unlikely is it that a power-law random graph contains a large number of triangles? 

The tail probability for triangle counts in Erd\H os-R\'enyi random graphs has been studied extensively since~\cite{janson2004,janson2004a,kim2004}, and a matching upper and lower bound were finally provided in~\cite{chatterjee2012missing,demarco2011} when the random graphs are not too sparse, that is, when the average degree of each vertex tends to infinity in the network size; see also \cite{Augeri2020}. For sparser Erd\H os-R\'enyi graphs with finite average vertex degrees,~\cite{chakrabarty2021} showed that the probability of observing many triangles is extremely small, and that a localized almost clique structure drives the unlikely event that a large number of triangles is present. 

Significantly fewer results exist on large deviations for random graphs with heavy-tailed degrees. Most existing work assumes light-tailed degree distributions, or a finite second moment~\cite{andreis2021,chakrabarty2021,dommers2018}, as this allows to write the connection probability in a product form of the weights of the two involved nodes, while an infinite second moment makes this impossible. Under an infinite second moment, the connection probability depends on both vertex weights in a way that cannot be split into their individual contributions, creating so-called degree-degree correlations~\cite{stegehuis2017b,yao2017}.
Other work focuses on a regime where the average degree grows~\cite{oliveira2019}, which is not applicable when the tail of the degree distribution behaves like a power law with index $-\alpha, \alpha \in (1,2)$. Results on large-deviations analysis for power-law random graphs are so far restricted to the Pagerank functional \cite{Litvak2017, Mariana2021}, 
and edge counts \cite{kerriou2022, stegehuis2022scale}.

In this paper, we derive tail asymptotics for the probability that the number of triangles is larger than average for the sparse, power-law case with $\alpha \in(1,2)$. Interestingly, we show that there is a phase transition in the degree exponent $\alpha$. When $\alpha < 4/3$, the probability that the number of triangles is larger than expected decays semi-exponential in the network size $n$. For $\alpha\geq 4/3$ on the other hand, this probability decays polynomially in $n$. Furthermore, in contrast to the non-power-law case, deviations of the triangle counts are caused by the presence of one or more hubs of specific degree.

\paragraph{Notation.} Let $f(n) \sim g(n)$ denote $f(n)/g(n)\rightarrow 1$. We say that $U$ is a regularly varying function with index $\rho$ when for every $\lambda>0$, $\lim_{x\to\infty}U(\lambda x)/U(x)=\lambda^\rho$. Furthermore, we say that a sequence of events $\{\mathcal{E}_n\}_{n\geq 1}$ happens with high probability when $\lim_{n\to\infty}\Prob{\mathcal{E}_n}=1.$ We denote the indicator function by $I(\cdot)$.

\subsection{Model description}
To give a precise description of our main results, we now provide a model description. 
We consider the rank-1 inhomogeneous random graph (or hidden variable model). This model constructs simple graphs with soft constraints on the degree sequence~\cite{boguna2003,chung2002}. The graph consists of $n$ vertices with non-negative weights $W_i, i=1,...,n$. These weights are an i.i.d.\ sample from the continuous heavy-tailed distribution $F(x),$ 
	\begin{equation}
		\label{D-tail}
		\bar{F}(x) := \Prob{W>x}= x^{-\alpha}L(x), \quad x\geq 0
	\end{equation}
 for some slowly varying function $L(x)$, $\alpha \in (1,2)$. 

We denote  $\mu = \Exp{W_i}$. Then, every pair of vertices with weights $(h,h')$ is independently connected with probability $p(h,h')$. In this paper, we take
	\begin{equation}\label{eq:phh}
		p(h,h')=\min\left(\frac{hh'}{\mu n},1\right),
	\end{equation}
	which is the Chung-Lu version of the rank-1 inhomogeneous random graph~\cite{chung2002}. This connection probability ensures that the degree of a vertex with weight $h$ will be close to $h$~\cite{boguna2003}. 
	
	We are interested in the number of triangles $\triangle_n$ contained in a sample of the rank-1 homogeneous random graph. Denote
 \begin{equation}\label{eq:fn}
     f_n(u,v,w) = \min \Big\{ \frac {uv}{\mu n},1 \Big\}   \min \Big\{ \frac {vw}{\mu n},1 \Big\}   \min \Big\{ \frac {uw}{\mu n},1 \Big\}.
 \end{equation}
	The next result, proven in Appendix \ref{app:sec14appendix}, describes the growth rate of $m_n=\Exp{\triangle_n}$, extending previous work ~\cite{stegehuis2019b, hofstad2017d} on the pure power law case.

	\begin{lemma}
\label{lemma-mean} Let $\alpha \in (1,2)$. 
$m_n = (1+o(1))H n^3 (\bar F(\sqrt{n}))^3$, with 
\begin{equation}
    H=\frac{\alpha^3}{6} \int_{u=0}^\infty\int_{v=0}^\infty\int_{w=0}^\infty f_1(u,v,w)  u^{-\alpha-1} v^{-\alpha-1} w^{-\alpha-1} dudvdw.
\end{equation}
In particular, $m_n$ is regularly varying with index $3- \frac 32 \alpha$.
\end{lemma}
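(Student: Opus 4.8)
The plan is to reduce the mean to a symmetric three-fold expectation over the weights, rescale by $\sqrt n$ so that all of the heavy-tailed content is carried by the single regularly varying factor $\bar F(\sqrt n)^3$, and then pass to the limit inside the resulting integral. Conditioning on $W_1,\dots,W_n$, the three possible edges of a triple $\{i,j,k\}$ are independent, so by \eqref{eq:phh}--\eqref{eq:fn}
\begin{equation}
\CExp{\triangle_n}{W_1,\dots,W_n}=\sum_{i<j<k}p(W_i,W_j)p(W_j,W_k)p(W_i,W_k)=\sum_{i<j<k}f_n(W_i,W_j,W_k),
\end{equation}
and since the weights are i.i.d., exchangeability yields $m_n=\binom n3\Exp{f_n(W_1,W_2,W_3)}$. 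The elementary but crucial point is the scaling identity $f_n(u,v,w)=f_1(u/\sqrt n,v/\sqrt n,w/\sqrt n)$, immediate from \eqref{eq:fn}. Writing $\nu_n$ for the measure on $(0,\infty)$ with $\nu_n(A)=\Prob{W/\sqrt n\in A}/\bar F(\sqrt n)$, a change of variables gives
\begin{equation}
\Exp{f_n(W_1,W_2,W_3)}=\bar F(\sqrt n)^3\int_{(0,\infty)^3}f_1(x,y,z)\,\nu_n(\dd x)\,\nu_n(\dd y)\,\nu_n(\dd z).
\end{equation}

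By regular variation of $\bar F$ with index $-\alpha$, the measures $\nu_n$ converge vaguely on $(0,\infty]$ to $\nu$ with $\nu((x,\infty])=x^{-\alpha}$, i.e.\ $\nu(\dd x)=\alpha x^{-\alpha-1}\dd x$. Granting that the triple integral converges to $\int_{(0,\infty)^3}f_1\,\dd\nu^{\otimes 3}$, and noting that $\alpha^3\int f_1(x,y,z)(xyz)^{-\alpha-1}\dd x\,\dd y\,\dd z=6H$ while $\binom n3=(1+o(1))n^3/6$, we obtain $m_n=(1+o(1))Hn^3\bar F(\sqrt n)^3$. Finally, $\bar F(\sqrt n)=n^{-\alpha/2}L(\sqrt n)$ and $n\mapsto L(\sqrt n)^3$ is slowly varying, so $m_n$ is regularly varying with index $3-\tfrac32\alpha$; $H<\infty$ will come for free from the estimates below.

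The step I expect to be the main obstacle is precisely the interchange of limit and integral, which does \emph{not} follow from vague convergence alone: $f_1$ is bounded by $1$ but stays positive on all of $(0,\infty)^3$ (it does not vanish at infinity), while $\nu$ carries infinite mass near the origin, so no global factorised domination exists --- indeed $\int_0^\infty x^\beta\,\nu(\dd x)=\infty$ for every $\beta\ge 0$. The proposal is to partition $(0,\infty)^3$ into cells according to whether each coordinate lies in $(0,\delta)$, $[\delta,M]$, or $(M,\infty)$. On the central cell $[\delta,M]^3$ --- which is bounded away from $0$ and $\infty$ and has $\nu^{\otimes3}$-null boundary --- vague convergence gives $\int_{[\delta,M]^3}f_1\,\dd\nu_n^{\otimes3}\to\int_{[\delta,M]^3}f_1\,\dd\nu^{\otimes3}$, and the latter increases to $6H$ as $\delta\downarrow 0$, $M\uparrow\infty$. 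On every other cell one uses $\min(t,1)\le t^{\beta}$ for $\beta\in[0,1]$ to bound $f_1\le(xy/\mu)^{\beta_1}(yz/\mu)^{\beta_2}(xz/\mu)^{\beta_3}$, which factorises the cell integral into one-dimensional integrals $\int x^{\gamma}\,\nu_n(\dd x)$ over $(0,\delta)$, $[\delta,M]$, or $(M,\infty)$; these are estimated via Karamata's theorem applied to the truncated moments $\Exp{W^{\gamma}\ind{W<t}}$ and $\Exp{W^{\gamma}\ind{W>t}}$. The exponents can be chosen cell by cell so that every coordinate not in $(M,\infty)$ carries total power $\gamma>\alpha$ --- making its factor $O(\delta^{\gamma-\alpha})$ if the coordinate is in $(0,\delta)$, and bounded if it is in $[\delta,M]$ --- while every coordinate in $(M,\infty)$ carries total power $\gamma<\alpha$, making its factor $O(M^{\gamma-\alpha})$; this is feasible exactly because $\alpha\in(1,2)$: the relevant exponent windows, of the form $(\alpha,2]$ and $(\alpha-1,\alpha/2)$, are then nonempty, and $\mu=\Exp W<\infty$ validates the truncated-moment estimates of power down to $1$ that arise in the cells with several large coordinates. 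Taking $\delta=1/M$, each non-central cell contributes $O(M^{-c})$ for some $c=c(\alpha)>0$, uniformly in $n$; the identical bounds with $\nu$ in place of $\nu_n$ show $\int_{(0,\infty)^3}f_1\,\dd\nu^{\otimes3}<\infty$, hence $H<\infty$. Summing the finitely many cells then yields $\int_{(0,\infty)^3}f_1\,\dd\nu_n^{\otimes3}\to 6H$, which completes the argument.
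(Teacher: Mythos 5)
Your argument is correct and reaches the same limit constant, but the technical core is genuinely different from the paper's. The paper also rescales by $\sqrt n$, but then writes $f_1(u,v,w)=\int_0^u\int_0^v\int_0^w h$ for a density $h$, uses Fubini to convert the three-fold integral against the rescaled weight distribution into an integral of $h(u,v,w)\prod\bar F(u\sqrt n)/\bar F(\sqrt n)$, dominates the ratio globally via the Potter bounds by $Mu^{-\alpha\mp\delta}$, and concludes by dominated convergence; the integrability check is then a single computation for the dominating function. You instead keep the measure-level formulation (vague convergence of $\nu_n$ to $\alpha x^{-\alpha-1}\dd x$) and justify the interchange by a truncation into cells, bounding $\min(t,1)\le t^\beta$ edge by edge and invoking Karamata for the truncated and tail moments. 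Your route avoids the density representation of $f_1$ and the Potter bounds, at the cost of a cell-by-cell exponent bookkeeping over the $26$ non-central cells; the paper's route is shorter but hinges on the pointwise domination of $\bar F(u\sqrt n)/\bar F(\sqrt n)$. Both are standard and both work.

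One point in your sketch should be tightened: for a coordinate confined to $[\delta,M]$ carrying total power $\gamma>\alpha$, the factor $\int_\delta^M x^\gamma\,\nu_n(\dd x)$ is \emph{not} bounded once you set $\delta=1/M$ and let $M\to\infty$; by the same Karamata estimate it is of order $M^{\gamma-\alpha}$. So the per-cell bound is not simply (decaying factor) $\times$ (bounded factors); you must check that the total exponent of $M$ over the three coordinates is strictly negative for each cell. This does hold for every non-central cell under your sign convention ($\gamma_i>\alpha$ for coordinates in $(0,\delta)\cup[\delta,M]$, $\gamma_i<\alpha$ for coordinates in $(M,\infty)$): writing $\gamma_1=\beta_1+\beta_3$, $\gamma_2=\beta_1+\beta_2$, $\gamma_3=\beta_2+\beta_3$, the net exponent in each cell reduces to an expression such as $2\beta_j-\alpha$ or $2(\beta_1+\beta_2+\beta_3)-3\alpha$, which can always be made negative within the admissible windows you identify (nonempty precisely because $1<\alpha<2$). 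With that verification spelled out, the proof is complete.
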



%

	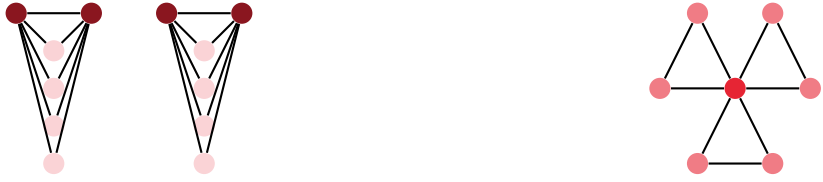
\begin{figure}[tb]
	\definecolor{mycolor1}{RGB}{230,37,52}%
	\tikzstyle{every node}=[circle,fill=black!25,minimum size=8pt,inner sep=0pt]
	\tikzstyle{S1}=[fill=mycolor1!60]
	\tikzstyle{S2}=[fill=mycolor1!60!black]
	\tikzstyle{S3}=[fill=mycolor1]
	\tikzstyle{n1}=[fill=mycolor1!20]
	\begin{subfigure}[t]{0.46\linewidth}
		\centering
		\begin{tikzpicture}
		\tikzstyle{edge} = [draw,thick,-]
		\node[S2] (a) at (0,0) {};
		\node[S2] (b) at (1,0) {};
		\node[n1] (c) at (0.5,-0.5) {};
		\node[n1] (d) at (0.5,-1) {};
		\node[n1] (e) at (0.5,-1.5) {};
		\node[n1] (f) at (0.5,-2) {};
		\draw[edge] (a)--(b);
		\draw[edge] (c)--(b);
		\draw[edge] (d)--(b);
		\draw[edge] (a)--(c);
		\draw[edge] (a)--(d);
		\draw[edge] (e)--(b);
		\draw[edge] (f)--(b);
		\draw[edge] (e)--(a);
		\draw[edge] (f)--(a);
		
		\node[S2] (a1) at (2,0) {};
		\node[S2] (b1) at (3,0) {};
		\node[n1] (c1) at (2.5,-0.5) {};
		\node[n1] (d1) at (2.5,-1) {};
		\node[n1] (e1) at (2.5,-1.5) {};
		\node[n1] (f1) at (2.5,-2) {};
		\draw[edge] (a1)--(b1);
		\draw[edge] (c1)--(b1);
		\draw[edge] (d1)--(b1);
		\draw[edge] (a1)--(c1);
		\draw[edge] (a1)--(d1);
		\draw[edge] (e1)--(b1);
		\draw[edge] (f1)--(b1);
		\draw[edge] (e1)--(a1);
		\draw[edge] (f1)--(a1);
		\end{tikzpicture}	
		\caption{$\alpha<4/3$ or more than $n$ triangles: exponential deviations, caused by many hubs of weight of $O(n)$ that form many triangles with one node of constant weight.}
		\label{fig:expdev}
	\end{subfigure}
	\hfill
	\begin{subfigure}[t]{0.46\linewidth}
		\centering
		\begin{tikzpicture}
		\tikzstyle{edge} = [draw,thick,-]
		\node[S3] (a) at (0,0) {};
		\node[S1] (b) at (1,0) {};
		\node[S1] (c) at (0.5,1) {};
		\node[S1] (d) at (-0.5,1) {};
		\node[S1] (e) at (-1,0) {};
		\node[S1] (f) at (-0.5,-1) {};
		\node[S1] (g) at (0.5,-1) {};
		\draw[edge] (a)--(b);
		\draw[edge] (c)--(b);
		\draw[edge] (d)--(a);
		\draw[edge] (a)--(c);
		\draw[edge] (a)--(e);
		\draw[edge] (a)--(f);
		\draw[edge] (a)--(g);
		\draw[edge] (d)--(e);
		\draw[edge] (f)--(g);
		\end{tikzpicture}	
		\caption{$\alpha>4/3$: polynomial deviations, caused by one hub of weight of $O(n^{(\alpha+2\theta)/(4(\alpha-1))})$ 
		that forms triangles with vertices of lower degrees. $\theta=0$ in Theorem \ref{thm-triangle-singlebigjump}.} 
		\label{fig:poldev}
	\end{subfigure}
	\caption{Illustration of the events that cause polynomial and exponential deviations.}
	\end{figure}



\subsection{Main results and discussion}\label{sec:results}

%

We are interested in the event that $\triangle_n$ deviates from its mean $m_n$ by a factor $a>0$. It turns out that there is a qualitative difference determined by the question whether 
$m_n$ is increasing to $\infty$ faster than $n$ or not, which, due to Lemma \ref{lemma-mean}, is determined by whether the parameter $\alpha$ is bigger than or smaller than $4/3$. 

\paragraph*{The case $\alpha>4/3$: single hub}
As we will see, an unusually large number of triangles will be caused by one hub. To determine how large a hub needs to be, we define for $a>0$,

\begin{equation}
\label{definition-can}
c_a(n) := \inf \Big\{ c: n^2 \frac 12 \int_0^\infty \int_0^\infty f_n(x,y,c) dF(x)dF(y) \geq m_n a \Big\}.
\end{equation}
Intuitively, $c_a(n)$ is the smallest size of a hub needed to create $a m_n$ additional triangles. The next lemma estimates its order of magnitude. Its proof is given in Appendix \ref{app:sec14appendix}. 
\begin{lemma}
\label{lemma-can}
$c_a(n)$ is regularly varying of index $\beta = \frac 14 \frac{\alpha}{\alpha-1}$. 
In particular, there exists a slowly varying function $L^*$ such that 
\begin{equation}
    c_a(n) \sim L^*(n) n^\beta a^{\frac 12\frac 1{\alpha-1}}
\end{equation}
for every $a>0$.
\end{lemma}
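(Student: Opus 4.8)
\emph{Proof idea.} The plan is to read off $c_a(n)$ as the (asymptotic) inverse of a regularly varying function. Write
\[
g_n(c):=\frac{n^2}{2}\int_0^\infty\!\!\int_0^\infty f_n(x,y,c)\,dF(x)\,dF(y),
\]
so that $c_a(n)=\inf\{c:g_n(c)\ge am_n\}$; up to the negligible difference between $\binom n2$ and $n^2/2$, $g_n(c)$ is the expected number of triangles through a fixed vertex of weight $c$. Since $c\mapsto f_n(x,y,c)$ is non-decreasing, $g_n$ is non-decreasing, and it is continuous by dominated convergence, with $g_n(0+)=0$; moreover, by monotone convergence $g_n(\infty)=\frac{n^2}{2}\Exp{\min(W_1W_2/(\mu n),1)}\ge\frac{n}{2\mu}\Exp{W_1W_2\ind{W_1W_2\le\mu n}}$, which is $\sim\frac{\mu n}{2}$ because $\Exp{W_1W_2\ind{W_1W_2\le\mu n}}\to\Exp{W_1W_2}=\mu^2<\infty$. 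As $\alpha>4/3$ gives $m_n=n^{\,3-\frac32\alpha+o(1)}=o(n)$ by Lemma~\ref{lemma-mean}, we have $am_n<g_n(\infty)$ for $n$ large, hence $c_a(n)<\infty$ and $g_n(c_a(n))=am_n$. I would then proceed in three steps: (i) determine the exact order of $g_n(c_n)$ for $c_n=n^{\gamma+o(1)}$, $\gamma\in(\tfrac12,1)$; (ii) use monotonicity to squeeze $c_a(n)=n^{\beta+o(1)}$; (iii) invert.

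For step (i), take $n^{1/2+\eta}\le c_n\le n^{1-\eta}$, so $t:=\mu n/c_n\to\infty$ and $\sqrt{\mu n}/c_n\to0$, and write $\min(\tfrac{xy}{\mu n},1)=\tfrac{xy}{\mu n}-(\tfrac{xy}{\mu n}-1)_+$. On $\{xy>\mu n\}$ at least one of $x,y$ exceeds $\sqrt{\mu n}>t$, so the corresponding hub factor $\min(\tfrac{xc_n}{\mu n},1)$ equals $1$; keeping the other hub factor and using $\Exp{W\ind{W>s}}\sim\frac{\alpha}{\alpha-1}s\bar F(s)$ (Karamata), the contribution of $(\tfrac{xy}{\mu n}-1)_+$ to $g_n(c_n)$ is of smaller order, the ratio to the main term being $\lesssim\big(\sqrt{\mu n}\,\bar F(\sqrt{\mu n})\big)/\big(t\,\bar F(t)\big)\to0$ because $x\mapsto x\bar F(x)$ is regularly varying of index $1-\alpha<0$ and $\sqrt{\mu n}\gg t$. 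The main term factorises as $\frac{n}{2\mu}\big(\int_0^\infty x\min(\tfrac{xc_n}{\mu n},1)\,dF(x)\big)^2$; splitting the inner integral at $t$ and applying Karamata to $\Exp{W^2\ind{W\le t}}\sim\frac{\alpha}{2-\alpha}t^2\bar F(t)$ and to $\Exp{W\ind{W>t}}$ gives $\int_0^\infty x\min(\tfrac{xc_n}{\mu n},1)\,dF(x)\sim K\,t\,\bar F(t)$ with $K=\frac{\alpha}{2-\alpha}+\frac{\alpha}{\alpha-1}$, whence
\[
g_n(c_n)\sim\frac{K^2\mu}{2}\,\frac{n^3}{c_n^2}\,\bar F\!\Big(\tfrac{\mu n}{c_n}\Big)^{2}=\frac{K^2\mu^{1-2\alpha}}{2}\,n^{3-2\alpha}c_n^{2\alpha-2}L\!\Big(\tfrac{\mu n}{c_n}\Big)^{2},
\]
which is regularly varying in $c_n$ of index $2\alpha-2\in(0,2)$.

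For steps (ii)--(iii), applying the asymptotic of (i) to the sequences $c_n=n^{\beta\pm\eta}$ --- admissible since $\tfrac12<\beta<1$ for $\alpha\in(4/3,2)$ --- gives $g_n(n^{\beta\pm\eta})=n^{\,3-\frac32\alpha\pm\eta(2\alpha-2)+o(1)}$, so for each fixed small $\eta>0$ and $n$ large, $g_n(n^{\beta-\eta})<am_n<g_n(n^{\beta+\eta})$; by monotonicity of $g_n$, $n^{\beta-\eta}\le c_a(n)\le n^{\beta+\eta}$. Thus $c_a(n)$ lies in an admissible window, and plugging $c=c_a(n)$ into the asymptotic of (i) and equating with $am_n\sim Hn^3\bar F(\sqrt n)^3$ yields, with $T_n:=\mu n/c_a(n)\to\infty$,
\[
T_n\,\bar F(T_n)\sim\frac{\sqrt{2aH\mu}}{K}\;n\,\bar F(\sqrt n)^{3/2}.
\]
The right-hand side is regularly varying in $n$ of index $1-\tfrac34\alpha<0$, and $x\mapsto x\bar F(x)=x^{1-\alpha}L(x)$ is regularly varying of index $1-\alpha<0$, hence has an asymptotic inverse that is regularly varying of index $\tfrac1{1-\alpha}$. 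Composing, $T_n$ is regularly varying of index $\tfrac{1-3\alpha/4}{1-\alpha}=\tfrac{3\alpha-4}{4(\alpha-1)}$, so $c_a(n)=\mu n/T_n$ is regularly varying of index $1-\tfrac{3\alpha-4}{4(\alpha-1)}=\tfrac{\alpha}{4(\alpha-1)}=\beta$. Tracking the $a$-dependence, the constant above is proportional to $a^{1/2}$, which the inverse turns into a factor $a^{1/(2(1-\alpha))}$ in $T_n$ and hence $a^{-1/(2(1-\alpha))}=a^{\frac12\frac1{\alpha-1}}$ in $c_a(n)$; absorbing the remaining ($a$-free) slowly varying part into $L^*$ gives $c_a(n)\sim L^*(n)\,n^\beta a^{\frac12\frac1{\alpha-1}}$.

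The hard part will be the estimate in step (i): the ``two large neighbours'' part of $g_n(c)$ (the piece of the integral with $xy>\mu n$) must be shown negligible, and a careless bound on it diverges (it runs into $\Exp{W^\alpha}=\infty$); one must keep exactly the right truncations, using that one hub factor saturates on that region, and the correction is small precisely because $c\gg\sqrt n$. This confinement of the admissible hub sizes to $n^{1/2}\ll c\ll n$ is also what makes the picture specific to $\alpha>4/3$: for $\alpha<4/3$ one has $m_n\gg n\asymp g_n(\infty)$, so $c_a(n)=\infty$ and no single hub can suffice (the two-hub structure of Figure~\ref{fig:expdev} takes over), and $\alpha=4/3$ is the separately treated borderline.
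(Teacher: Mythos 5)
Your proof is correct and follows essentially the same route as the paper's: identify the asymptotics of the single-hub integral as $\mathrm{const}\cdot n^3 c^{-2}\bar F(n/c)^2$ (the content of the paper's Lemma~\ref{lemma-sb}, which you re-derive inline via the factorization $\min(xy/(\mu n),1)=xy/(\mu n)-(xy/(\mu n)-1)_+$ rather than the paper's region-splitting), and then invert the regularly varying map $y\mapsto y\bar F(y)$, whose asymptotic inverse is regularly varying of index $-1/(\alpha-1)$. Your monotone sandwich making the paper's step ``$c_a(n)\sim b(n)$'' explicit, and your remark that $\beta\in(1/2,1)$ precisely for $\alpha\in(4/3,2)$ (so that $c_a(n)$ is finite and the integral asymptotics apply), fill in details the paper leaves implicit.
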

The index $\beta$ equals $1$ at $\alpha=4/3$ and decreases in $\alpha$. $\beta$ equals $1/2$ when $\alpha=2$. One can show using extreme-value theory that the typical value of the largest weight in the random graph is regularly varying with index $1/\alpha$. For all $\alpha \in (1,2)$ we have $\beta > 1/\alpha$. A hub of size $c_a(n)$ is therefore a rare event, and our first main theorem confirms that it is the most likely rare event leading to $(1+a) m_n$ triangles when $\alpha>4/3$. 


\begin{theorem}  
\label{thm-triangle-singlebigjump}
Let $\alpha > 4/3$ and $a>0$. As $n\rightarrow\infty$, 
\begin{equation} 
\Prob{\triangle_n > (1+a) m_n} = (1+o(1)) n \Prob{ W > c_a(n)}. 
\end{equation} 
\end{theorem}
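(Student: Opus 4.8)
The plan is to establish matching upper and lower bounds, with the lower bound coming from the "single big jump" event and the upper bound coming from a careful decomposition of the triangle count according to the number of large-weight vertices.

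\emph{Lower bound.} First I would condition on the event that exactly one vertex, say vertex $1$, has weight $W_1 > c_a(n)$, while all other weights are moderate. Since $c_a(n)$ is regularly varying of index $\beta > 1/\alpha$ by Lemma \ref{lemma-can}, the probability that at least one of the $n$ weights exceeds $c_a(n)$ is $(1+o(1)) n \Prob{W > c_a(n)}$, and the probability of two or more such vertices is of smaller order (it is $O((n \Prob{W>c_a(n)})^2)$). On this event, the triangles through vertex $1$ have conditional expectation, given $W_1 = c$, equal to $n^2 \frac 12 \int\int f_n(x,y,c)\,dF(x)dF(y)$, which by the definition \eqref{definition-can} of $c_a(n)$ is at least $a m_n$ when $c \geq c_a(n)$; adding the $m_n(1+o(1))$ triangles not using vertex $1$ gives mean at least $(1+a)m_n$. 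To turn this into a high-probability statement I would show concentration of $\triangle_n$ around this conditional mean on the big-jump event: the triangles through vertex $1$ form a sum over pairs $\{x,y\}$ of indicator variables that, conditionally on $W_1$, are independent given the weights, and whose variance is controlled because the dominant contribution comes from pairs where $x,y$ have weights of order $\mu n / c$ — large enough that $f_n$ is bounded away from $0$ but the number of such pairs is large. A second-moment / Chebyshev argument (or a Bernstein-type bound, using the concentration tools the abstract alludes to) then gives that conditionally the triangle count exceeds $(1+a)m_n$ with probability $1 - o(1)$, yielding the lower bound $\Prob{\triangle_n > (1+a)m_n} \geq (1+o(1)) n \Prob{W > c_a(n)}$.

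\emph{Upper bound.} For the reverse inequality I would partition the probability space according to the vertex weights. Fix a slowly-growing threshold and split vertices into "hubs" (weight above some level $n^\gamma$ with $\gamma$ slightly below $\beta$) and "bulk" vertices. Decompose $\triangle_n = \triangle^{(0)} + \triangle^{(1)} + \triangle^{(\geq 2)}$, counting triangles with zero, exactly one, or at least two hub vertices. The key claims are: (i) $\triangle^{(0)}$, the triangles among bulk vertices, concentrates tightly around its mean which is $(1+o(1))m_n$ (this is where the tailored empirical-process concentration inequalities enter, since $\triangle^{(0)}$ is a nonlinear functional of heavy-tailed weights truncated at $n^\gamma$), so it cannot alone produce an excess of $a m_n$; (ii) $\triangle^{(\geq 2)}$ is negligible with overwhelming probability when $\alpha > 4/3$, because two hubs of the required size is already a rarer event than one hub of size $c_a(n)$ — this is precisely the content of the phase transition and should follow from comparing $(n\Prob{W>c_a(n)})^2$-type quantities against $n\Prob{W>c_a(n)}$ using $\beta>1/\alpha$; (iii) the excess must therefore come from $\triangle^{(1)}$, i.e.\ from a single hub, and a hub of weight $c$ contributes on the order of $n^2 \int\int f_n(x,y,c)\,dF(x)dF(y)$ triangles, which exceeds $a m_n$ only when $c \gtrsim c_a(n)$. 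Summing $\Prob{W_i > c}$ over the (at most $n$) candidate hubs and optimizing the threshold $c$ then gives $\Prob{\triangle_n > (1+a)m_n} \leq (1+o(1)) n \Prob{W > c_a(n)}$.

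\emph{Main obstacle.} I expect the hard part to be step (i) of the upper bound: showing that the bulk triangle count $\triangle^{(0)}$ does not itself fluctuate by a factor $a$. The summands are products of three truncated heavy-tailed weights divided by $(\mu n)$, and even after truncation at $n^\gamma$ the relevant variances are dominated by rare large values, so standard McDiarmid or Efron--Stein bounds are too lossy. This is exactly why one needs the refined Bernstein/Bousquet-type concentration inequality for empirical processes with heavy-tailed, nonlinearly-transformed data that the paper advertises; calibrating the truncation level $n^\gamma$ so that simultaneously $\triangle^{(0)}$ concentrates, $\triangle^{(\geq 2)}$ is negligible, and no contribution is lost in the regime $c \asymp c_a(n)$ will be the delicate bookkeeping at the heart of the proof.
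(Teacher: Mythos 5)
Your outline follows the paper's strategy in substance: a single-big-jump lower bound obtained by planting one hub and proving concentration, and an upper bound obtained by showing that without a hub of size comparable to $c_a(n)$ the excess cannot occur, while two or more such hubs are too rare. The one genuinely different organizational choice is that you decompose $\triangle_n$ itself into $\triangle^{(0)}+\triangle^{(1)}+\triangle^{(\geq 2)}$, so that every piece still carries both the edge randomness and the weight randomness. The paper instead factors the two sources of randomness once and for all: Lemma \ref{lem:expconditionedtausmallnew} (via Chatterjee's concentration inequality, after first checking that low-weight edges lie in few triangles) shows $\Prob{\triangle_n\geq(1+\zeta)G_n}\leq e^{-n^{\varepsilon}}$, and then the entire hub analysis is carried out on $G_n=\Exp{\triangle_n\mid W_1,\dots,W_n}$, which is a deterministic integral against the empirical distribution $F_n$ (Proposition \ref{prop-onebigjumpneeded}, with the threshold taken as $\varepsilon c_a(n)$ rather than a power $n^{\gamma}$, and Proposition \ref{prop-wlln}). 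This factorization is what makes your ``main obstacle'' (i) tractable: the empirical-process bound of Proposition \ref{prop-empiricalconcentration} only ever has to control $F_n$, not a triangle count. If you insist on working with $\triangle_n$ directly you will end up reproving a conditional version of Chatterjee's bound inside each of your three pieces, which is strictly more work for no gain.

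There is one concrete gap in your lower bound as written. You condition on $W_1=c\geq c_a(n)$ and claim the conditional mean is ``at least $(1+a)m_n$'', so that concentration gives $\triangle_n>(1+a)m_n$ with probability $1-o(1)$. But for $c$ near the boundary $c_a(n)$ the conditional mean is only $(1+a)m_n(1+o(1))$, possibly from below, and concentration around the mean then says nothing about exceeding the level $(1+a)m_n$. The paper avoids this by planting a hub of size $c_{a+\zeta}(n)$ with $\zeta>0$, so that the conditional mean is $(1+a+\zeta)m_n(1+o(1))$, strictly above the target; Lemma \ref{lem-lb-additionaltriangles} then gives the weak law, and one lets $\zeta\downarrow 0$ at the end using the continuity of $a\mapsto c_a(n)$ from Lemma \ref{lemma-can} (and, symmetrically, the upper bound compares $\Prob{G_n>(1+(a-\zeta)(1-\zeta))m_n}$ with $\Prob{G_n>(1+a)m_n}$). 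The same $\zeta$-room is needed in your upper-bound step (iii), where ``exceeds $am_n$ only when $c\gtrsim c_a(n)$'' must be made quantitative via the $0$--$1$ dichotomy of Proposition \ref{prop-wlln} at $y<1$ versus $y>1$. These are standard repairs, but without them the argument does not close.
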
 


By applying a concentration result of  Chatterjee~\cite{chatterjee2012missing}, we show in Section \ref{sec:prooftaularge} that, to establish the behavior of  $\Prob{\triangle_n > (1+a)m_n}$, it suffices to establish the behavior of $\Prob{G_n > (1+a)m_n}$, with 
\begin{equation}
\label{eq:def:gn}
    G_n = \Exp{\triangle_n \mid W_1,...,W_n}.
\end{equation}
To analyze $\Prob{G_n > (1+a)m_n}$, we extend ideas from heavy-tailed large deviations theory (see e.g.\ 
\cite{Collamore2018,  fosskorshunov2012,  Mikosch2013,  rheeblanchetzwart2016,  zwart2004})
 and formalize the intuition that a single big hub is needed. A major step
is to show that the event $\{G_n > (1+a) m_n\}$ is much more unlikely when all nodes have weight smaller than $\varepsilon c_a(n)$ for some suitable $\varepsilon>0$. 
However, existing heavy-tailed large deviation tools focus on essentially linear processes and are less suitable to apply to functionals of random graphs which are essentially
nonlinear as in (\ref{eq:def:gn}).


For this reason we develop a different approach: we write the number of triangles as a functional of the empirical distributions of the weights as in (\ref{definition-gn}),
and we derive a novel concentration result (Proposition \ref{prop-empiricalconcentration}), building on a classical concentration result for weighted empirical processes \cite{Wellner1978}.
For a more precise statement we refer to (\ref{eq-deffnstar}). This approach seems promising for
other nonlinear functionals of heavy-tailed random variables, like $U$-statistics, or other observables of random graphs.
Examples of recent work on nonlinear large deviations in a light-tailed setting are \cite{Augeri2020,   ChatterjeeDembo2016}.


\paragraph*{The case $\alpha<4/3$: many hubs} For $\alpha<4/3$, the probability of a larger than average number of triangles is semi-exponential instead:
\begin{theorem}  
\label{thm:smalldevs}
Suppose that $\Prob{W\geq 1}=1$ and $\Prob{W>x} \sim C x^{-\alpha}$ for $x\geq 1$, $\alpha\in (1,4/3)$. For any fixed $a>0$,
\begin{align}
  \lim_{n\to\infty}\frac {\log  \Prob{\triangle_n>n^{(3-\alpha 3/2)}(C^3H+a)}}{n^{1-\alpha 3/4}\log(n)}
   =-\sqrt{2 a}\frac{\alpha}{4}.
\end{align}
\end{theorem}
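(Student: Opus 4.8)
The plan is to establish the two matching bounds
\[
\limsup_{n\to\infty}\frac{\log\Prob{\triangle_n>(C^3H+a)n^{3-3\alpha/2}}}{n^{1-3\alpha/4}\log n}\le-\sqrt{2a}\,\frac\alpha4\le\liminf_{n\to\infty}\frac{\log\Prob{\triangle_n>(C^3H+a)n^{3-3\alpha/2}}}{n^{1-3\alpha/4}\log n}.
\]
By Lemma~\ref{lemma-mean} and $\bar F(x)\sim Cx^{-\alpha}$ we have $m_n\sim C^3Hn^{3-3\alpha/2}$, so the target event asks $\triangle_n$ to exceed its mean by $(1+o(1))a\,n^{3-3\alpha/2}$. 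The heuristic behind the constant is that the cheapest way to do this is to plant $k_n$ \emph{hubs} of weight of order $n$: any two weights of order $n$ are connected with probability one, and --- since all weights are at least $1$ and $\mu=\Exp W<\infty$ --- a vertex of weight $\mu n$ is joined to every other vertex with probability one, so $k$ such hubs automatically create $\binom k2(n-k)\sim\tfrac12k^2n$ new triangles; equating this with $a\,n^{3-3\alpha/2}$ gives $k_n\sim\sqrt{2a}\,n^{1-3\alpha/4}$. The probability of having $k_n$ such hubs is $\binom{n}{k_n}\bar F(\mu n)^{k_n}(1+o(1))$, and since $\log\binom{n}{k_n}=k_n\log n-k_n\log k_n+O(k_n)$, $\log\bar F(\mu n)=-\alpha\log n+O(1)$ and $\log k_n=(1-3\alpha/4+o(1))\log n$, its logarithm equals $k_n\log n\big[(1-\alpha)-(1-3\alpha/4)\big]+O(k_n)=-\tfrac\alpha4 k_n\log n+O(k_n)$, which is $-(1+o(1))\sqrt{2a}\,\tfrac\alpha4\,n^{1-3\alpha/4}\log n$. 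As a preliminary step for both directions I would, as in the proof of Theorem~\ref{thm-triangle-singlebigjump}, use Chatterjee's concentration inequality for triangle counts (conditionally on the weights the edges are independent and $\triangle_n$ is a degree-three polynomial in the edge indicators) to replace $\triangle_n$ by $G_n=\Exp{\triangle_n\mid W_1,\dots,W_n}$, the error probability being negligible at speed $n^{1-3\alpha/4}\log n$; it then suffices to estimate $\Prob{G_n>(C^3H+a\pm\varepsilon)n^{3-3\alpha/2}}$ for all small $\varepsilon>0$.

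For the lower bound, fix $\eta>0$, set $k=\lceil(1+\eta)\sqrt{2a}\,n^{1-3\alpha/4}\rceil$ so that $\binom k2(n-k)\ge(1+\eta)a\,n^{3-3\alpha/2}$ eventually, and condition on the event that a fixed set $S$ of $k$ vertices all have weight at least $\mu n$. On this event every one of the $\binom k2(n-k)$ triangles with exactly two vertices in $S$ is present almost surely, while the number of triangles among the remaining $n-k$ vertices is independent of the weights on $S$, has mean $(1-o(1))m_n$, and concentrates around it by the law-of-large-numbers behaviour $\triangle_n/m_n\to1$ in probability applied to the $(n-k)$-vertex restriction. Hence $\triangle_n>(C^3H+a)n^{3-3\alpha/2}$ occurs on this event with probability $1-o(1)$; summing over the $\binom nk$ choices of $S$ and using $\bar F(\mu n)\sim C(\mu n)^{-\alpha}$ and $(1-\bar F(\mu n))^{n}\to1$ (valid since $\alpha>1$) gives $\Prob{\triangle_n>(C^3H+a)n^{3-3\alpha/2}}\ge(1-o(1))\binom nk\bar F(\mu n)^k$, so the computation above yields $\liminf_n\log\Prob{\cdots}/(n^{1-3\alpha/4}\log n)\ge-(1+\eta)\sqrt{2a}\,\alpha/4$. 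Letting $\eta\downarrow0$ finishes this direction.

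For the upper bound, fix small $\varepsilon>0$, call a vertex a \emph{hub} if its weight exceeds $\varepsilon n$, write $N$ for the number of hubs, and decompose $G_n=G_n^{(0)}+G_n^{(1)}+G_n^{(2)}+G_n^{(3)}$ according to the number of hubs among the three vertices of a triple. Since $f_n\le1$ always, $G_n^{(2)}\le\binom N2(n-N)\le\binom N2 n$ and $G_n^{(3)}\le\binom N3$, so on $\{N\le Mn^{1-3\alpha/4}\}$ with $M$ a large constant we get $G_n^{(3)}=o(n^{3-3\alpha/2})$; similarly $G_n^{(1)}\le N\sum_{\{j,l\}\text{ non-hubs}}\min(W_jW_l/(\mu n),1)$, and a multi-scale truncation of the non-hub weights together with Bernstein/Chernoff bounds for binomial counts shows this is $o(n^{3-3\alpha/2})$ on $\{N\le Mn^{1-3\alpha/4}\}$ up to an event of probability $\exp(-\omega(n^{1-3\alpha/4}\log n))$. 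The key term is the truncated bulk $G_n^{(0)}$, a degree-three functional of the empirical distribution of the weights capped at $\varepsilon n$; its mean is $(1+o(1))m_n$, and Proposition~\ref{prop-empiricalconcentration} (via the representation of $G_n$ as a functional of the empirical distribution and the weighted-empirical-process estimate of \cite{Wellner1978}) gives $\Prob{G_n^{(0)}>(1+o(1))m_n}\le\exp(-\omega(n^{1-3\alpha/4}\log n))$. Consequently, up to events of probability $\exp(-\omega(n^{1-3\alpha/4}\log n))$, the event $\{G_n>(C^3H+a-\varepsilon)n^{3-3\alpha/2}\}$ forces $G_n^{(2)}\ge(a-2\varepsilon)n^{3-3\alpha/2}$, hence $\binom N2 n\ge(a-2\varepsilon)n^{3-3\alpha/2}$, hence $N\ge(1-o(1))\sqrt{2(a-2\varepsilon)}\,n^{1-3\alpha/4}$. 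Finally $\Prob{N\ge k'}\le2\binom n{k'}\bar F(\varepsilon n)^{k'}$ with $k'=(1-\delta)\sqrt{2(a-2\varepsilon)}\,n^{1-3\alpha/4}$, and the Stirling computation above gives $\log\Prob{N\ge k'}\le-(1+o(1))\tfrac\alpha4k'\log n$; letting $\varepsilon,\delta\downarrow0$ yields $\limsup_n\log\Prob{\cdots}/(n^{1-3\alpha/4}\log n)\le-\sqrt{2a}\,\alpha/4$.

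The principal obstacle is the sharp control of the bulk term $G_n^{(0)}$, i.e.\ Proposition~\ref{prop-empiricalconcentration}: even after truncation at $\varepsilon n$ the functional is genuinely nonlinear and still feels the heavy tail up to that level, so neither a variance estimate nor a linearization suffices --- one needs a concentration inequality for the weighted empirical process that beats the slow speed $n^{1-3\alpha/4}\log n$. A secondary, more technical difficulty is the bookkeeping: the hub threshold $\varepsilon n$, the auxiliary truncation scales used to bound $G_n^{(1)}$, and the exponents $\delta,M$ must be chosen consistently so that \emph{every} discarded event ($G_n^{(1)}$ or $G_n^{(3)}$ too large, $N>Mn^{1-3\alpha/4}$, $G_n^{(0)}$ anomalous, the Chatterjee error) has probability $\exp(-\omega(n^{1-3\alpha/4}\log n))$ while the surviving event $\{N\ge k'\}$ is calibrated to produce exactly the constant $\sqrt{2a}\,\alpha/4$.
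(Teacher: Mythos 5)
Your lower bound is essentially the paper's: plant $k_n\sim\sqrt{2a}\,n^{1-3\alpha/4}$ vertices of weight $\ge\mu n$ (which, since $W\ge1$ a.s., connect to everything with probability one), pay $\binom{n}{k_n}\bar F(\mu n)^{k_n}$, and let the bulk supply $C^3Hn^{3-3\alpha/2}$ triangles; that part is fine. The upper bound, however, has a genuine gap exactly at the point you flag as ``the principal obstacle'' but then treat as settled. You claim that Proposition~\ref{prop-empiricalconcentration} yields $\Prob{G_n^{(0)}>(1+o(1))m_n}\le\exp(-\omega(n^{1-3\alpha/4}\log n))$ for the bulk truncated at a single threshold $\varepsilon n$, and similarly that $G_n^{(1)}$ is controlled at that speed. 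It does not: the third regime of that proposition (the bound $\sup_{x\ge b(n)}\bar F_n(x)\le c/n$ with $b(n)=\bar F^{-1}(1/n)n^{\delta}$) fails with probability that is only \emph{polynomially} small in $n$ for any constant $c$, and even the Wellner part $e^{-n\bar F(a(n))h(A+1)}$ decays like $\exp(-n^{\alpha\delta})$, which for $\alpha$ close to $1$ cannot be made to beat $n^{1-3\alpha/4}\log n$ within the admissible range $\delta<1-1/\alpha$. In the regime $\alpha>4/3$ of Theorem~\ref{thm-gn} polynomial error bounds suffice because the target probability is itself polynomial; here the target is $\exp(-cn^{1-3\alpha/4}\log n)$, far below any polynomial, so every discarded event must be controlled at stretched-exponential speed, and your single-threshold decomposition simply does not provide that. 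The same problem recurs in your treatment of $G_n^{(1)}$, which implicitly needs $\sum_j W_j\mathbf{1}\{W_j\le\varepsilon n\}\le 2\mu n$: a bound of type (\ref{rs99}) is again only polynomially reliable.

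This is precisely why the paper's proof of the upper bound is structured differently. It does not pass through $G_n$ at all but splits $\triangle_n$ by vertex classes $A$, $B_1$, $B_2$, $B_3$, where the bulk truncation level is not $\varepsilon n$ but $Q_n= D^{1/\alpha}n^{3/4}\bigl(\log n^{\alpha/4}\bigr)^{-1/\alpha}$ with $D=\sqrt{2a}/h(1+\varepsilon)$, calibrated so that the Wellner exponent $n\bar F(Q_n)h(1+\varepsilon)$ equals exactly $\sqrt{2a}\,n^{1-3\alpha/4}\log(n^{\alpha/4})$ (Lemma~\ref{lem:b3triangedges}); the intermediate class $B_2$ (weights between $Q_n$ and $n/\log(n)^{\zeta/\alpha}$), which your scheme buries inside $G_n^{(0)}$ and $G_n^{(1)}$, is handled by a separate argument that uses monotonicity to replace all $B_2$ weights by their maximum and then applies Chatterjee's inequality together with a bound on $N(B_2)$ that itself only holds up to probability comparable to the target rate (Lemma~\ref{lem:b2edgetriang} and~\eqref{eq:Nb2detailed}). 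To repair your proof you would need to reproduce this multi-scale calibration; the quantities $\varepsilon n$, $a(n)$, $b(n)$ and a constant $c$ in Proposition~\ref{prop-empiricalconcentration} cannot be chosen to make all the discarded events simultaneously $\exp(-\omega(n^{1-3\alpha/4}\log n))$-unlikely. (Your preliminary reduction from $\triangle_n$ to $G_n$ via Chatterjee is, by contrast, harmless: the error there is $e^{-cn^{1-\alpha/2}}$ and $1-\alpha/2>1-3\alpha/4$.)
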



In this setting, a large number of triangles is caused by a sublinear (but polynomially growing in $n$) number of vertices of weight $\mu n$. If $\Prob{W\geq 1}=1$, a vertex of weight $\mu n$ connects to all other vertices with probability one. Thus, in pairs of two, these hubs form triangles with all other vertices, see Figure~\ref{fig:expdev}. 

The square root of $a$ can intuitively be explained by the fact that $B$ vertices of weight $\mu n$ create $nB(B-1)/2\approx nB^2/2$ triangles. Indeed, each of the $B^2/2$ pairs of vertices of weight at least $\mu n$ creates triangles with each of the $n$ other vertices. Thus, with $B=\sqrt{2 a} n^{1-\alpha 3/4}$ create at least $an^{(3-\alpha 3/2)}$ triangles.

To prove an asymptotic upper bound, we will split all triples of nodes into three sets based on the triangle weights
and then construct an upper bound on the triangle counts with edges in these weight groups one by one. Here we will use similar concentration bounds as developed in the proof of Theorem \ref{thm-triangle-singlebigjump} to get rid of the random weights, as long as these weights are sufficiently small. For triangles containing larger weights, we will use properties of the function $f_n$ instead to deal with the multiple sources of randomness.

The phenomenon of having a number of big hubs that is growing with $n$ is non-standard in the context of heavy tails; a related example appears in an exit problem for the sample average of a random walk, where the number of big values required to avoid escaping a convex set for $n$ time
units is logarithmically increasing with $n$ \cite{ayan}.

We believe that it is possible to extend Theorem~\ref{thm:smalldevs} to allow for non-trivial slowly varying functions.
In this setting, the denominator of the scaling will likely also include a term with $L(n)$, the slowly varying function evaluated at $n$, due to the fact that the main contribution is from vertices of weight $n$. However, in the current proof we distinguish different types of triangles at different scales of the weights. Proving such a statement with slowly varying functions then entails showing that the contribution of the slowly varying functions at other scales that will appear in the probabilities of these non-dominating triangles are small compared to the contribution of $L(n)$, which becomes rather technical, especially when $L(n)$ oscillates.

\paragraph*{Behavior at the boundary $\alpha=4/3$: multiple hubs} The above two theorems show a stark contrast in the way additional triangles are generated: if $\alpha>4/3$ they consist of two regular nodes and one hub of the order of~\eqref{definition-can}, and if $\alpha<4/3$, they consist one regular node, and two hubs of the order of~\eqref{definition-can}. At the boundary $\alpha=4/3$ both may occur.
%
When $\alpha=4/3$,  $c_a(n)$ defined in (\ref{definition-can}) is regularly varying of index $1$, as is $m_n$. 
To avoid technical complications with slowly varying functions that can arise on the boundary (for example, $m_n/n$ could be oscillating between 0 and $\infty$), we assume that $\Prob{W>n} = (1+o(1))C n^{-\alpha}$, in which case Lemma \ref{lemma-mean} $m_n \sim C^3Hn$.

Depending on the value of $a$, a single big value of the weights $W_i$ may not be enough to create $(1+a)m_n$ triangles, for which we now provide some intuition. If there are $l$ hubs with a weight of infinite 
size and $n$ regular nodes, each hub forms a triangle
with any of the $\mu n$ edges,
leading to $nl\mu/2$ additional triangles consisting of a single hub and two regular nodes. In addition, each of the $l(l-1)/2$ pairs of hubs form $n$ triangles with the regular nodes. Therefore, if 
we wish to exceed the number of triangles with a factor $a m_n$ we need $k(a)$  hubs where $k(a)$ is defined as
\begin{equation}
\label{eq-def-k(a)}
    k(a) := \inf \{l: l \mu/2 + l(l-1)/2 > a C^3 H \}.
\end{equation}
To derive a precise result, we need to take into account that hubs have weight of $O(n)$ rather than $\infty$. To this end, we define
\begin{align}
    K_l(z_1,...,z_l) & = \frac 1{2\mu} \sum_{i=1}^l (\frac{z_i}{\mu}\Exp{W^2I(W\leq \mu/z_i)}+\Exp{WI(W>\mu/z_i}))^2 \nonumber\\
    & \quad + 
    \sum_{i=1, j>i}^l \Exp{\min \{\frac{z_i}{\mu}W,1\}, \min \{\frac{z_j}{\mu}W,1\}},
\end{align}
where $I$ denotes the indicator function. 
As shown in Lemma \ref{lem-lb-additionaltriangles} below, (see also Proposition \ref{prop-hublimit}), $nK_l(z_1,...,z_l)$ can be interpreted as the expected number of additional triangles (up to a term of $o(n)$) caused by $l$ hubs of size $z_i n, i=1,...,l$. 

We can now formulate our main theorem for $\alpha=4/3$. Let, for $b>0$, $X_i^b, i\geq 1$, be an i.i.d.\ sequence such that $\prob(X_i^b>x) = (x/b)^{-\alpha}, x \geq b$.
Define $\eta(a)$ as 
\begin{equation}
\label{def:eta(a)}
    \eta(a) = \inf \{ \eta: K_{k(a)}(\eta, \infty, ...,\infty) \geq  C^3Ha\}.
\end{equation}
Note that 
$\eta(a)>0$ if $K_{k(a)-1}(\infty, ...,\infty)  = (k(a)-1) \mu + (k(a)-1)(k(a)-2)/2 <  a C^3 H$. 
\begin{theorem}
\label{thm-triangleboundary}
    Suppose that $P(W>x)\sim C x^{-4/3}$,
and suppose that  $(k(a)-1) \mu + (k(a)-1)(k(a)-2)/2 <  a C^3 H$.
     Then 
     \begin{equation}
   \Prob{\triangle_n > (1+a) m_n} \sim  
   \prob( K_{k(a)}(X_1^{\eta(a)}, \ldots, X_{k(a)}^{\eta(a)}) \geq C^3 H a) (n\Prob{W> \eta(a) n})^{k(a)}.
\end{equation}

\end{theorem}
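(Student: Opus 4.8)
## Proof proposal for Theorem \ref{thm-triangleboundary}

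The plan is to combine the single-hub analysis underlying Theorem \ref{thm-triangle-singlebigjump} with a $k(a)$-fold ``big-jump'' principle adapted to the boundary case $\alpha = 4/3$. As in the case $\alpha > 4/3$, I would first reduce from $\triangle_n$ to $G_n = \Exp{\triangle_n \mid W_1, \ldots, W_n}$ via the Chatterjee concentration result cited after Theorem \ref{thm-triangle-singlebigjump}, so that it suffices to prove the asymptotics for $\Prob{G_n > (1+a) m_n}$. The heuristic to formalize is that, since $k(a)-1$ hubs of weight of order $n$ are \emph{not} enough (by the hypothesis $(k(a)-1)\mu + (k(a)-1)(k(a)-2)/2 < a C^3 H$) while $k(a)$ such hubs \emph{are}, the dominant contribution comes from exactly $k(a)$ weights being of order $n$ — specifically of order at least $\eta(a) n$ up to the slowly-varying-free scaling afforded by the assumption $\Prob{W > x} \sim C x^{-4/3}$ — and all remaining weights behaving typically.

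\textbf{Upper bound.} I would sort the weights $W_{(1)} \geq W_{(2)} \geq \cdots \geq W_{(n)}$ and condition on the number $j$ of weights exceeding $\delta n$ for a small $\delta > 0$. For $j \leq k(a)-1$: here one shows $G_n \leq (1+a)m_n$ is very unlikely. The contribution of the $j$ large weights to $G_n$ is controlled by $K_j$ evaluated at their rescaled values, plus a term from triangles among small weights; the latter concentrates around its mean $\approx C^3 H n$ (by the empirical-process concentration tool, Proposition \ref{prop-empiricalconcentration}, used as in Theorem \ref{thm-triangle-singlebigjump} applied to weights below $\delta n$), so the large weights must supply at least $(a - o(1)) m_n$ triangles, which with only $j \leq k(a)-1$ of them and each of weight $O(n)$ is impossible once $\delta$ is small — the failure of $(k(a)-1)\mu + (k(a)-1)(k(a)-2)/2 < aC^3H$ to hold for $j$ hubs makes this probability $o((n\Prob{W > \eta(a) n})^{k(a)})$. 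For $j \geq k(a)+1$: this costs $(n\Prob{W>\delta n})^{k(a)+1}$, which is negligible on the polynomial scale since $\Prob{W>\delta n}$ is regularly varying of index $-4/3 < -1$. For $j = k(a)$ exactly: write $G_n$ in terms of the $k(a)$ rescaled large weights $W_{(i)}/n$ and the concentrated small-weight contribution ($\to C^3 H n$), so that $\{G_n > (1+a)m_n\}$ becomes, up to $o(1)$ corrections in the weights, the event $\{K_{k(a)}(W_{(1)}/n, \ldots, W_{(k(a))}/n) \geq C^3 H a\}$. Since $\eta(a)$ is the threshold below which $(k(a)-1)\mu + K_1$ cannot reach $C^3H(1+a)$, on this event all $k(a)$ large weights must in fact exceed $(\eta(a)-o(1))n$; a standard multivariate big-jump estimate — the joint density of $(W_{(1)}, \ldots, W_{(k(a))})$ is asymptotically that of $k(a)$ i.i.d.\ copies of the Pareto tail, contributing the factor $\binom{n}{k(a)} \sim n^{k(a)}/k(a)!$ together with the product of tails — then yields the claimed $\prob(K_{k(a)}(X_1^{\eta(a)}, \ldots, X_{k(a)}^{\eta(a)}) \geq C^3 H a) (n\Prob{W>\eta(a)n})^{k(a)}$, after letting $\delta \downarrow 0$ so that the cutoff $\delta n$ can be replaced by the sharp $\eta(a) n$.

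\textbf{Lower bound.} For the matching lower bound I would restrict to the event that exactly $k(a)$ weights exceed $\eta(a) n$, with rescaled values $(x_1, \ldots, x_{k(a)})$ in the (open) set $\{K_{k(a)} > C^3 H a\}$, that the remaining weights contribute at least $(C^3 H - \varepsilon)n$ triangles to $G_n$ (a typical event, probability $\to 1$), and sum the resulting lower bound for $G_n$; continuity of $K_{k(a)}$ and the definition of $\eta(a)$ ensure this event forces $G_n > (1+a)m_n$ for $\varepsilon$ small, and the big-jump lower estimate matches the upper one. Since we are working with $G_n$ rather than $\triangle_n$, the reduction from $\triangle_n$ has to be two-sided, which is exactly what Chatterjee-type concentration provides at this polynomial scale.

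\textbf{Main obstacle.} The hardest part is the case $j = k(a)$ in the upper bound: one must show that the non-dominating triangles — those with one or zero of their vertices among the $k(a)$ hubs, whose counts are genuinely random and whose weights range over all scales between $O(1)$ and $\delta n$ — concentrate tightly enough that their fluctuations do not contribute an extra polynomial factor, and simultaneously that the cutoff $\delta n$ can be sharpened to $\eta(a) n$ without loss. This is precisely where Proposition \ref{prop-empiricalconcentration} must be invoked with care, uniformly over the location of the $k(a)$ conditioned hubs, and where the absence of a genuine slowly varying function (the hypothesis $\Prob{W>x} \sim Cx^{-4/3}$) is essential to avoid oscillation in the constant $C^3 H$ entering the threshold for $K_{k(a)}$.
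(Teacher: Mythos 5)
Your proposal follows essentially the same route as the paper: reduce to $G_n$ via Chatterjee's concentration, split on the number of weights exceeding $\varepsilon n$ (fewer than $k(a)$ hubs is impossible by the concentration bound for small weights plus the $O(n)$ cap on each hub's contribution, more than $k(a)$ is negligible by a binomial tail bound), identify the $j=k(a)$ contribution with $\Prob{K_{k(a)}(\cdot)\geq C^3Ha}$ via a conditional law of large numbers for the hub integrals, and sharpen the cutoff to $\eta(a)n$ by regular variation. The only small inaccuracy is that the Chatterjee bound is one-sided (upper tail of $\triangle_n$ given $G_n$); the paper handles the lower-bound transfer from $G_n$ to $\triangle_n$ by a separate Chebyshev/second-moment argument (Lemma \ref{lem-lb-additionaltriangles}) rather than a two-sided concentration inequality, but this is exactly the kind of conditional LLN your sketch implicitly requires.
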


We prove Theorem \ref{thm-triangleboundary} along similar lines as Theorem \ref{thm-triangle-singlebigjump}, namely by first showing the analogous result for $G_n$. The technical condition $(k(a)-1) \mu + (k(a)-1)(k(a)-2)/2 <  a C^3 H$ is needed to be able to pin down the number of hubs that is required for $G_n$ and $\triangle_n$ to be large.

\paragraph*{Larger deviations}
We now show that  larger deviations of an order of magnitude $n^\theta$ from the mean again induce a phase transition. 
First, the probability that a factor of $n^\theta$ more triangles than average are present decays regularly varying in $n$ up until $\theta=(3/2)\alpha-2$:
\begin{theorem}\label{thm:thetasmall}
Let $\alpha>4/3$ and $\theta \in (0, \frac 32 \alpha-2)$. Then $c_{n^\theta}(n) \sim L^*(n) n^{\beta+\frac\theta 2 \frac{1}{\alpha-1}}=o(n)$ 
for some slowly varying $L^*$
and 
\begin{equation}
    \Prob{\triangle_n> m_n (1+n^\theta)} \sim n\Prob{W> c_{n^\theta}(n)}.
\end{equation}

In particular, $    \Prob{\triangle_n> m_n (1+n^\theta)}$ is regularly varying with exponent $1-\alpha \beta - \alpha \frac\theta 2 \frac{1}{\alpha-1}$.
\end{theorem}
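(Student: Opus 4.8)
The plan is to follow the same two-stage strategy used for Theorem \ref{thm-triangle-singlebigjump}, but now track the dependence on the growing deviation $n^\theta$ carefully. First I would establish the claimed asymptotics for $c_{n^\theta}(n)$: by Lemma \ref{lemma-can} we know $c_a(n)$ is regularly varying of index $\beta=\frac14\frac{\alpha}{\alpha-1}$ in $n$ and of index $\frac12\frac1{\alpha-1}$ in $a$, with the slowly varying prefactor $L^*$ being essentially a power of $L$ composed with a power; substituting $a=n^\theta$ and invoking the uniform convergence theorem for regularly varying functions (so that the $a$-dependence can be absorbed into the $n$-scaling) gives $c_{n^\theta}(n)\sim L^*(n)\,n^{\beta+\frac{\theta}{2}\frac1{\alpha-1}}$. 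One then checks that the exponent $\beta+\frac{\theta}{2(\alpha-1)}$ is strictly less than $1$ precisely when $\theta<\frac32\alpha-2$: indeed $\beta+\frac{\theta}{2(\alpha-1)}<1 \iff \theta< 2(\alpha-1)(1-\beta)=2(\alpha-1)-\frac{\alpha}{2}=\frac32\alpha-2$. So in the stated range a hub of size $c_{n^\theta}(n)$ is still $o(n)$, which is what makes the single-hub picture survive and keeps the relevant $\min$'s in $f_n$ from truncating at $1$.

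Next I would reduce to $G_n=\Exp{\triangle_n\mid W_1,\dots,W_n}$ exactly as in Section \ref{sec:prooftaularge}, applying Chatterjee's concentration inequality \cite{chatterjee2012missing}: since the target threshold $m_n(1+n^\theta)$ still grows only polynomially and the tail probability $n\Prob{W>c_{n^\theta}(n)}$ is regularly varying (hence not too small), the fluctuations of $\triangle_n$ around $G_n$ are negligible on this scale, so it suffices to prove $\Prob{G_n>(1+n^\theta)m_n}\sim n\Prob{W>c_{n^\theta}(n)}$. For the lower bound, I would condition on the largest weight $W_{(1)}$ exceeding $c_{n^\theta}(n)$ (which happens with probability $(1+o(1))n\Prob{W>c_{n^\theta}(n)}$ by standard extreme-value asymptotics for i.i.d. regularly varying samples, using that the event is rare), and show that on this event the conditional expected triangle count already exceeds $(1+n^\theta)m_n$ with probability tending to $1$ — this uses a law-of-large-numbers / concentration statement for $\sum_{i<j}f_n(W_i,W_j,W_{(1)})$ around its mean, which by the definition \eqref{definition-can} of $c_a(n)$ is at least $m_n n^\theta$, plus the base contribution $m_n$ from all other triples. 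The concentration here is where I would re-use the empirical-process bound (Proposition \ref{prop-empiricalconcentration}), since the summands involve heavy-tailed $W_i$ in a nonlinear way.

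For the matching upper bound I would run the "single big jump" dichotomy: split according to whether the second-largest weight is also unusually large. The main work is to show that the event $\{G_n>(1+n^\theta)m_n\}$ restricted to all weights being below $\varepsilon c_{n^\theta}(n)$ is of smaller order than $n\Prob{W>c_{n^\theta}(n)}$; this is the heavy-tailed large-deviation heart of the argument, and it is exactly where the novel concentration result is deployed — one writes $G_n$ as a functional of the weighted empirical distribution as in \eqref{definition-gn}, truncates at $\varepsilon c_{n^\theta}(n)$, and uses Proposition \ref{prop-empiricalconcentration} to get a sub-Gaussian-type bound whose exponent, evaluated at deviation $n^\theta m_n$ with truncation level $\varepsilon c_{n^\theta}(n)$, beats the polynomial $n\Prob{W>c_{n^\theta}(n)}$ for $\varepsilon$ small; the contribution of two or more weights above $\varepsilon c_{n^\theta}(n)$ is $O((n\Prob{W>\varepsilon c_{n^\theta}(n)})^2)=o(n\Prob{W>c_{n^\theta}(n)})$ because $\Prob{W>\varepsilon c_{n^\theta}(n)}\to0$, and on the remaining event (exactly one large weight, of size $h$) a variational computation over $h$ shows the cheapest way to reach the threshold is $h\approx c_{n^\theta}(n)$, recovering the constant $1$ in front. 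The final regular-variation index follows by inserting $\bar F(x)=x^{-\alpha}L(x)$ into $n\Prob{W>c_{n^\theta}(n)}$ and collecting exponents: $1-\alpha(\beta+\frac\theta2\frac1{\alpha-1})$. The main obstacle I anticipate is making the concentration bound in the "no big jump" regime quantitatively strong enough \emph{uniformly} as $\theta$ ranges over $(0,\frac32\alpha-2)$ and as $n^\theta\to\infty$ — the deviation level itself is now growing, so one must verify that the exponent in Proposition \ref{prop-empiricalconcentration} still dominates the (also growing, but only polynomially) quantity $-\log(n\Prob{W>c_{n^\theta}(n)})$, which requires a careful bookkeeping of how the truncation level, the deviation, and the variance proxy scale together.
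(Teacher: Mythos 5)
Your proposal follows essentially the same route as the paper's proof: establish $c_{n^\theta}(n)\sim L^*(n)n^{\beta+\frac{\theta}{2}\frac{1}{\alpha-1}}=o(n)$ by rerunning Lemma \ref{lemma-can}, reduce $\triangle_n$ to $G_n$ via the Chatterjee bound (Lemma \ref{lem:expconditionedtausmallnew}), prove the single-big-jump dichotomy by redoing Proposition \ref{prop-onebigjumpneeded} with the truncation level $\varepsilon c_{n^\theta}(n)$ and Proposition \ref{prop-empiricalconcentration}, and get the lower bound from the weak law for one conditioned hub. The "careful bookkeeping" you flag as the main obstacle is exactly where the paper does its only real new work — re-estimating Terms 3, 5 and 6 of the ten-term bound \eqref{eq-tenterms}, where the condition $\theta<\frac32\alpha-2$ is used a second time (beyond $c_{n^\theta}(n)=o(n)$) to make the two-medium-hub term $o(m_n n^\theta)$ — so your plan is aligned with the paper's.
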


The proof of Theorem \ref{thm:thetasmall} follows the same steps as the proof of Theorem \ref{thm-triangle-singlebigjump}, but is at several points
slightly more technical. For readability we provide these additional technical details separately in Appendix \ref{app:proof:thm:thetasmall}.

\begin{figure}[tbp]
    \centering
    \includegraphics[width=0.5\linewidth]{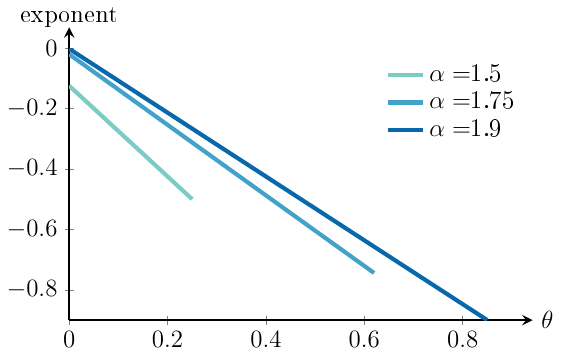}
    \caption{The exponent of Theorem~\ref{thm:thetasmall} plotted against $\theta$ for several values of $\alpha$.} 
    \label{fig:atau}
\end{figure}

Note that Theorem~\ref{thm:thetasmall} applies up to $\theta=(3/2)\alpha-2$. Thus, the higher $\alpha$, the larger the factor of deviations that can still be computed with this theorem, as also shown in Figure~\ref{fig:atau}. Furthermore, Figure~\ref{fig:atau} illustrates that for larger values of $\alpha$, a deviation of $n^\theta$ is more likely than for smaller values of $\alpha$. That is, more degree inhomogeneity makes deviations of the triangle counts more unlikely. At first sight, this may be in contrast with the intuition that degree inhomogeneity makes it more likely for extreme values of the weight sequence to appear, and therefore could make a deviation of the triangle counts more likely. However, the average number of triangles is also larger for low values of $\alpha$, so that a lower number of triangles is required for higher values of $\alpha$ to get the same deviating factor than for lower values of $\alpha$.

Combining the upper bound $(3/2)\alpha-2$ for $\theta$ with the fact that $\Exp{\triangle_n}$ is regularly varying with index $n^{3-3\alpha/2}$ shows that the theorem applies until deviations of order $n$, as $n^{3-3\alpha/2}n^{(3/2)\alpha-2}=n$. Intuitively, this is because the inhomogeneous random graph has on average $\mu n$ edges. A single high-degree vertex can therefore only create $\mu n$ triangles. However, the maximum possible number of triangles in a graph on $n$ vertices scales as $n^3$. 

We complement Theorem \ref{thm:thetasmall} by studying the cases where $\alpha<4/3$ or $\alpha>4/3$ and deviations of more than a factor of $n^{(3/2)\alpha-2}$ from average: 
\begin{theorem}\label{thm:triangldptausmall}
Suppose that $\Prob{W\geq 1}=1$ and $\Prob{W>x} \sim C x^{-\alpha}$ for $x\geq 1$. For $\gamma\in(\max(1,3-3\alpha/2),3)$ and $\alpha>1$ and $a>0$,
\begin{equation}
    \lim_{n\to\infty}\frac {\log  \Prob{\triangle_n>an^\gamma}}{n^{(\gamma-1)/2}\log(n)} =\sqrt{2a}\Big(\frac{3-\gamma}{2}-\alpha\Big)<0.
\end{equation}
\end{theorem}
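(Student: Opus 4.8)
The plan is to prove Theorem~\ref{thm:triangldptausmall} by the same two-hub strategy that drives Theorem~\ref{thm:smalldevs}, now tuned to the larger deviation scale $n^\gamma$ with $\gamma\in(\max(1,3-3/2\alpha),3)$. Since $\gamma>3-3/2\alpha$, we have $an^\gamma\gg m_n$, so the deviation is genuinely atypical and the heuristic of \textbf{planting hubs of weight $\mu n$} should again be optimal: $B$ vertices of weight $\mu n$ (which, because $\prob(W\geq 1)=1$, connect to everything) create roughly $nB^2/2$ triangles among pairs of hubs together with one of the $n$ regular vertices. To get $an^\gamma$ triangles we therefore need $B\approx\sqrt{2a}\,n^{(\gamma-1)/2}$ such hubs. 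The probability of seeing $B$ independent weights each exceeding $\mu n$ is $\big(\bar F(\mu n)\big)^{B}=\big((C+o(1))(\mu n)^{-\alpha}\big)^{B}$, whose logarithm is $-\alpha B\log n(1+o(1))=-\sqrt{2a}\,\alpha\,n^{(\gamma-1)/2}\log n(1+o(1))$. But we are measuring against the normalization $n^{(\gamma-1)/2}\log n$, and the claimed constant is $\sqrt{2a}\big(\tfrac{3-\gamma}{2}-\alpha\big)$, which is \emph{larger} (less negative) than $-\sqrt{2a}\,\alpha$. This extra $+\sqrt{2a}\,\tfrac{3-\gamma}{2}$ must come from an entropy/combinatorial factor: there are $\binom{n}{B}$ ways to choose which vertices are the hubs, and $\log\binom{n}{B}\approx B\log n=\sqrt{2a}\,n^{(\gamma-1)/2}\log n$ to leading order — but in fact, since each hub needs weight only of order $n$ rather than exactly $\mu n$, and one further optimizes the required hub size, the effective cost per hub is $\big(n\bar F(c n)\big)$ with the $n$ from the choice of vertex; a careful optimization of $c$ and $B$ jointly (minimizing $\alpha B\log(cn)-B\log n$ subject to $nB^2 c\cdot\text{const}\gtrsim an^\gamma$) yields the constant $\tfrac{3-\gamma}{2}-\alpha$ in front of $\sqrt{2a}$.

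For the \textbf{lower bound} I would make this explicit: fix $B=B_n=\lceil\sqrt{2a}\,(1+\delta)\,n^{(\gamma-1)/2}\rceil$ and a threshold $t_n=c\,n^{(3-\gamma)/(2)}/\text{(something)}$ — more precisely choose the hub weight scale $w_n$ and count $B_n$ so that $B_n$ hubs of weight $\geq w_n$ deterministically force $G_n$ (and then, via the concentration of $\triangle_n$ around $G_n$ from Chatterjee's inequality \cite{chatterjee2012missing}, also $\triangle_n$) to exceed $an^\gamma$; then lower-bound $\Prob{\triangle_n>an^\gamma}$ by the probability that at least $B_n$ of the $n$ i.i.d.\ weights exceed $w_n$, which is at least $\binom{n}{B_n}\big(\bar F(w_n)\big)^{B_n}(1-\bar F(w_n))^{n-B_n}$, and take logarithms. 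Optimizing the free scale $w_n$ (subject to the triangle-count constraint $n B_n^2 \cdot \min\{w_n^2/(\mu n),1\}\gtrsim a n^\gamma$, which for $w_n=o(\sqrt n)$ reads $B_n^2 w_n^2\gtrsim a\mu n^\gamma$) gives the matching constant. One must check $B_n=o(n)$ and $w_n\to\infty$ so all the asymptotics are in the stated regime; $\gamma<3$ ensures $B_n=o(n)$ and $\gamma>3-3/2\alpha$ ensures the deviation dominates the mean.

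For the \textbf{upper bound} I would follow the scheme announced for Theorem~\ref{thm:smalldevs}: split all triples of vertices into classes according to the scale of the two largest weights in the triple (both small, one large one small, two large), and bound the triangle contribution of each class separately. For triples in which all relevant weights are below a slowly-growing cutoff, apply the empirical-process concentration bound (Proposition~\ref{prop-empiricalconcentration} / the $f_n^*$ reformulation) to replace the random weights by their expectations and show this part cannot reach order $n^\gamma$ except with negligible probability. The contribution of triples containing one or two genuinely large weights is controlled by the order statistics of the weight sample: if $W_{(1)}\geq W_{(2)}\geq\cdots$ are the ordered weights, a union bound over the number $B$ of weights exceeding a scale $w$ and over the resulting (deterministic, via $f_n$) triangle count gives, for each $B$, a bound of the form $\binom{n}{B}\bar F(w)^B$ times an indicator that $B$ hubs of weight $\sim w$ suffice; summing over $B$ and $w$ on a discretized grid and taking logarithms reproduces the infimum $-\sqrt{2a}\,(\alpha-\tfrac{3-\gamma}{2})$, because the optimization is the same as in the lower bound. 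The \textbf{main obstacle} is the upper bound bookkeeping: one has to show that no \emph{mixed} configuration — say, a moderate number of moderately large hubs combined with unusually many mid-scale vertices — can beat the pure-hub strategy, which requires a uniform (in $B$ and in the weight scale) estimate controlling the joint contribution of $f_n$ over triples with at least one large coordinate, and then a careful Laplace-type analysis of the resulting sum to extract exactly the constant $\sqrt{2a}\big(\tfrac{3-\gamma}{2}-\alpha\big)$ rather than something merely of the same order. The concentration inequalities developed earlier are designed precisely to handle the small-weight part cleanly, so the work is in gluing them to the extreme-value analysis of the large-weight part without losing the sharp constant.
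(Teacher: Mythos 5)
Your overall route is the paper's route: plant $B_n=\sqrt{2a}\,n^{(\gamma-1)/2}$ hubs of weight $\mu n$ (which, since $\Prob{W\geq 1}=1$, connect to every vertex with probability one and hence create $nB_n^2/2\approx an^\gamma$ triangles in pairs), lower-bound by a binomial tail, and prove the upper bound by decomposing triples according to weight classes and controlling each class with concentration inequalities. However, your identification of the constant $\sqrt{2a}\bigl(\tfrac{3-\gamma}{2}-\alpha\bigr)$ contains a genuine error. The extra $\tfrac{3-\gamma}{2}$ does \emph{not} come from optimizing a hub scale $c$ jointly with $B$: it is exactly the entropy term, computed correctly. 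With $B_n\asymp n^{(\gamma-1)/2}$ one has $\log\binom{n}{B_n}\sim B_n\log(n/B_n)=\tfrac{3-\gamma}{2}\,B_n\log n$, \emph{not} $B_n\log n$ as you wrote; combined with $B_n\log \bar F(\mu n)\sim-\alpha B_n\log n$ this gives precisely $\bigl(\tfrac{3-\gamma}{2}-\alpha\bigr)B_n\log n$. Equivalently, the cost per hub is the binomial rate $\log\bigl(B_n/(n\bar F(\mu n))\bigr)=\bigl(\alpha-\tfrac{3-\gamma}{2}\bigr)\log n$, which is what the paper's Lemma on the number of heavy vertices delivers. Your alternative accounting, ``effective cost per hub is $n\bar F(cn)$,'' is the single-big-jump rate and would yield $(\alpha-1)$ per hub; it is only valid for $O(1)$ hubs. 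Moreover the optimization you propose is based on a wrong triangle count: for hubs of weight $w_n=o(n)$ the two hub-to-regular-vertex edges are far from certain, so ``$nB_n^2\min\{w_n^2/(\mu n),1\}\gtrsim an^\gamma$'' does not describe the number of triangles; the correct choice $w_n=\mu n$ is forced (larger weight buys nothing because all connection probabilities saturate), and no optimization over the scale is needed. The binomial formula $\binom{n}{B_n}\bar F(w_n)^{B_n}(1-\bar F(w_n))^{n-B_n}$ you also write down does give the right answer when evaluated correctly at $w_n=\mu n$, so the lower bound is salvageable once the entropy is fixed.

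For the upper bound you only sketch the strategy, and that is where essentially all of the paper's work lies. The paper splits the vertices into classes $A$ (weight $\leq\sqrt{\mu n}$) and $B_1,B_2,B_3$ (very large, intermediate, and moderate weights), bounds roughly ten triangle types separately ($AAA$ and $AB_2B_2$ via Chatterjee's concentration inequality after first bounding all type-$A$ degrees, $BBB$ and the $B_iB_1B_1$ types via binomial tail bounds on the class sizes, $B_3$-heavy types via the weighted empirical-process bound), and identifies $AB_1B_1$ as the unique type attaining the rate $\sqrt{2a}\bigl(\tfrac{(\gamma-3)}{2}+\alpha\bigr)n^{(\gamma-1)/2}\log n$. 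Your proposed union bound ``over $B$ and $w$ on a discretized grid'' would need exactly this kind of uniform control of mixed configurations to extract the sharp constant, and you correctly flag this as the main obstacle, but as written it is a statement of intent rather than an argument.
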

The proof of this theorem is similar to the proof of Theorem~\ref{thm:smalldevs}, and can be found in Appendix~\ref{sec:prooftausmallgamma}.
The logarithmic asymptotics of Theorems~\ref{thm:thetasmall} and~\ref{thm:triangldptausmall} match at the boundary where $\gamma=1 $ or $\theta=(3/2)\alpha-2$. We expect that a similar theorem as Theorem~\ref{thm-triangleboundary}  holds in this case. 
There is a sharp phase transition for $\alpha>4/3$ between deviations up to a factor of $n^{(3/2)\alpha-2}$ (Theorem~\ref{thm:thetasmall}) and larger deviations (Theorem~\ref{thm:triangldptausmall}). Similarly to the smaller deviations, this phase transition happens when the single hub scaling equals $n$. Indeed, for Theorem~\ref{thm:thetasmall}, the driving event is one hub of magnitude $n^{(\alpha+2\theta)/(4(\alpha-1))}$. For $\theta=(3/2)\alpha-2$, this means that one hub of order $n$
is necessary. A hub weight of $\mu n$ already makes all connection probabilities equal to one, so that increasing the hub weight further will not increase the number of triangles. Thus, to create even more triangles, a larger number of hubs is necessary, explaining the phase transition between Theorems~\ref{thm:thetasmall} and Theorem~\ref{thm:triangldptausmall}.

Interestingly, in the regime of Theorem~\ref{thm:triangldptausmall}, a lower value of $\alpha$ makes the probability of $n^\theta$ more triangles than expected more likely than a higher value of $\alpha$, contrary to the regime of Theorem~\ref{thm:thetasmall}. 

\subsection{Organization of the paper} 

In Section \ref{sec:gn:alphalarge}, we analyze the behavior of $G_n$ in the case $\alpha>4/3$, after having first developed a concentration bound (Proposition \ref{prop-empiricalconcentration}).  The tail behavior of $G_n$ for the boundary case $\alpha=4/3$ is analyzed in Section \ref{sec:gn:alphaboundary}. Section \ref{sec:prooftaularge} completes the proofs of Theorem \ref{thm-triangle-singlebigjump} and \ref{thm-triangleboundary}.
The case with many hubs, in particular Theorem \ref{thm:smalldevs} is proven in Section \ref{sec:prooftausmall}. 

We collect several proofs with more standard and/or repetitive arguments in the appendices.
The proof of the lemmas presented so far, as well as proofs of various auxiliary results in the subsequent three sections are given in Appendix \ref{app:sec14appendix}. Auxiliary results for Section \ref{sec:prooftausmall} are proven in Appendix \ref{app:manyhubs}. 
Finally, Appendix \ref{sec:prooftausmallgamma} and \ref{app:proof:thm:thetasmall} contain the necessary additional details which are needed to complete the proofs of Theorem \ref{thm:triangldptausmall} and Theorem \ref{thm:thetasmall}.

\section{Nonlinear heavy-tailed large deviations}
\label{sec:gn:alphalarge}

 Let
\begin{equation}
F_n(x) = \frac{1}{n} \sum_{i=1}^n I(W_i \leq x) , \hspace{1cm} x\geq 0.
\end{equation}
be the empirical distribution function associated with the weights $W_1,\ldots, W_n$ and observe that
\begin{equation}
\label{definition-gn}
    G_n = n^3 \int_0^\infty  \int_x^\infty  \int_y^\infty f_n(x,y,z) dF_n(z) dF_n(y) dF_n(x),
\end{equation}
with $f_n$ as in~\eqref{eq:fn}.
As a convention, integration regions are always of the form $(x,\infty)$, $(y,\infty)$, etc.\ to avoid double counting. 
This section proves the following theorem. 
\begin{theorem}
    \label{thm-gn}
    If $\alpha \in (4/3,2)$, then
    \begin{equation}
        \Prob{G_n > m_n (1+a) } = (1+o(1)) n \Prob{ W > c_a(n)}. 
    \end{equation}
\end{theorem}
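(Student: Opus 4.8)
}
The plan is to prove matching upper and lower bounds in the spirit of the heavy‑tailed ``single big jump'' principle: the event $\{G_n>(1+a)m_n\}$ is, up to negligible corrections, the event that exactly one weight is of order $c_a(n)$. Write $\bar F(x)=\Prob{W>x}$, and recall from Lemma~\ref{lemma-can} that $c_a(n)$ is regularly varying of index $\beta=\tfrac14\tfrac{\alpha}{\alpha-1}\in(1/2,1)$ for $\alpha\in(4/3,2)$, so $c_a(n)=o(n)$ while $n\bar F(c_a(n))$ is regularly varying of index $1-\alpha\beta$, which is strictly negative since $(\alpha-2)^2>0$ is equivalent to $\alpha\beta>1$; in particular $n\bar F(c_a(n))\to0$. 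Recall also $G_n=\sum_{i<j<k}f_n(W_i,W_j,W_k)$ and $\Exp{G_n}=m_n$. Fix a small $\varepsilon\in(0,1)$ (sent to $0$ at the end), set $b_n=\varepsilon c_a(n)$ and $N=\#\{i\le n:W_i>b_n\}$, and split $\Prob{G_n>(1+a)m_n}$ according to $N=0$, $N=1$, $N\ge2$. The case $N\ge2$ is immediate: $\Prob{G_n>(1+a)m_n,\,N\ge2}\le\binom n2\bar F(b_n)^2\le(n\bar F(\varepsilon c_a(n)))^2$, which is regularly varying of index $2(1-\alpha\beta)<1-\alpha\beta$, hence $o(n\bar F(c_a(n)))$.

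For $N=1$, condition on the identity of the unique large vertex (say $1$) and on $W_1=w>b_n$; the other weights are then i.i.d.\ from $F$ conditioned to $[0,b_n]$. Decompose $G_n=G_n'+H_n(w)$, with $G_n'$ the expected triangles among vertices $2,\dots,n$ and $H_n(w)$ those through vertex $1$; note $H_n(w)$ is nondecreasing in $w$. Fix $\gamma>0$ small (depending on $\varepsilon$, harmless since $\gamma\downarrow0$ last) so that $c_{a(1-\gamma)}(n)>b_n$ eventually. Proposition~\ref{prop-empiricalconcentration}, applied to the $n-1$ bulk weights, shows that on an event depending only on those weights and of conditional probability $1-o(1)$ one has $G_n'\le(1+o(1))m_n$ and $H_n(c_{a(1-\gamma)}(n))\le a(1-\gamma/2)m_n$, using that the conditional mean of $H_n(c_{a(1-\gamma)}(n))$ is at most $(1+o(1))\tfrac12 n^2\int_0^\infty\int_0^\infty f_n(x,y,c_{a(1-\gamma)}(n))\,dF(x)\,dF(y)=(1+o(1))a(1-\gamma)m_n$ by~(\ref{definition-can}). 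On this event, by monotonicity, $G_n\le(1+o(1))m_n+a(1-\gamma/2)m_n<(1+a)m_n$ for all $w\le c_{a(1-\gamma)}(n)$; hence $\{G_n>(1+a)m_n,\,W_1=w\}$ with $w<c_{a(1-\gamma)}(n)$ is contained in the complementary $o(1)$-event. Integrating over $w$ and using $\Prob{N=1}\le n\bar F(b_n)=O(\varepsilon^{-\alpha}n\bar F(c_a(n)))$ gives $\Prob{G_n>(1+a)m_n,\,N=1}\le n\bar F(c_{a(1-\gamma)}(n))+o(n\bar F(c_a(n)))$; since $c_{a(1-\gamma)}(n)\sim(1-\gamma)^{1/(2(\alpha-1))}c_a(n)$ by Lemma~\ref{lemma-can}, letting $n\to\infty$ and then $\gamma\downarrow0$ yields $\limsup_n\Prob{G_n>(1+a)m_n,\,N=1}/(n\bar F(c_a(n)))\le1$.

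The delicate case is $N=0$: here $\{G_n>(1+a)m_n,\,N=0\}=\{G_n^{(\varepsilon)}>(1+a)m_n\}$ with $G_n^{(\varepsilon)}:=G_n\,\ind{\max_\ell W_\ell\le b_n}$, all weights capped at $b_n=\varepsilon c_a(n)$. Because $n\bar F(c_a(n))$ decays only polynomially (rate $n^{1-\alpha\beta}$), a bounded-difference bound is hopeless — a single weight near $\varepsilon c_a(n)$ already perturbs $G_n^{(\varepsilon)}$ by order $\varepsilon^2c_a(n)^2$, which is far too large — and this is exactly where Proposition~\ref{prop-empiricalconcentration} enters. Since $G_n^{(\varepsilon)}\le G_n$ gives $\Exp{G_n^{(\varepsilon)}}\le m_n$ (and, more sharply, $G_n^{(\varepsilon)}$ is dominated with high probability by the deterministic functional in~(\ref{eq-deffnstar}), which is $\le m_n$), it suffices to show $\Prob{G_n^{(\varepsilon)}-\Exp{G_n^{(\varepsilon)}}>am_n}=o(n\bar F(c_a(n)))$. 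Writing $G_n$ as the functional~(\ref{definition-gn}) of the empirical distribution of the truncated weights, Proposition~\ref{prop-empiricalconcentration} controls this deviation through a weighted oscillation of the empirical process of the truncated weights — hence through the \emph{typical}, not the worst-case, weight sizes — and yields, for $\varepsilon$ fixed small enough, a bound that is indeed $o(n\bar F(c_a(n)))$. Combining the three cases gives $\Prob{G_n>(1+a)m_n}\le(1+o(1))\,n\bar F(c_a(n))$.

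For the matching lower bound, fix $\delta>0$ and let $A$ be the event that exactly one weight, relabelled $W_1$, exceeds $(1+\delta)c_a(n)$ while all others are $\le b_n$; since $n\bar F(\varepsilon c_a(n))\to0$ one has $\Prob A=(1+o(1))n\bar F((1+\delta)c_a(n))$. Conditioning on $A$ and on $W_1=w>(1+\delta)c_a(n)$, the bulk weights are i.i.d.\ on $[0,b_n]$, and Proposition~\ref{prop-empiricalconcentration} gives, with conditional probability $1-o(1)$, a bulk contribution $\ge(1-o(1))m_n$ and a through-$1$ contribution $\ge(1-o(1))\tfrac12 n^2\int_0^\infty\int_0^\infty f_n(x,y,w)\,dF(x)\,dF(y)\ge(1-o(1))\,a(1+\delta)^{2(\alpha-1)}m_n$, the last step using monotonicity in $w$, continuity of the integral, and Lemma~\ref{lemma-can} (which gives $(1+\delta)c_a(n)\sim c_{a(1+\delta)^{2(\alpha-1)}}(n)$). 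Since $(1+\delta)^{2(\alpha-1)}>1$, this exceeds $(1+a)m_n$ for $n$ large, whence $\Prob{G_n>(1+a)m_n}\ge(1-o(1))n\bar F((1+\delta)c_a(n))\sim(1+\delta)^{-\alpha}n\bar F(c_a(n))$; letting $\delta\downarrow0$ completes the proof. The main obstacle is Proposition~\ref{prop-empiricalconcentration} itself — obtaining, for the nonlinear heavy‑tailed functional $G_n^{(\varepsilon)}$, a concentration rate that beats $n^{1-\alpha\beta}$ — whose proof (built on the weighted empirical process inequality of~\cite{Wellner1978}) is the technical heart of this section; a secondary nuisance is the iterated passage to the limit ($n\to\infty$, then $\varepsilon,\gamma,\delta\downarrow0$) and the slowly varying bookkeeping relating hub weights to triangle counts via Lemma~\ref{lemma-can}.
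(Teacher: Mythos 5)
Your overall architecture coincides with the paper's: split on the number $L_n(\varepsilon c_a(n))$ of weights above $\varepsilon c_a(n)$, kill the case of two or more by a binomial bound, handle the case of exactly one by conditioning on the hub size and proving a law of large numbers for the bulk plus the hub contribution (the paper's Proposition~\ref{prop-wlln}), and obtain the lower bound by planting one hub of size $(1+\delta)c_a(n)$. Those parts are fine in outline, as is your identification of Proposition~\ref{prop-empiricalconcentration} as the key tool.

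The gap is in the case $N=0$, which is exactly the paper's Proposition~\ref{prop-onebigjumpneeded} and the bulk of Section~\ref{sec:gn:alphalarge}. Proposition~\ref{prop-empiricalconcentration} only controls $\bar F_n$; it does not by itself ``control the deviation'' of the nonlinear functional $G_n^{(\varepsilon)}$. What it gives, via monotonicity of $f_n$, is the bound $G_n^{(\varepsilon)}\le n^3\iiint f_n\,dF_n^*\,dF_n^*\,dF_n^*$ on the concentration event; the remaining and essential work is to evaluate this triple integral against the mixture $F_n^*$ (bulk part, atom at $b(n)$, atom at $\varepsilon c_a(n)$), i.e.\ the ten-term decomposition (\ref{eq-tenterms}) estimated via Lemmas~\ref{lemma-sbc} and~\ref{lemma-sb}. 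Your one concrete claim about this quantity --- that it is ``$\le m_n$'' --- is false: the term with one coordinate at the atom $\varepsilon c_a(n)$ and two bulk coordinates is asymptotic to $c\,\varepsilon^{2(\alpha-1)}a\,m_n$ by Lemma~\ref{lemma-sb} and the definition (\ref{definition-can}), a fixed positive fraction of $a\,m_n$ that does \emph{not} vanish with $n$; the argument only closes because one may choose $\varepsilon$ and $A$ small enough that the total excess over $m_n$ is strictly less than $a\,m_n$, after which the event $\{G_n^{(\varepsilon)}>(1+a)m_n\}$ is empty on $E_n(A,c,\delta)$ and its probability is bounded by $\Prob{E_n(A,c,\delta)^c}$. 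Moreover the term with two coordinates at the atom $\varepsilon c_a(n)$ is regularly varying of index $1-\alpha+\alpha^2/(4(\alpha-1))$, and showing this is of smaller order than $m_n$ requires exactly $\alpha\in(4/3,2)$ --- this is where the theorem's hypothesis actually enters, whereas your proposal uses $\alpha>4/3$ only through the formula for $\beta$. The same issue recurs, more mildly, in your $N=1$ upper bound, where the bulk contribution $G_n'$ is again $\le(1+O(\varepsilon^{2(\alpha-1)}))m_n$ rather than $(1+o(1))m_n$, so the $\varepsilon$-dependent excess must be balanced against the margin $\gamma$. In short, the step you defer to Proposition~\ref{prop-empiricalconcentration} is not its proof (a short consequence of~\cite{Wellner1978}) but the term-by-term evaluation of the functional at $F_n^*$, which is missing.
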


This theorem serves as a major stepping stone towards the proof of Theorem \ref{thm-triangle-singlebigjump}, which will be completed in Section \ref{sec:prooftaularge}. 
The proof of Theorem \ref{thm-gn} consists of the following steps.
\begin{enumerate}
    \item Building on concentration results for weighted empirical processes, dating back to \cite{Wellner1978}, we construct an event of high probability on which we can bound $F_n$ with a suitable function $F_n^*$, which is essentially a mixture of $F$, and two large (in $n$) atoms. 
    \item To effectively use $F_n^*$, we develop some estimates for the expected number of triangles generated by one or two large hubs.
    \item We combine both previous steps in constructing a sharp bound for $G_n$ which holds with high probability when there is no hub of size bigger than $\varepsilon c_a(n)$, for some $\varepsilon > 0$. 
    \item Using this sharp upper bound for $G_n$, we complete the proof of Theorem \ref{thm-gn}.
\end{enumerate}
These four steps are worked out in the next four subsections.

\subsection{Concentration of weighted empirical distribution functions} 
In this subsection, we construct a convenient upper bound for $\bar F_n(x)=1-F_n(x)$. 
Set uniform random variables $U_i = F(W_i), i\geq 1$.
Set $F_n^U$ as the empirical distribution function of the $U_i$ variables and $\bar{F}_n$ as their complementary comulative distribution function. Define 
\begin{equation}\label{eq:h}
h(x) = x (\log x-1) + 1.
\end{equation}
Then, Lemma 1 of \cite{Wellner1978} states, for $\lambda \geq 1, y\in (0,1]$,
\begin{equation}
\label{wellner1978}
    \Prob{ \sup_{t\in [y,1]} |F_n^U(t)/t| \geq \lambda } \leq  e^{-ny h(\lambda)}.
\end{equation}
Now, let $$a(n) = \bar F^{-1} (1/n) n^{-\delta},$$ for some $\delta>0.$
By taking $y=\bar F(a(n))$ and $\lambda= A+1$ in (\ref{wellner1978}) we get
\begin{equation}
\label{cbtail1}
\Prob{\sup_{x< a(n)} |\frac{\bar F_n(x)}{\bar F(x)}-1| > A} \leq e^{- n \bar F(a(n)) h(A+1)}.
\end{equation}
Let $\bar{F}(x)=1-F(x)$ and $$b(n) = \bar F^{-1} (1/n) n^{\delta}.$$
For a fixed $\delta \in (0, 1/\alpha-1)$, $c>0$, and $A>0$ define the event $E_n(A, c, \delta)$ by
\begin{equation}\label{eq:En}
\Big \{ \sup_{x< a(n)} \frac{\bar F_n(x)}{\bar F(x)} \leq  1+A,  \sup_{x\in  [a(n), b(n)]} \bar F_n(x)\leq  (1+A) \bar F(a(n)),  \sup_{x\in  [b(n), \infty)} \bar F_n(x)\leq  c/n \Big \}.
\end{equation} 
Observe that (\ref{cbtail1}) implies 
\begin{equation}
\label{cbtail2}
\Prob{ \sup_{x\in  [a(n), b(n)]} \bar F_n(x)>    (1+A) \bar F(a(n))} 
\leq 1-\Prob{E_n(A,c,\delta)}
\end{equation}
as $\bar F_n(x)$ is non-increasing.
Finally, we investigate the range $[b(n),\infty)$. Let $c>0$ and note that
\[
\Prob{ \sup_{x\in  [b(n), \infty)} \bar F_n(x)> c/n} = \Prob{\bar F_n(b(n))> c/n} = \Prob{ \sum_{i=1}^n I(U_i <\bar F(b(n))) > c}.
\]
We can use Lemma 2.3 from \cite{stegehuis2022scale} to upper bound the right-hand side and get 
\begin{equation}
\label{cbtail3}
\Prob{ \sup_{x\in  [b(n), \infty)} \bar F_n(x)> c/n} \leq 
(n\bar F(b(n)))^{\lceil c \rceil}.
\end{equation}
Note that the bounds (\ref{cbtail1}), (\ref{cbtail2}), (\ref{cbtail3}) hold for any continuous distribution $F$ with support on $[0,\infty)$. If $\bar F(x)$ is regularly varying with index $-\alpha<-1$, then 
$a(n)$ is regularly varying with index $1/\alpha -\delta$ and  $b(n)$ is regularly varying with index $1/\alpha +\delta$.
This makes the upper bounds in (\ref{cbtail1}) and (\ref{cbtail2}) go to 0 at a faster rate than polynomial. 

If $\alpha>1$ and $\delta>0$ is such that $1/\alpha +\delta<1$, the upper bound in (\ref{cbtail3}) is regularly varying with index $-\lceil c \rceil \delta \alpha$, which can be made to go to $0$ at any desired polynomial rate by picking $c$ appropriately large. We summarize our findings in the following proposition. 

\begin{proposition}
\label{prop-empiricalconcentration}
Let $\beta>0$. Then 
\begin{equation}
    1-  \Prob{E_n(A,c,\delta)} \leq 2 e^{- n \bar F(a(n)) h(A+1)}  + (n\bar F(b(n)))^{\lceil c \rceil},
\end{equation}
which is $o(n^{-\beta})$ if $\lceil c \rceil > \beta / \alpha \delta$ where $E_n(A,c,\delta)$ is as in~\eqref{eq:En}.
\end{proposition}

\noindent
{\em Application to sample averages.}
To illustrate the use of Proposition \ref{prop-empiricalconcentration},
consider the sample mean 
$\tau_n = \int_0^\infty x d F_n(x)$; in particular the probability 
\begin{equation}
\Prob{ \tau_n > (1+a) \mu}.
\end{equation}
A classical result dating back to Nagaev \cite{Nagaev1969} is that the most likely way this event occurs is by a single big jump of size $a \mu n$
and that this probability is regularly varying with index $-(\alpha-1)$. A critical step in the proof is to show that a jump of size at least $\varepsilon n$ is really necessary. In our notation, this entails showing that
for every $\beta<\infty$ there exists an $\varepsilon>0$ such that
\begin{equation}
\label{rs99}
\Prob{\int_0^{\varepsilon n} x d F_n(x) > \mu +a} = o(n^{-\beta}),
\end{equation} 
or in other words: the probability that a jump of at most $\varepsilon n$ creates the desired deviation is small.
A version of this result (which we actually need in Section \ref{sec:prooftaularge}), can be found in \cite{resnick1999}, and can also be proven with Bennett's inequality. 
We now show how (\ref{rs99}) follows from Proposition \ref{prop-empiricalconcentration}.
On the set $E_n(A,\delta, c)$, the following upper bound holds for $\bar F_n$ on $x\in [0,\varepsilon n]$: 
\begin{align}
\bar F_n(x)  & \leq (1+A) \bar F(x) I(x<a(n)) + (1+A) \bar F(a(n)) I(a(n)\leq x<b(n))\nonumber\\
& \quad + \frac{c}{n} I(x \in [b(n), \varepsilon n]).
\end{align}
Since we integrate against a non-decreasing function, we see that, on $E_n(A,\delta, c)$,
\begin{equation}
\int_0^{\varepsilon n} x d F_n(x) \leq (1+A) \int_0^{\infty} x d F(x) + (1+A)  b(n) \bar F(a(n)) + (c/n) (\varepsilon n).
\end{equation}
This is smaller than $\mu+ c\varepsilon + o(1)$, since $b(n) \bar F(a(n))$ is regularly varying of index 
$1/\alpha +\delta  - \alpha (1/\alpha -\delta)  <0$.
The desired result (\ref{rs99}) now follows by choosing $\varepsilon$ small enough so that $c\varepsilon < a$. 

Below, we apply this bounding technique to the nonlinear functional (\ref{definition-gn}). We believe the technique can 
be applied to other nonlinear functionals of $F_n$, like $U$-statistics, and other observables of inhomogeneous random graphs, such as clustering coefficients, degree correlations or general subgraph counts.

\subsection{Estimating the  number of triangles generated by  large hubs}

To successfully apply Proposition \ref{prop-empiricalconcentration}  to the particular nonlinear functional (\ref{definition-gn}), we need several auxiliary estimates. 
In particular, the following two lemmas will be convenient in the estimation of various single and double integrals appearing in our upper bound of $G_n$, obtained after applying Proposition \ref{prop-empiricalconcentration} to (\ref{definition-gn}). These integrals approximate with high probability the number of additional triangles caused by one or two hubs. 
Their proofs can be found in Appendix \ref{app:sec14appendix}.

\begin{lemma}
\label{lemma-sbc}
There exists a constant $K_1$ such that the following holds. 
Let $1>\alpha_c \geq \alpha_b>1/2$. Let $b(n)$ be regularly varying of index $\alpha_b$ and let $c(n)$ be regularly varying of index $\alpha_c$
with either $\alpha_c>\alpha_b$ or $c(n)= b(n)$. Then 
\begin{equation}\label{eq:Sbc}
S_{b,c}(n) :=  \int_0^\infty f_n(x,b(n),c(n))dF(x) \sim K_1 \frac{b(n)}{c(n)}\bar F(n/ c(n)).
\end{equation}
In particular, $S_{b,c}(n)$ is regularly varying of index  $-[\alpha (1-\alpha_c) + \alpha_c-\alpha_b]$.
\end{lemma}

\begin{lemma}
\label{lemma-sb}
    There exists a constant $K_2$ such that the following holds. 
Let $1>\alpha_b>1/2$. Let $b(n)$ be regularly varying of index $\alpha_b$. Then 
\begin{equation}\label{eq:Sbn}
S_{b}(n) :=  \int_0^\infty \int_x^\infty  f_n(x,y,b(n))dF(y)dF(x) \sim K_2  \frac n {(b(n))^2} \bar F (n/b(n))^2.
\end{equation}
In particular, $S_{b}(n)$ is regularly varying of index  $-[2(\alpha-1)(1-\alpha_b)+1]$.
\end{lemma}

\subsection{$G_n$ cannot be large without a big hub}

In this section, we establish a key result, namely that the following nonlinear analogue of (\ref{rs99}) holds. 
Define $L_n(z)$ as the number of $W_i$ for which $W_i> z$. 

\begin{proposition}
\label{prop-onebigjumpneeded}
There exists an $\varepsilon>0$ such that  
\begin{equation}
    \Prob{G_n > (1+a)m_n ; L_n(\varepsilon c_a(n))=0} = o( n \Prob{W_1> c_a(n)}), 
\end{equation}
where $c_a(n)$ is as in~\eqref{definition-can}.
\end{proposition}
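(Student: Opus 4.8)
The plan is to use the concentration bound of Proposition \ref{prop-empiricalconcentration} to dominate $\bar F_n$ on the event $\{L_n(\varepsilon c_a(n))=0\}$ by a deterministic surrogate tail, plug this surrogate into the integral representation \eqref{definition-gn} of $G_n$, and show that the resulting deterministic upper bound is strictly below $(1+a)m_n$ for a suitable small $\varepsilon$, so that the event in question is contained in $E_n(A,c,\delta)^c$ up to a set of the required smaller order. Concretely, first I would fix $\delta$ and $A$ small, and $c=\lceil c\rceil$ large (so that $\Prob{E_n(A,c,\delta)^c}=o(n\Prob{W>c_a(n)})$, using Proposition \ref{prop-empiricalconcentration} together with the fact that $n\Prob{W>c_a(n)}$ is regularly varying with a fixed negative exponent, by Lemma \ref{lemma-can}). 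On $E_n(A,c,\delta)\cap\{L_n(\varepsilon c_a(n))=0\}$ we have, for all $x$,
\[
\bar F_n(x) \leq (1+A)\bar F(x)\,I(x<a(n)) + (1+A)\bar F(a(n))\,I(a(n)\le x<b(n)) + \tfrac cn\,I(b(n)\le x<\varepsilon c_a(n)),
\]
and $\bar F_n(x)=0$ for $x\ge \varepsilon c_a(n)$. In other words $F_n$ is stochastically dominated by a distribution $F_n^*$ that is a mixture of $F$ truncated below $a(n)$, an atom of mass $\bar F(a(n))-c/n$ at $b(n)$, and an atom of mass $c/n$ at $\varepsilon c_a(n)$.

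Next I would substitute $F_n^*$ for $F_n$ in \eqref{definition-gn}. Since $f_n$ is coordinatewise non-decreasing and we integrate against non-decreasing functions, $G_n\le n^3\int\int\int f_n\,dF_n^*dF_n^*dF_n^*$ on the good event. Expanding the three-fold integral over the three pieces of $F_n^*$ gives a finite sum of terms indexed by how many of the three vertices sit at the two atoms. The term with all three vertices drawn from the $F$-part is $(1+A)^3$ times something bounded by $m_n$ (indeed by $\Exp{\triangle_n}$ computed from the truncated law, which is $(1+o(1))m_n$ by Lemma \ref{lemma-mean}); this contributes $(1+A)^3(1+o(1))m_n$, which is close to $m_n$ for $A$ small but not yet below $m_n$ — the slack must come from the atom terms being negligible. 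Terms with exactly one vertex at an atom of weight $w\in\{b(n),\varepsilon c_a(n)\}$ are controlled by Lemma \ref{lemma-sb}: each is at most $n^3\cdot(\text{atom mass})\cdot S_{w}(n)$, and by the definition \eqref{definition-can} of $c_a(n)$ the single-hub contribution of a hub of size exactly $c_a(n)$ already only reaches $am_n$, so a hub of size $\varepsilon c_a(n)$ with $\varepsilon<1$ produces strictly less, say at most $\rho(\varepsilon)am_n$ with $\rho(\varepsilon)\downarrow 0$ as $\varepsilon\downarrow0$ (here I use that $S_w(n)n^3\cdot\frac cn$ is of order $n^2 S_w(n)$, regularly varying in $w$, so scaling $w$ by $\varepsilon$ scales this by a fixed power of $\varepsilon$); the $b(n)$-atom term is even smaller because $b(n)=o(c_a(n))$ for $\alpha>4/3$. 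Terms with two or three vertices at atoms are controlled similarly using Lemma \ref{lemma-sbc} and the crude bound $f_n\le1$, and are of strictly smaller order in $n$ (they carry at least two factors of the tiny atom masses $c/n$ or $\bar F(a(n))=n^{\delta\alpha-1+o(1)}$). Altogether, on the good event,
\[
G_n \le (1+A)^3 m_n(1+o(1)) + \rho(\varepsilon)\,a\,m_n(1+o(1)) + o(m_n),
\]
which is $<(1+a)m_n$ for $n$ large once $A$ and $\varepsilon$ are chosen small enough that $(1+A)^3+\rho(\varepsilon)a<1+a$. Hence $\{G_n>(1+a)m_n; L_n(\varepsilon c_a(n))=0\}\subseteq E_n(A,c,\delta)^c$ for large $n$, and the probability is $o(n\Prob{W>c_a(n)})$.

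The main obstacle I anticipate is bookkeeping the atom terms sharply enough: one has to verify that the single-atom term from the $\varepsilon c_a(n)$ atom genuinely contributes at most $\rho(\varepsilon)am_n$ with $\rho(\varepsilon)\to 0$, which requires matching the scaling in Lemma \ref{lemma-sb} against the definition of $c_a(n)$ and the growth of $m_n$ from Lemma \ref{lemma-mean}, and that the condition $\alpha>4/3$ is exactly what makes $b(n)=o(c_a(n))$ (so the intermediate atom at $b(n)$ is harmless) and what keeps $a(n),b(n)$ below $\varepsilon c_a(n)$ so the truncation at $\varepsilon c_a(n)$ does not interfere with the first two pieces of $F_n^*$. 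A secondary point of care is that $\varepsilon$ must be fixed \emph{before} sending $n\to\infty$, and that $A,\delta,c$ are then chosen depending on $\varepsilon$ (and on the target rate $\beta$) so that Proposition \ref{prop-empiricalconcentration} still delivers $o(n\Prob{W>c_a(n)})$; since the latter is regularly varying with a known negative exponent, this only requires $\lceil c\rceil$ large, which is permissible.
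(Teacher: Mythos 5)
Your proposal is correct and follows essentially the same route as the paper: domination of $\bar F_n$ by the surrogate $\bar F_n^*$ from Proposition \ref{prop-empiricalconcentration}, the ten-term expansion of the triple integral over the three pieces of $F_n^*$, control of the atom terms via Lemmas \ref{lemma-sbc} and \ref{lemma-sb} (with the single $\varepsilon c_a(n)$-atom term giving $\sim c\,\varepsilon^{2(\alpha-1)}a\,m_n$ by the definition of $c_a(n)$, and the condition $\alpha>4/3$ entering exactly where you expect, via $(\alpha-4/3)(\alpha+2)<0$ for the two-atom term). The one small correction: the quantifiers go the other way around from what you wrote at the end — $c$ is fixed first (large enough for the desired polynomial rate in Proposition \ref{prop-empiricalconcentration}) and \emph{then} $\varepsilon$ is taken small enough that $c\,\varepsilon^{2(\alpha-1)}a<a$, since the hub-atom contribution carries the factor $c$.
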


\begin{proof}
On the set $E_n(A,c,d)\cap \{L_n(\varepsilon c_a(n))=0\}$, the following upper bound holds for $\bar F_n$ for all $x\geq 0$
\begin{equation}
\bar F_n(x) \leq \bar F_n^*(x),
\end{equation}
with $\bar F_n^*(x)$ defined by
\begin{equation}
\label{eq-deffnstar}
(1+A) \bar F(x) I(x<a(n)) + (1+A) \bar F(a(n)) I(a(n)\leq x<b(n)) + \frac{c}{n} I(x \in [b(n), \varepsilon c_a(n)]).
\end{equation}
We have that $b(n)/c_a(n)\rightarrow 0$ [for $\delta$ small enough, to be specified later].
Since $f_n$ is nondecreasing in each coordinate for fixed $n$, we can use the property $\bar F_n(x) \leq \bar F_n^*(x)$ to conclude that, on the set $E_n(A, \delta,  c) \cap \{ L_n(\varepsilon c_a(n))=0\}$, 
\begin{equation}
\label{eq:Gnub}
  G_n \leq  n^3 \int_0^\infty \int_x^\infty\int_y^\infty   f_n(x,y,z) dF_n^*(z)dF_n^*(y)dF_n^*(x).
\end{equation}

The next step is to evaluate the integral on the right-hand side of (\ref{eq:Gnub}), where we need to keep track of the value of $x,y,z$ in each of the $3$ terms in $F_n^*$: they may be of (s)mall ($<a(n)$), (m)edium ($b(n)$) or (l)arge ($\varepsilon c_a(n)$) value. 
There are 10 different combinations: (s,s,s), (s,s,m), (s,s,l), (s,m,m), (s,l,l), (s,m,l),  (m,m,m), (m,m,l), (m,l,l), (l,l,l).
Thus, on the set $E_n (A, c, \delta) \cap \{ L_n(\varepsilon c_a(n))=0\}$, 
\begin{align}
\frac{G_n}{(1+A)^3}&\leq  m_n +  n^3\bar F(a(n))\int_0^\infty\int_x^\infty  f_n(x,y,b(n)) dF(y)dF(x) \nonumber \\ 
&+ n^3 \frac{c}{n}\int_0^\infty\int_x^\infty f_n(x,y,\varepsilon c(n)) dF(y)dF(x)\nonumber\\
&  + n^3 (\bar F(a(n)))^2 \int_0^\infty f_n(x,b(n),b(n)) dF(x) \nonumber \\
&+ n^3 (c/n)^2 \int_0^\infty  f_n(x,\varepsilon c_a(n),\varepsilon c_a(n)) dF(x) \nonumber\\
& + n^3 \bar F(a(n)) \frac{c}{n}\int_0^\infty  f_n(x,b(n),\varepsilon c_a(n)) dF(x) \nonumber \\
&+ n^3 (\bar F(a(n)))^3f_n(b(n),b(n),b(n))+n^3 (\bar F(a(n)))^2(c/n)f_n(\varepsilon c(n),b(n),b(n)) \nonumber \\
&+n^3 (\bar F(a(n))) (c/n)^2f_n(\varepsilon c(n),\varepsilon c(n),b(n))+ n^3(c/n)^3f_n(\varepsilon c_a(n),\varepsilon c_a(n),\varepsilon c_a(n)).
\label{eq-tenterms}
\end{align}
Apart from the main term $m_n$, we need to bound 9 terms in total.
In what follows, we often use that $a_n$ is regularly varying of index $1/\alpha -\delta$, that $\bar F(a_n)$ is regularly varying of index $\alpha \delta - 1$,  that $b(n)$ is regularly varying of index $1/\alpha+\delta$, and that $c(n)=c_a(n)$ is regularly varying of index $\alpha_c = \frac 14 \frac{\alpha}{\alpha-1}$. 
The last 4 terms are all bounded by at most $O(n^{3\alpha \delta})$ since $f_n\leq 1$, and are therefore $o(m_n)$.
We now examine Terms 2--6 in more detail. \\

\noindent
{\bf Term 2:} 
Lemma \ref{lemma-sb} with $\alpha_b = 1/\alpha+\delta$ yields that $n^3\bar F(a(n))\int f_n(x,y,b(n)) dF(x)dF(y)$ is regularly varying of index $5-2(\alpha+1/\alpha)+ \delta (3\alpha-2)$. 
For $\delta$ small enough, this can be made strictly smaller than $3-\alpha 3/2$, using the fact that $\alpha + 1/\alpha> 2$ when $\alpha \in (1,2)$. Thus, we can conclude that, for $\delta$ sufficiently small, 
\begin{equation}
n^3\bar F(a(n))\int_0^\infty\int_x^\infty f_n(x,y,b(n)) dF(y)dF(x) = o(m_n).
\end{equation} 

\noindent
{\bf Term 3:}
invoking Lemma \ref{lemma-sb} with $\varepsilon c_a(n)$, and using definition (\ref{definition-can}) we obtain that
\begin{equation}
n^3 \frac{c}{n}\int_0^\infty\int_x^\infty f_n(x,y,\varepsilon c_a(n)) dF(x)dF(y)  \sim c \varepsilon^{(\alpha-1)2} am_n,
\end{equation}
for every $\varepsilon>0$.\\

\noindent
{\bf Term 4:}
invoking Lemma \ref{lemma-sbc} with both sequences equal to $b(n)$, and $\alpha_b= \alpha_c=1/\alpha+\delta$ gives that \\
$n^3 (\bar F(a(n)))^2 \int f_n(x,b(n),b(n)) dF(x)$ is regularly varying of index 
$2-\alpha + \delta 3\alpha$, which is smaller than $3-\alpha 3/2$ for a suitable choice of $\delta$, as $2-\alpha < 3-\alpha 3/2$ for $\alpha <2$. 
Thus, we can conclude that, for $\delta$ sufficiently small, 
\begin{equation}
    n^3 (\bar F(a(n)))^2 \int_0^\infty f_n(x,b(n),b(n)) dF(x) = o(m_n).
\end{equation}

\noindent
{\bf Term 5:}
invoking Lemma \ref{lemma-sbc} with both sequences equal to $\varepsilon c_a(n)$ and  $\alpha_b=\alpha_c= \frac 14 \frac{\alpha}{\alpha-1}$ it follows that\\
$n^3 (c/n)^2 \int_0^\infty f_n(x,\varepsilon c_a(n),\varepsilon c_a(n)) dF(x)$ is regularly varying of index $1-\alpha + \alpha^2 / (4(\alpha-1)$.
This is strictly smaller than $3-\alpha 3/2$: the inequality 
$
1-\alpha + \alpha^2 / (4(\alpha-1) < 3-\alpha 3/2
$
can be rewritten into $(\alpha -4/3)(\alpha+2)<0$ which is true due to our assumption $\alpha \in (4/3,2)$.
Thus, we can conclude that, for $\delta$ sufficiently small, 
\begin{equation}
  n^3 (c/n)^2 \int_0^\infty f_n(x,\varepsilon c_a(n),\varepsilon c_a(n)) dF(x) = o(m_n).
\end{equation}

\noindent
{\bf Term 6:}
invoking Lemma \ref{lemma-sbc} with sequences $b(n)$ and $\varepsilon c_a(n)$, such that $\alpha_b=1/\alpha+\delta$ and $\alpha_c= \frac 14 \frac{\alpha}{\alpha-1}$, it follows that this term behaves like
behaves like Term 5, times an additional factor which is regularly varying of index $\delta (1+\alpha) - (\frac 14 \frac \alpha{\alpha -1} - \frac 1\alpha$).
As this factor converges to $0$ for sufficiently small $\delta$, we conclude also that 
\begin{equation}
  n^3 \bar F(a(n)) \frac{c}{n}\int_0^\infty f_n(x,b(n),\varepsilon c_a(n)) dF(x)=o(m_n).  
\end{equation}
Concluding, we see that, for every $A>0$, on the set $E_n({A, \delta, c}) \cap \{ L_n(\varepsilon c_a(n))=0\}$, 
\begin{equation}
    G_n\leq (1+A) m_n (1+ 3c \varepsilon^{(\alpha-1)2}  a) (1+o(1))
\end{equation}
which is strictly smaller than $1+a$ for $A,\varepsilon$ sufficiently small. We conclude that 
$$\Prob{G_n > (1+a)m_n ;  E_n({A, \delta, c}) \cap \{ L_n(\varepsilon c_a(n))=0\}}=0$$ for sufficiently large $n$, so that 
\begin{align}
& \Prob{G_n > (1+a)m_n ; \{ L_n(\varepsilon c_a(n))=0\}}\nonumber\\
& \quad  \leq \Prob{G_n > (1+a)m_n ; \{ L_n(\varepsilon c_a(n))=0\}; E_n(A,\delta,c)} 
 + \Prob{E_n(A, \delta, c)^c},
\end{align}
which can be made to go to 0 at any polynomial rate by a suitable choice of $c$ and $\varepsilon$, using Proposition \ref{prop-empiricalconcentration}.
\end{proof}

\subsection{Proof of Theorem \ref{thm-gn}}

Using a simple bound for binomial distributions (e.g. ~\cite[Lemma 2.3]{stegehuis2022scale}) one can show that   
\begin{equation}
    \Prob{G_n > (1+a)m_n ; L_n(\varepsilon c_a(n))\geq 2}\leq  \Prob{L_n(\varepsilon c_a(n))\geq 2} = o( n \Prob{W> c_a(n)}). 
\end{equation}
In view of this estimate, Proposition \ref{prop-onebigjumpneeded} and symmetry of $G_n$ as a function of the weights, it suffices to show that
\begin{equation}
\label{eq-singlejobsufficient}
\Prob{G_n> (1+a) m_n, W_n > \varepsilon c_a(n) > W_i, i<n} = (1+o(1))  \Prob{W> c_a(n)}. 
\end{equation}
Write $D_n = \{  W_n > \varepsilon c_a(n) > W_i, i<n \}$ and
\begin{equation}
\label{eq-dnconditioning}
\Prob{G_n> (1+a) m_n, D_n} = \Prob{G_n> (1+a) m_n \mid D_n} \Prob{D_n}. 
\end{equation}
Next, we condition on the  value of $W_n/c_a(n)$ given $D_n$:
\begin{align}
\label{Gncondition}
    & \Prob{G_n> (1+a) m_n \mid D_n} \nonumber\\
    & = \int_{y=\varepsilon}^\infty \Prob{G_n> (1+a) m_n \mid D_n; W_n = yc_a(n)} d\Prob{W_n/c_a(n) \leq y \mid D_n}.
\end{align}
We now state the following proposition, providing a version of the weak law of large numbers for $G_n$, which will be proven later on. 
\begin{proposition}
\label{prop-wlln}
As $n\rightarrow\infty$,
\begin{equation}
\label{eq-ub}
   \Prob{G_n> (1+a) m_n \mid D_n; W_n = yc_a(n)} \rightarrow \begin{cases} 0&  y < 1,\\
   1 & y>1
   \end{cases}
\end{equation}
\end{proposition}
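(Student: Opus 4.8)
\textbf{Proof proposal for Proposition \ref{prop-wlln}.}
The plan is to condition on $D_n$ and on $W_n = y c_a(n)$, and to decompose $G_n$ into the triangles not involving vertex $n$ and the triangles involving vertex $n$. Write $G_n = G_n^{(0)} + G_n^{(1)}$, where $G_n^{(0)}$ counts (in conditional expectation) triangles among $W_1,\dots,W_{n-1}$, and $G_n^{(1)}$ counts triangles that use vertex $n$ as one corner. The first step is to show $G_n^{(0)} = (1+o_\prob(1)) m_n$: on $D_n$ all of $W_1,\dots,W_{n-1}$ are bounded by $\varepsilon c_a(n)$, so Proposition \ref{prop-onebigjumpneeded} (more precisely the deterministic upper bound obtained inside its proof on the event $E_{n-1}(A,\delta,c)\cap\{L_{n-1}(\varepsilon c_a(n))=0\}$, combined with Proposition \ref{prop-empiricalconcentration}) shows $G_n^{(0)} \leq (1+A)m_n(1+o(1))$ with high probability for every $A>0$; a matching lower bound $G_n^{(0)} \geq (1-o_\prob(1)) m_n$ follows from the weak law of large numbers for the $U$-statistic-type functional (\ref{definition-gn}) restricted to $n-1$ i.i.d.\ weights together with $m_{n-1} \sim m_n$ (regular variation, Lemma \ref{lemma-mean}). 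Hence $G_n^{(0)}/m_n \to 1$ in probability.

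The second step is to identify the limit of $G_n^{(1)}/m_n$ when $W_n = y c_a(n)$. Here
\[
G_n^{(1)} = n^2 \frac 12 \int_0^\infty \int_0^\infty f_n(x_1,x_2, y c_a(n))\, dF_n^{(n-1)}(x_1)\, dF_n^{(n-1)}(x_2),
\]
where $F_n^{(n-1)}$ is the empirical distribution of $W_1,\dots,W_{n-1}$ (I will absorb the $(n-1)$-versus-$n$ discrepancy into the $1+o(1)$, since $c_a(n)\to\infty$ is regularly varying and $n^2 \sim (n-1)^2$). Conditionally on $D_n$, the weights $W_1,\dots,W_{n-1}$ are i.i.d.\ with law $F$ truncated to $[0,\varepsilon c_a(n)]$, which converges to $F$ itself since $c_a(n)\to\infty$. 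Because $f_n$ is bounded by $1$ and continuous, a weak law of large numbers for the double integral gives
\[
G_n^{(1)} = (1+o_\prob(1))\, n^2 \frac 12 \int_0^\infty \int_0^\infty f_n(x_1,x_2, y c_a(n))\, dF(x_1)\, dF(x_2).
\]
By the very definition (\ref{definition-can}) of $c_a(n)$, the deterministic quantity on the right equals $(1+o(1))\, y'\, a\, m_n$ where $y' = y'(y)$ is a continuous, strictly increasing function of $y$ with $y'(1) = 1$; more precisely, combining Lemma \ref{lemma-sb} with the regular variation of $c_a(n)$ of index $\beta = \tfrac14\tfrac{\alpha}{\alpha-1}$ shows that $n^2\frac12\int\int f_n(x_1,x_2, y c_a(n))\,dF\,dF$ is asymptotic to $y^{2(\alpha-1)} a m_n$ (the same computation as in ``Term 3'' of the proof of Proposition \ref{prop-onebigjumpneeded}, with $\varepsilon$ replaced by $y$). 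So $G_n^{(1)}/m_n \to y^{2(\alpha-1)} a$ in probability.

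Combining the two steps, $G_n/m_n \to 1 + y^{2(\alpha-1)} a$ in probability (conditionally on $D_n$ and $W_n = y c_a(n)$). Since $a>0$, the limit is $>1+a$ iff $y^{2(\alpha-1)} > 1$ iff $y>1$, and $<1+a$ iff $y<1$; therefore $\Prob{G_n > (1+a)m_n \mid D_n, W_n = y c_a(n)}$ tends to $0$ for $y<1$ and to $1$ for $y>1$, which is exactly (\ref{eq-ub}).

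\textbf{Main obstacle.} The delicate point is making the two weak laws uniform enough in the conditioning. For the $y>1$ half we need a \emph{lower} bound on $G_n^{(1)}$, and the empirical-process upper bounds of Proposition \ref{prop-empiricalconcentration} only control $\bar F_n$ from above; one has to supplement them with a lower concentration bound for $\bar F_n$ on the relevant range (again available from \eqref{wellner1978}, or directly from Bennett/Chernoff for the fixed truncation level), and to check that the contribution of weights in the ``medium'' range $[a(n),b(n)]$ does not spoil the lower bound — this is why one wants $\delta$ small, exactly as in Proposition \ref{prop-onebigjumpneeded}. A second technical nuisance is that the conditioning on $D_n$ truncates $W_1,\dots,W_{n-1}$ at $\varepsilon c_a(n)$, so one must verify that replacing the truncated law by $F$ in the double integral costs only $o(m_n)$; this follows because the truncation removes only an $O(n\bar F(\varepsilon c_a(n))) = o(1)$ expected number of weights and $f_n\le 1$, so the discarded triangles number $o(n^2) = o(m_n)$ in expectation. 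None of these steps is conceptually new relative to the preceding subsections; the work is in bookkeeping the error terms.
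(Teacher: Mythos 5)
Your proposal is correct and follows essentially the same route as the paper: the paper likewise splits $G_n$ into the bulk term (controlled from above via the $F_n^*$ bound of Proposition \ref{prop-onebigjumpneeded} and from below via the truncated mean $m_n(a(n))$ together with $\bar F_n \geq (1-A)\bar F$ on $[0,a(n)]$) and the single-hub contribution, whose size is identified through Lemma \ref{lemma-sb} and the defining property of $c_a(n)$. Your exponent $y^{2(\alpha-1)}$ for the hub contribution is in fact the one consistent with Term 3 of Proposition \ref{prop-onebigjumpneeded} and with Lemma \ref{lemma-can}; the only difference from the paper is that you state a two-sided limit for $G_n^{(1)}/m_n$ where the paper proves just the one-sided bound needed in each regime, which is immaterial since both inequalities are available from the same concentration estimates.
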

Applying Proposition \ref{prop-wlln} to (\ref{Gncondition}), we see that
\begin{equation}
     \Prob{G_n> (1+a) m_n \mid D_n} \sim \Prob{W_n > c_a(n) \mid D_n},
\end{equation}
which together with (\ref{eq-dnconditioning}) implies (\ref{eq-singlejobsufficient}), proving Theorem \ref{thm-gn}.

\begin{proof}[Proof of Proposition \ref{prop-wlln}]

The idea of the proof is we represent our random graph by a random graph with $n-1$ vertices and truncated weights, and then add a single vertex with weight $yc_a(n)$.  
We first prove (\ref{eq-ub}) for $y>1$.
For $W_i^z=W_iI(W_i<z)$, define the truncated mean $m_{n-1}(z) = \Exp{G_{n-1}(W_1^z,...,W_{n-1}^z)}$, and observe that (e.g.\ by inspecting the proof of Lemma \ref{lemma-mean}), for $a(n) = \bar F^{-1}(n) n^{-\delta}$,
\begin{equation}
\label{eq-truncatedmean}
m_{n-1} (a(n)) = (1+o(1))m_n
\end{equation}
if $\delta>0$ is small enough. 
On the event $D_n$ and when $W_n=yc_a(n)$ we can write, since $a(n) \geq \varepsilon c_a(n)$ for $n$ large enough, and $f_n(x,y,z)$ is non-decreasing in each coordinate for fixed $n$,
\begin{align}
    G_{n}(W_1^{a(n)}, ..., W_{n-1}^{a(n)}, y c_n(a)) &\geq  n^3\int_0^{a(n)} \int_u^{a(n)}\int_v^{a(n)} f_n(u,v,w) dF_{n-1}(w)dF_{n-1}(v)dF_{n-1}(u) \nonumber \\
    &+ n^2\int_0^{a(n)} \int_u^{a(n)}f_n(u,v,yc_a(n)) dF_{n-1}(v)dF_{n-1}(u)
    \label{Gnlowerbound}
\end{align}
To bound the first term on the right-hand side of this expression, 
 observe that, for any constant $A>0$, with high probability, (\ref{cbtail1}) implies that $\bar F_{n-1}(x) \geq (1-A) \bar F(x)$ on $x\in [0,a(n)]$.
 Now, observe that, for any constant $A>0$, with high probability, (\ref{cbtail1}) implies that $\bar F_{n-1}(x) \geq (1-A) \bar F(x)$ on $x\in [0,a(n)]$.  Therefore, we see that, with high probability on the set $D_n$, given $W_n=yc_a(n)$,
\begin{align*}
   n^3\int_0^{a(n)} \int_u^{a(n)}\int_v^{a(n)} f_n(u,v,w) dF_{n-1}(w)dF_{n-1}(v)dF_{n-1}(u) &\geq  (1-A)^3 m_{n-1}(a(n))\\
  &\sim (1-A)^3 m_{n}.
   \end{align*}
To bound the second term in (\ref{Gnlowerbound}), we use again the fact
that $\bar F_{n-1}(x) \geq (1-A) \bar F(x)$ on $x\in [0,a(n)]$ with high probability, to obtain 
\begin{align*}
   & n^2\int_0^{a(n)} \int_u^{a(n)}f_n(u,v,yc_a(n)) dF_{n-1}(v)dF_{n-1}(u)\\
&\geq (1-A)^2n^2\int_0^{a(n)} \int_u^{a(n)}f_n(u,v,yc_a(n)) dF(v)dF(u)\\
&\sim (1-A)^2 n^2 K_2 \frac{2}{(yc_a(n))^2} \bar F\left(\frac{n}{yc_a(n)}\right)^2\\
&\sim (1-A)^2 K_2 n^3 y^{-2}y^{2\alpha} c_a(n)^{2\alpha}\\
&\sim (1-A)^2 K_2 y^{2\alpha-2} am_n.
\end{align*}
In the last 4 steps, 
 we applied Lemma \ref{lemma-sb} with $y c_a(n)$, the defining property of $c_a(n)$, and regular variation.  Since $y>1$, there exists an $A>0$ such that the RHS of (\ref{Gnlowerbound}) is larger
than $(1+a)m_n$ with high probability, in view of (\ref{eq-truncatedmean}), which proves (\ref{eq-ub}) for $y>1$.

We proceed with $y<1$. Fix $y\in (0,1)$ and let $\varepsilon \in (0,y)$. Recall the definition (\ref{eq-deffnstar}) of $F_n^*$. For any $A>0$ we have with high probability on $D_n$
\begin{align}
    G_{n}(W_1^{\varepsilon c_a(a)}, ..., W_n^{\varepsilon c_a(a)}, y c_n(a)) &\leq \mbox{ RHS of (\ref{eq-tenterms}}) \nonumber\\
    & + n^2 \int_0^{\varepsilon c_a(n)} \int_u^{\varepsilon c_a(n)}f_n(u,v,yc_a(z)) 
    dF_n^*(v)dF_n^*(u).
\end{align}
As we have proven, the RHS of (\ref{eq-tenterms}) is close to $m_n$ by taking $\varepsilon$ sufficiently small. It therefore suffices to show that the second term is
strictly smaller than $am_n$ for $n$ sufficientlly large. 
The second term in the right-hand side of the last display can be analyzed in a way similar to (\ref{eq-tenterms}). In particular, we can upper bound it with 
\begin{align*}
     & O(n^{2\alpha \delta}) + n^2 (1+A)^2S_{yc_a(n)}(n)+ n^2(1+A) \frac cn S_{\varepsilon c_a(n), yc_a(n)} \nonumber\\
     & \quad + n^2(1+A) \bar F(a(n)) S_{b(n), yc_a(n)}(n),
\end{align*}
with $S_{b,c}(n)$ and $S_b(n)$ as in~\eqref{eq:Sbc} and~\eqref{eq:Sbn}.
Call the three main terms on the RHS of this expression Term A,B,C. Term B behaves similar to Term 5 in the RHS of (\ref{eq-tenterms}), and Term C behaves like Term 6 in the RHS of (\ref{eq-tenterms}). In particular, both terms are $o(m_n)$. Finally, Term A behaves like $(1+A) m_n a y^{3-\alpha 3/2}$, 
in view of Lemma \ref{lemma-sb} with $y c_a(n)$, the defining property of $c_a(n)$, and regular variation.  
Since $y<1$, there exists an $A>0$ such that the last display is strictly smaller than $am_n$ for $n$ sufficiently large, proving (\ref{eq-ub}) for $y<1$.
\end{proof}

\section{The boundary case $\alpha=4/3$}
\label{sec:gn:alphaboundary}

Recall that, for $b>0$, $X_i^b, i\geq 1$, is an i.i.d.\ sequence such that $\prob(X_i^b>x) = (x/b)^{-\alpha}, x \geq b$.
Set $\eta(a)$ as the smallest number $\eta$ for which  $((k(a)-1)\mu + K_1(\eta) \geq C^3H(1+ a)$. Note that 
$\eta(a)>0$ when  $(k(a)-1) \mu + (k(a)-1)(k(a)-2)/2 < a C^3 H$.

The goal of this section is to prove the following theorem, which serves as a major stepping stone towards Theorem \ref{thm-triangleboundary}. 

\begin{theorem}
\label{thm-gnboundary}
    Suppose that $\Prob{W>x}\sim C x^{-4/3}$ and suppose that  $(k(a)-1) \mu + (k(a)-1)(k(a)-2)/2 <  a C^3 H$.
     Then 
    \begin{equation}
    \label{eq-asymptoticsboundary2}
        \Prob{G_n > (1+a) m_n} \sim
        \prob( K_{k(a)}(X_1^{\eta(a)}, \ldots, X_{k(a)}^{\eta(a)}) \geq C^3Ha) (n\Prob{W> \eta(a) n})^{k(a)}.
    \end{equation}
\end{theorem}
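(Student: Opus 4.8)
The plan is to follow the same four-step architecture used to prove Theorem \ref{thm-gn}, adapted to the situation where $k(a)$ hubs rather than a single hub are required. The structure is dictated by the identity $G_n = n^3 \int\int\int f_n \, dF_n\,dF_n\,dF_n$ from \eqref{definition-gn} together with the concentration bound of Proposition \ref{prop-empiricalconcentration}, which remains valid at $\alpha = 4/3$ (it holds for any regularly varying tail with index $-\alpha < -1$). First I would establish the analogue of Proposition \ref{prop-onebigjumpneeded}: there exists $\varepsilon > 0$ such that $\Prob{G_n > (1+a)m_n;\, L_n(\varepsilon \eta(a) n) \le k(a)-1} = o((n\Prob{W > \eta(a)n})^{k(a)})$. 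The argument mirrors the existing proof: on the event $E_n(A,c,\delta) \cap \{L_n(\varepsilon \eta(a)n) \le k(a)-1\}$ one bounds $\bar F_n$ by $\bar F_n^*$ from \eqref{eq-deffnstar} but now with the large atom placed at $\varepsilon \eta(a) n$ and allowing up to $k(a)-1$ points there; expanding the triple integral over the (s)mall/(m)edium/(l)arge coordinate combinations and using Lemmas \ref{lemma-sbc}, \ref{lemma-sb} plus the defining relations for $k(a)$, $K_l$ and $\eta(a)$, each term is either $o(m_n)$ or contributes strictly less than $a m_n$ because $(k(a)-1)\mu + (k(a)-1)(k(a)-2)/2 < aC^3H$ by hypothesis — so $G_n < (1+a)m_n$ deterministically on that event for small $A,\varepsilon$. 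One also needs the trivial direction: $\Prob{L_n(\varepsilon\eta(a)n) \ge k(a)+1} = o((n\Prob{W>\eta(a)n})^{k(a)})$ via the binomial bound \cite[Lemma 2.3]{stegehuis2022scale}. Together these reduce the problem to the event that exactly $k(a)$ of the weights exceed $\varepsilon \eta(a) n$ while the rest are below it.

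Next I would condition on the event $D_n^{(k)} = \{W_{n-k(a)+1}, \ldots, W_n > \varepsilon \eta(a)n > W_i,\ i \le n-k(a)\}$ and on the rescaled values $(W_{n-j+1}/n)_{j=1}^{k(a)} = (z_1,\ldots,z_{k(a)})$. Given these, the analogue of the weak law of large numbers (Proposition \ref{prop-wlln}) should say that $G_n/m_n \to \big((k(a)-1)\mu + \text{contribution of the }k(a)\text{ hubs}\big)/(C^3 H)$ converges in probability to a deterministic limit — but here the limit is precisely $K_{k(a)}(z_1,\ldots,z_{k(a)})/(C^3H)$ after accounting correctly for the $m_n$ worth of ``background'' triangles. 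More precisely: the small-weight triple integral gives $m_n(1+o(1)) = C^3Hn(1+o(1))$; the integrals with exactly one large coordinate at $z_j n$ and two small coordinates give, by Lemma \ref{lemma-sb} and the truncation identity \eqref{eq-truncatedmean}, a term $n \cdot \frac{1}{2\mu}\big(\frac{z_j}{\mu}\Exp{W^2 I(W \le \mu/z_j)} + \Exp{W I(W > \mu/z_j)}\big)^2$; and integrals with two large coordinates $z_i n, z_j n$ and one small give $n \cdot \Exp{\min\{\tfrac{z_i}{\mu}W,1\}\min\{\tfrac{z_j}{\mu}W,1\}}$ by Lemma \ref{lemma-sbc}; terms with three large coordinates are $O(1) = o(n)$. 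Summing, $G_n/n \to C^3H + K_{k(a)}(z_1,\ldots,z_{k(a)})$ in probability, so $\{G_n > (1+a)m_n\}$ occurs with probability tending to $\mathbbm{1}\{K_{k(a)}(z_1,\ldots,z_{k(a)}) \ge C^3Ha\}$ at continuity points. The upper bound uses $\bar F_n \le \bar F_n^*$ as before; the lower bound uses $\bar F_n \ge (1-A)\bar F$ on $[0,a(n)]$ together with monotonicity of $f_n$, exactly as in the proof of Proposition \ref{prop-wlln}.

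Finally I would assemble the pieces. Writing $p_n = \Prob{W > \eta(a)n}$, the number of ways to choose which $k(a)$ of the $n$ weights are the hubs gives a factor $\binom{n}{k(a)} \sim n^{k(a)}/k(a)!$, and on $D_n^{(k)}$ the rescaled hub weights $(W_{n-j+1}/(\varepsilon\eta(a)n))$ converge jointly in distribution to i.i.d.\ Pareto$(\alpha)$ variables on $[1,\infty)$ — equivalently $(W_{n-j+1}/n) \approx (X_j^{\eta(a)})$ conditioned to exceed $\varepsilon \eta(a)$ — with the required probability normalization producing $p_n^{k(a)}$; a dominated-convergence / uniform-integrability argument (the integrand $\Prob{G_n > (1+a)m_n \mid D_n^{(k)}, \text{hub values}}$ is bounded by $1$, and the tail of the $K_{k(a)}$ functional in each $z_i$ is controlled using the regular variation established in Lemma \ref{lemma-can}) upgrades the pointwise limit to convergence of the integral, yielding $\Prob{G_n > (1+a)m_n} \sim \frac{1}{k(a)!}\,\prob(K_{k(a)}(X_1^{\eta(a)},\ldots,X_{k(a)}^{\eta(a)}) \ge C^3Ha)\,(np_n)^{k(a)}$; the $k(a)!$ is absorbed because $X_1^{\eta(a)},\ldots$ are exchangeable and $(n\Prob{W>\eta(a)n})^{k(a)}$ already counts ordered tuples — one must check the bookkeeping so the factorials cancel and the stated form \eqref{eq-asymptoticsboundary2} emerges. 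The main obstacle I anticipate is the second step: proving the conditional law-of-large-numbers limit $G_n/n \to C^3H + K_{k(a)}(z_1,\ldots,z_{k(a)})$ uniformly enough in $(z_1,\ldots,z_{k(a)})$ to justify interchanging limit and integral, since the hub weights $z_i$ range over an unbounded set and $K_{k(a)}$ grows like $\sum z_i^2$, so one needs quantitative control — via the concentration event $E_n(A,c,\delta)$ and the regular-variation estimates of Lemmas \ref{lemma-sbc} and \ref{lemma-sb} applied with sequences of index up to $1$ — on how large the hub weights can be before the contribution would already have forced $G_n$ above threshold with negligible probability. Pinning down that exactly $k(a)$ hubs (not fewer, not more) are relevant is where the technical hypothesis $(k(a)-1)\mu + (k(a)-1)(k(a)-2)/2 < aC^3H$ is essential, and handling the boundary index-$1$ regular variation without the slowly varying functions oscillating is precisely why the assumption $\Prob{W>x} \sim Cx^{-4/3}$ is imposed.
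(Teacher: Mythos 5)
Your proposal follows essentially the same route as the paper: rule out fewer than $k(a)$ hubs via the $F_n^*$/ten-term machinery adapted to $\alpha=4/3$ (the paper's Proposition \ref{prop-keypropositionintermediatetau}, which bounds each hub's contribution by $n\mu$ and each pair's by $n$), rule out more via a binomial bound, prove the law of large numbers $G_{n,l}(z_1,\ldots,z_l)/n \to K_l(z_1,\ldots,z_l)$ (Proposition \ref{prop-hublimit}), and then condition on the rescaled hub weights and pass to the limit by bounded convergence, using that $K_{k(a)}$ cannot reach the threshold unless every argument exceeds $\eta(a)$ to convert the $\varepsilon$-truncation into the stated $\eta(a)$-normalization. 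Your flagged concerns — uniform control in the $z_i$ and the $k(a)!$ bookkeeping between $\binom{n}{k(a)}$ and $(n\Prob{W>\eta(a)n})^{k(a)}$ — are real but are exactly the points the paper also treats only briefly ("straightforward combinatorial arguments"), so they do not constitute a gap relative to the published argument.
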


%
%
A key step towards proving Theorem \ref{thm-gnboundary} is to show that $k(a)$ hubs are really needed. Its proof is a refinement of (and builds on) the arguments developed in the case $\alpha>4/3$.

\begin{proposition}
\label{prop-keypropositionintermediatetau}
For every $\beta > 0$ there exists an $\varepsilon >0$ such that
    \begin{equation}
        \Prob{ G_n > m_n(1+a), L_n(\varepsilon n) < k(a)} = o(n^{-\beta}).
    \end{equation}
\end{proposition}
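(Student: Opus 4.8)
\textbf{Proof plan for Proposition \ref{prop-keypropositionintermediatetau}.}

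The plan is to mimic the structure of the proof of Proposition \ref{prop-onebigjumpneeded}, but now working at the scale $\varepsilon n$ rather than $\varepsilon c_a(n)$ and keeping track of \emph{up to $k(a)-1$} hubs rather than zero hubs. First I would condition on $\{L_n(\varepsilon n) = \ell\}$ for each $\ell \in \{0,1,\dots,k(a)-1\}$ separately; since $\ell$ is bounded by the constant $k(a)-1$, it suffices to obtain, for each such fixed $\ell$, a bound of the form $\Prob{G_n > m_n(1+a),\, L_n(\varepsilon n)=\ell} = o(n^{-\beta})$. By exchangeability of the weights I may assume the $\ell$ large weights are $W_{n-\ell+1},\dots,W_n$, each lying in $(\varepsilon n, \infty)$, and all other weights are below $\varepsilon n$. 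On the event $E_n(A,c,\delta)$ of Proposition \ref{prop-empiricalconcentration} (now with the top atom placed at $\varepsilon n$ instead of $\varepsilon c_a(n)$), the empirical tail $\bar F_n$ restricted to $[0,\varepsilon n]$ is dominated by $\bar F_n^*$ as in \eqref{eq-deffnstar}, so that $G_n$ is bounded above by the contribution of the $\ell$ genuine hubs plus the ``pseudo-hub'' contributions coming from the atoms of $F_n^*$ at $b(n)$ and at $\varepsilon n$.

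The key estimate is then to expand $G_n$ into a sum over the types of the three vertices of a triangle: each vertex is either (i) small, i.e.\ below $a(n)$, contributing via $\bar F$; (ii) a $b(n)$-atom; (iii) an $\varepsilon n$-atom; or (iv) one of the $\ell$ true hubs of weight $W_{n-i+1} \le$ something of order $n$. Triangles with no true hub and no $\varepsilon n$-atom sum to $(1+o(1))m_n$ by \eqref{eq-truncatedmean}; triangles involving a $b(n)$-atom are $o(m_n)$ by Lemmas \ref{lemma-sb} and \ref{lemma-sbc} exactly as Terms 2,4,6 in \eqref{eq-tenterms} (using $\alpha=4/3$ so that $b(n)$ is regularly varying of index $3/4+\delta < 1$); triangles with one or two $\varepsilon n$-atoms contribute $O(\varepsilon^{\text{const}}) m_n$ and can be absorbed by choosing $\varepsilon$ small; and triangles with three $\varepsilon n$-atoms or three true hubs contribute $O(n^{3\alpha\delta})=o(m_n)$. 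The crucial terms are those involving the $\ell$ true hubs together with small vertices: a single true hub of weight $z_i n$ with two small vertices contributes, via Lemma \ref{lemma-sb} at scale $z_i n$ and using $\alpha = 4/3$, an amount $\sim m_n \cdot (\text{something bounded by }z_i)$, while a pair of true hubs with one small vertex contributes $\sim m_n \cdot \Exp{\min\{z_i W/\mu,1\}\min\{z_j W/\mu,1\}}$ — precisely the two groups of terms assembled in $K_\ell(z_1,\dots,z_\ell)$. Since $z_i \le \varepsilon'$ for some $\varepsilon'$ tending to $0$ with $\varepsilon$ (on the event that the hubs are not themselves of size $\Theta(n)$, which I handle as a separate negligible branch as in Section \ref{sec:prooftaularge}), and since $\ell \le k(a)-1$, one checks $K_\ell(z_1,\dots,z_\ell) \le (k(a)-1)\mu + (k(a)-1)(k(a)-2)/2 + o(1)$, which by the standing hypothesis $(k(a)-1)\mu + (k(a)-1)(k(a)-2)/2 < aC^3H$ is strictly less than $aC^3H$. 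Hence on $E_n(A,c,\delta)\cap\{L_n(\varepsilon n)=\ell\}$ one gets $G_n \le m_n(1+A)(1 + (k(a)-1)\mu/(C^3H) + (k(a)-1)(k(a)-2)/(2C^3H) + c\varepsilon^{\text{const}} + o(1))$, which is $< (1+a)m_n$ once $A,\varepsilon$ are small. Therefore $\Prob{G_n > m_n(1+a),\, L_n(\varepsilon n)=\ell,\, E_n(A,c,\delta)} = 0$ for large $n$, and combining over $\ell < k(a)$ with $\Prob{E_n(A,c,\delta)^c} = o(n^{-\beta})$ (choose $\lceil c\rceil$ large by Proposition \ref{prop-empiricalconcentration}) yields the claim.

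The main obstacle I expect is \emph{controlling the weights of the $\ell \le k(a)-1$ genuine hubs}: unlike the $\varepsilon c_a(n)$-truncation argument in Proposition \ref{prop-onebigjumpneeded}, here a hub above $\varepsilon n$ may have weight as large as $\Theta(n)$ (in fact larger), and a hub of weight exactly of order $n$ already saturates all its connection probabilities, contributing a full $\mu n$ triangles — so the bound $K_\ell \le (k(a)-1)\mu + (k(a)-1)(k(a)-2)/2$ is tight and must be shown not to be exceeded. This requires a careful two-sided argument: one must either bound the hub weights away from the regime where the relevant $\min\{\cdot,1\}$ terms saturate prematurely, or else absorb the saturated case into the definition of $K_\ell$ and invoke that $K_\ell$ is maximized (over the relevant region) at the ``all hubs infinite'' configuration, whose value is exactly $(k(a)-1)\mu + (k(a)-1)(k(a)-2)/2$. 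Making the passage from the random $G_n$ to these deterministic integral bounds uniform in the (random) hub weights, while the hub weights themselves range over an unbounded set, is the delicate point and is where the hypothesis on $k(a)$ is genuinely used.
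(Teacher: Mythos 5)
Your plan is essentially the paper's proof: fix $\ell<k(a)$, control the no-hub part of $G_n$ by rerunning the Proposition \ref{prop-onebigjumpneeded} machinery at scale $\varepsilon n$ (re-examining the terms that used $\alpha>4/3$), and bound the contribution of the $\ell$ genuine hubs by its value at infinite hub weight, namely $\ell\mu+\ell(\ell-1)/2$, which is below $aC^3H$ by the standing hypothesis; the paper implements this last step by replacing $W_{n+i}$ with $\infty$ in \eqref{eq-w-infty} and invoking the sample-mean concentration \eqref{rs99}. One sentence in your middle paragraph is internally inconsistent --- the hubs satisfy $z_i>\varepsilon$ and are unbounded above, so the claim ``$z_i\le\varepsilon'$ tending to $0$'' cannot hold --- but your final paragraph correctly identifies this and resolves it exactly as the paper does, via monotonicity and the ``all hubs infinite'' bound.
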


\begin{proof} 
   Fix $l<k(a)$ and observe that $G_n\leq G_{n+l}$.
Note that 
\begin{equation}
\label{eq-gnub0}
    \Prob{ G_{n+l} > m_n(1+a) ; L_n(\varepsilon n) =l} = \binom{n+l}{l}  \Prob{ G_{n+l} > m_n(1+a) ; W_i< \varepsilon n \mbox{ iff } i\leq n}.
\end{equation}
On the event  $\{W_i< \varepsilon n \mbox{ iff } i\leq n\}$ we can write
\begin{align}
\label{eq-boundarydecomposition}
    G_{n+l} & = G_n + n^2\sum_{i=1}^l \int_0^{\varepsilon n} \int_x^{\varepsilon n} f_n(x,y,W_{n+i}) dF_n(y) dF_n(x)\nonumber\\
    & \quad + n\sum_{i=1,j>i}^l \int_0^{\varepsilon n} f_n(x, W_{n+i}, W_{n+j})dF_n(x).
\end{align}
   We wish to use Proposition \ref{prop-onebigjumpneeded} with $\varepsilon c_a(n)$ replaced by $\varepsilon n$ to show that $G_n$ is sufficiently close to $m_n$ with high probability, 
   but in the analysis of Term 5 in (\ref{eq-tenterms}) we assumed that $\alpha>4/3$. 
   For $\alpha=4/3$, this term behaves as 
\begin{align*}
       n^3 (c/n)^2 \int_0^\infty f_n(x,\varepsilon n,\varepsilon n) dF(x) &\sim n c^2  \int_0^\infty \min \{x,\varepsilon/\mu\}^2 d F(x) \leq \varepsilon^2 nc^2/\mu
\end{align*}
which is negligible as $\varepsilon \downarrow 0$. In addition, Term 3 in (\ref{eq-tenterms}) behaves like $n C_1(\varepsilon)$ when $\alpha=4/3$ and $c_a(n)$ is replaced by $\varepsilon n$ which is negligible since $C_1(\varepsilon) \rightarrow 0$ as $\varepsilon \downarrow 0$.
Based on this, and repeating the arguments in the other terms in (\ref{eq-tenterms}) for $\alpha=4/3$ and $c_a(n)$ replaced with $\varepsilon n$, we obtain that
for every $\beta<\infty$ and $\eta>0$ there exists a $c>0$ and $\varepsilon>0$ such that 
\begin{equation}
\label{eq-boundarygnub}
     \Prob{ G_{n} > m_n(1+\eta) ; W_i< \varepsilon n, i\leq n} = o(n^{-\beta}).
\end{equation}
We now analyze the second term in (\ref{eq-boundarydecomposition}).
By bounding $W_{n+i}$ with $\infty$ we get
\begin{equation}
\label{eq-w-infty}
    n^2 \int_0^\infty \int_x^\infty f_n(x,y,W_{n+i}) dF_n(y) dF_n(x) \leq \frac{n}{\mu} \Big(\int_0^{\varepsilon n} xdF_n(x)\Big)^2\leq n\mu.
\end{equation}
Combining this bound with the estimate (\ref{rs99}) implies that for every $\beta<0$ and $\eta>0$ there exists an $\varepsilon>0$ such that
\begin{equation}
\label{eq-boundarygnub2}
\Prob{n^2\sum_{i=1}^l \int_0^{\varepsilon n}\int_x^{\varepsilon n} f_n(x,y,W_{n+i}) dF_n(x) dF_n(y) > n(l\mu + \eta);  W_i< \varepsilon n, i\leq n} = o(n^{-\beta}). 
\end{equation}
The proof is now finished by bounding the last term in (\ref{eq-boundarydecomposition}) by $nl(l-1)/2$, and combining it with (\ref{eq-boundarygnub}), (\ref{eq-boundarygnub2}), and (\ref{eq-gnub0}), noting that
$l \mu + l(l-1)/2 <  a C^3 H $ since $l<k(a)$. 
\end{proof}
Using a simple tail bound for binomial distributions we obtain 
\begin{proposition}
\begin{equation}
     \Prob{ G_n > m_n(1+a), L_n(\varepsilon n) > k(a)} 
     = o(n^{k(a)} \Prob{W>n}^{k(a)}).
\end{equation}
    \end{proposition}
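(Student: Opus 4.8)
The plan is to simply discard the event $\{G_n > m_n(1+a)\}$ and bound the probability of the intersection by $\Prob{L_n(\varepsilon n) > k(a)}$, turning the claim into a pure statement about how many of the i.i.d.\ weights $W_1,\dots,W_n$ exceed the threshold $\varepsilon n$. Since $L_n(\varepsilon n)$ has a $\mathrm{Binomial}(n,\bar F(\varepsilon n))$ distribution, it suffices to estimate the upper tail $\Prob{L_n(\varepsilon n) \geq k(a)+1}$.

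First I would invoke the elementary binomial tail bound already used elsewhere in the paper, e.g.\ \cite[Lemma 2.3]{stegehuis2022scale}, which gives $\Prob{L_n(\varepsilon n) \geq k(a)+1} \leq e^{n\bar F(\varepsilon n)}\,(n\bar F(\varepsilon n))^{k(a)+1}$; the cruder union bound $\binom{n}{k(a)+1}\bar F(\varepsilon n)^{k(a)+1} = O\big((n\bar F(\varepsilon n))^{k(a)+1}\big)$ would do equally well. Either way the relevant factor is $(n\bar F(\varepsilon n))^{k(a)+1}$. Next I would use that $\bar F$ is regularly varying of index $-\alpha$, so $\bar F(\varepsilon n)\sim \varepsilon^{-\alpha}\bar F(n)$ and hence $n\bar F(\varepsilon n)\sim \varepsilon^{-\alpha}\, n\bar F(n)$; moreover the prefactor $e^{n\bar F(\varepsilon n)}$ tends to $1$ since $n\bar F(n)=n^{1-\alpha}L(n)\to 0$ because $\alpha>1$. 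This yields $\Prob{L_n(\varepsilon n) > k(a)} = O\big((n\bar F(n))^{k(a)+1}\big)$.

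Finally, since $n\bar F(n)\to 0$, one writes $(n\bar F(n))^{k(a)+1} = (n\bar F(n))\cdot(n\bar F(n))^{k(a)} = o\big((n\bar F(n))^{k(a)}\big) = o\big(n^{k(a)}\Prob{W>n}^{k(a)}\big)$, which is precisely the asserted bound. I do not expect any genuine obstacle here: the argument is a one-line binomial tail estimate combined with the observation that $n\bar F(n)$ vanishes. The only point needing a word of care is that the $\varepsilon$-dependent constant $\varepsilon^{-\alpha(k(a)+1)}$ is harmless, which is immediate because $\varepsilon$ has already been fixed (once and for all) in Proposition \ref{prop-keypropositionintermediatetau}, so it does not interfere with the $o(\cdot)$ in $n$.
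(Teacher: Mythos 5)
Your argument is correct and is exactly what the paper does: the paper's entire proof is the remark ``Using a simple tail bound for binomial distributions we obtain [the proposition],'' and your write-up simply fills in the intended details (drop the $G_n$ event, bound $\Prob{L_n(\varepsilon n)\geq k(a)+1}$ by $O\big((n\bar F(\varepsilon n))^{k(a)+1}\big)$, and use regular variation plus $n\bar F(n)\to 0$ for $\alpha>1$ to absorb the extra factor and the fixed $\varepsilon$). No issues.
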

Define
\begin{align}
\label{eq-additional-exp-triangles-multiple}
    G_{n,l}(z_1,...,z_l) & = n^2 \sum_{i=1}^l \int_0^\infty\int_x^\infty  f_n(x,y,nz_i) dF_n(x) dF_n(y) \nonumber\\
    & \quad + n\sum_{i=1,j>i}^l \int_0^\infty f_n(x,nz_i,nz_j) dF_n(x). 
\end{align}
Our next proposition is the final main ingredient of the proof of Theorem \ref{thm-gnboundary}.

\begin{proposition}
\label{prop-hublimit}
The following convergence holds in probability:
    \begin{equation}
        \frac 1n G_{n,l} (z_1,...,z_l) \rightarrow  K_l(z_1,...,z_l).
    \end{equation}
\end{proposition}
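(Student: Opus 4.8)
The plan is to use that the empirical measure $F_n$ is purely atomic to rewrite $\tfrac1n G_{n,l}(z_1,\dots,z_l)$ as a finite sum of empirical averages, and then apply a law of large numbers to each piece; the heavy tail of $W$ will force me to treat the double-integral terms by hand rather than invoke a $U$-statistic limit theorem. Throughout, take $W_1,W_2,\dots$ to be a fixed i.i.d.\ sequence with law $F$, and write $\psi_i(w)=\min\{z_iw/\mu,1\}\in[0,1]$. Because the weights are a.s.\ distinct and $f_n$ is symmetric in its first two arguments, the representation (\ref{eq-additional-exp-triangles-multiple}) becomes
\begin{align*}
\frac1n G_{n,l}(z_1,\dots,z_l)
&= \frac1{2n}\sum_{i=1}^l\sum_{a\neq b}\min\Big\{\tfrac{W_aW_b}{\mu n},1\Big\}\psi_i(W_a)\psi_i(W_b)\\
&\quad+\frac1n\sum_{1\le i<j\le l}\min\Big\{\tfrac{nz_iz_j}{\mu},1\Big\}\sum_{a=1}^n\psi_i(W_a)\psi_j(W_a),
\end{align*}
and I will show the first line converges in probability to $\tfrac1{2\mu}\sum_{i=1}^l\big(\tfrac{z_i}{\mu}\Exp{W^2I(W\le\mu/z_i)}+\Exp{WI(W>\mu/z_i)}\big)^2$ and the second to $\sum_{i<j}\Exp{\psi_i(W)\psi_j(W)}$; together these equal $K_l(z_1,\dots,z_l)$.

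The second line I would handle first, as it is immediate: since each $z_i$ is a fixed positive constant, for all $n\ge\mu/(z_iz_j)$ the factor $\min\{nz_iz_j/\mu,1\}$ equals $1$, so the $(i,j)$-summand reduces to the sample average $\tfrac1n\sum_{a=1}^n\psi_i(W_a)\psi_j(W_a)$ of i.i.d.\ $[0,1]$-valued random variables, which by the law of large numbers converges a.s.\ to $\Exp{\psi_i(W)\psi_j(W)}=\Exp{\min\{z_iW/\mu,1\}\min\{z_jW/\mu,1\}}$, the $(i,j)$ term of the second sum in $K_l$.

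For the first line I would work one index $i$ at a time. The first step is to replace $\min\{W_aW_b/(\mu n),1\}$ by the linear term $W_aW_b/(\mu n)$: the difference $(\tfrac{W_aW_b}{\mu n}-1)^+$ is nonnegative, and since $\psi_i\le1$ the resulting error has expectation at most $\tfrac1{2\mu}\Exp{W_1W_2\,I(W_1W_2>\mu n)}$, which tends to $0$ because $W_1W_2$ has finite mean $\mu^2$; hence the error is $o_\prob(1)$. What remains is the quadratic form
\[
\frac1{2\mu n^2}\sum_{a\neq b}\big(W_a\psi_i(W_a)\big)\big(W_b\psi_i(W_b)\big)
=\frac1{2\mu}\Big(\frac1n\sum_{a=1}^n W_a\psi_i(W_a)\Big)^2-\frac1{2\mu n^2}\sum_{a=1}^n\big(W_a\psi_i(W_a)\big)^2 .
\]
The diagonal correction is negligible: $(W_a\psi_i(W_a))^2\le W_a^2$ and $\tfrac1{n^2}\sum_{a=1}^nW_a^2\le\tfrac{\max_b W_b}{n}\cdot\tfrac1n\sum_{a=1}^nW_a$, where $\tfrac1n\sum_aW_a\to\mu$ a.s.\ and $\prob(\max_b W_b>\varepsilon n)\le n\bar F(\varepsilon n)\to0$ for every $\varepsilon>0$. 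For the leading square, $W_a\psi_i(W_a)\le W_a$ is integrable, so $\tfrac1n\sum_{a=1}^n W_a\psi_i(W_a)\to\Exp{W\psi_i(W)}$ a.s.; splitting this expectation at $W=\mu/z_i$ gives $\Exp{W\psi_i(W)}=\tfrac{z_i}{\mu}\Exp{W^2I(W\le\mu/z_i)}+\Exp{WI(W>\mu/z_i)}$. Thus the $i$-th double-integral contribution converges in probability to $\tfrac1{2\mu}\big(\tfrac{z_i}{\mu}\Exp{W^2I(W\le\mu/z_i)}+\Exp{WI(W>\mu/z_i)}\big)^2$; summing over the finitely many $i$ and adding the second line completes the proof.

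The main obstacle will be this first line. The kernel $\min\{W_aW_b/(\mu n),1\}\psi_i(W_a)\psi_i(W_b)$ has an argument, namely $W_a\psi_i(W_a)$ — which equals $W_a$ as soon as $W_a\ge\mu/z_i$ — with infinite second moment, so no off-the-shelf law of large numbers for $U$-statistics applies. The point of keeping the prefactor $1/(\mu n)$ attached to the product is that it makes the genuinely quadratic leftovers (the diagonal term and the tail of the $\min$-truncation) vanish on their own, leaving only the sample mean of the integrable variable $W\psi_i(W)$, to which the ordinary law of large numbers does apply.
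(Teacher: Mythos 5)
Your proof is correct, and it reduces the problem to the same two ingredients as the paper's argument: the cross term collapses to $\tfrac{1}{2\mu}\bigl(\tfrac1n\sum_a W_a\psi_i(W_a)\bigr)^2$ plus negligible corrections, and the sample mean of the integrable variable $W\psi_i(W)$ converges to $\tfrac{z_i}{\mu}\Exp{W^2I(W\le\mu/z_i)}+\Exp{WI(W>\mu/z_i)}$, while the hub--hub terms are averages of bounded variables. Where you genuinely diverge is in how the truncation $\min\{xy/\mu,n\}$ versus $xy/\mu$ is handled. The paper proves an upper bound by the crude inequality $\min\{xy/\mu,n\}\le xy/\mu$ and then a separate lower bound by replacing $n$ with a fixed level $K$, invoking $F_n\to F$ and Fatou, and letting $K\to\infty$. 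You instead bound the replacement error directly in first moment by $\tfrac1{2\mu}\Exp{W_1W_2\,I(W_1W_2>\mu n)}\to0$ (uniform integrability of the product of two independent copies of $W$), which gives both directions at once via Markov's inequality and avoids the two-sided sandwich entirely. You also make explicit the removal of the diagonal term $a=b$, controlled by $\max_b W_b=o_\prob(n)$, a point the paper passes over silently since its integral is taken over the ordered region $x<y$. The net effect is a somewhat more self-contained and quantitative argument for the same limit; the paper's route is shorter on the page but leans on the unproved statement ``$F_n\to F$'' in a form strong enough to pass to the limit in the truncated integrals.
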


\begin{proof} 
Using definition~\eqref{eq:fn}, write the $i$th term of the first part of $ G_{n,l}(z_1,...,z_l)/n$ as 
\begin{align*}
 \int_0^\infty \int_x^\infty  \min \{xy/\mu, n\} \min \{xz_i/\mu, 1\} \min \{yz_i/\mu, 1\}dF_n(y) dF_n(x).
\end{align*}
We  proceed by analyzing upper and lower bounds.
For the upper bound, use $\min \{xy/\mu, n\} \leq xy/\mu$ to get the upper bound 
$(\int_0^\infty x\min \{xz_i/\mu, 1\} dF_n(x) )^2/\mu$. Furthermore,
\begin{align*}
    \int_0^\infty x\min \{xz_i/\mu, 1\} dF_n(x) = \int_{0}^{\mu/z_i} ((z_i/\mu)x^2 -x)dF_n(x) +\int_{0}^\infty x dF_n(x).
\end{align*}
The first integral converges as $F_n\rightarrow F$, and the second integral converges due to the weak law of large numbers. 
The limit equals $(z_i/\mu)\Exp{W^2I(W\leq \mu/z_i)}+\Exp{WI(W>\mu/z_i)}$. This leads to the desired upper bound. 

A lower bound follows by bounding $\min \{xy/\mu, n\}$
from below by $\min \{xy/\mu, K\}$, using that $F_n\rightarrow F$ and Fatou's lemma, and then take $K$ arbitrarily large. 
Finally, note that each term in the second part of  $ G_{n,l}(z_1,...,z_l)/n$ converges to its desired limit using the bounded convergence theorem. 
\end{proof}

\begin{proof}[Proof of Theorem \ref{thm-gnboundary}]
Abbreviate $k=k(a)$.
Using straightforward combinatorial arguments, it suffices to show that 
    \begin{align}
    \label{eq-sufficesboundary}
        & \Prob{G_{n,l}(W_{n+1},...,W_{n+k}) > m_n a,  W_i< \varepsilon n \mbox{ iff } i\leq n}\nonumber\\
        & \quad \sim 
                 \prob( K_{k(a)}(X_1^{\eta(a)}, \ldots, X_{k(a)}^{\eta(a)}) \geq C^3Ha) (\Prob{W> \eta(a) n})^{k(a)}.
    \end{align}
We now write the probability on the LHS of the last display as
\begin{align}
    & \int_{(\varepsilon, \infty)^k} \Prob{  G_{n,l}(z_1,\ldots,z_k) >  m_n a ; W_i<\varepsilon n, i\leq n} d\prod_{i=1}^k \Prob{\frac{W_i}{n} \leq z_i \mid W_i >\varepsilon n} \nonumber \\
    & \times    \Prob{W_1>\varepsilon n}^k  .
\end{align}
Now $\Prob{W_i/n \leq z_i \mid W_i >\varepsilon n}$ converges to the continuous distribution $\Prob{X_i^\varepsilon\leq x_i}$. Recalling that $m_n \sim n C^3 H$, and applying Proposition 
\ref{prop-hublimit},
we obtain that the integral in the last display converges to $\Prob{ K_{k(a)}(X_1^\varepsilon, \ldots, X_{k(a)}^\varepsilon) \geq C^3 H a}$.

Because $(k(a)-1) \mu + (k(a)-1)(k(a)-2)/2 <  a C^3 H$, $K_{k(a)}(\varepsilon, \infty,\ldots,\infty)=0$ for all $\varepsilon<\eta(a)$. Since $K_{k(a)}$ is symmetric, a similar property holds for the other coordinates.
Therefore, if $\varepsilon<\eta(a)$,
\begin{equation}
\prob( K_{k(a)}(X_1^\varepsilon, \ldots, X_{k(a)}^\varepsilon) \geq a) = (\eta/\varepsilon)^{-k(a)\alpha}\prob( C(X_1^{\eta(a)}, \ldots, X_{k(a)}^{\eta(a)}) \geq a).
\end{equation}
Furthermore, by regular variation, 
\begin{equation}
    \prob(W_1>\varepsilon n)^{k(a)} \sim (\eta(a)/\varepsilon)^{k(a)\alpha}  \prob(W_1>\eta(a) n)^{k(a)}.
\end{equation}
Putting everything together, we conclude that (\ref{eq-sufficesboundary}) holds.
\end{proof}

\section{Completing the proofs of Theorem \ref{thm-triangle-singlebigjump} and \ref{thm-triangleboundary}}
\label{sec:prooftaularge}

In this section we use the precise tail asymptotics for $G_n$, $\alpha>4/3$ in Theorem \ref{thm-gn} and for $\alpha=4/3$ in Theorem \ref{thm-gnboundary}, to 
complete the proofs of Theorem \ref{thm-triangle-singlebigjump} and \ref{thm-triangleboundary}. Our argument will be based on the identity 
$G_n = E[ \triangle_n \mid W_1,...,W_n]$, an argument showing that $\triangle_n$ and $G_n$ are close, also in the rare event context we consider. 
Our proof is based on asymptotic upper and lower bounds, which are facilitated by two auxiliary lemmas. The first lemma is helpful for an asymptotic upper bound. 

 \begin{lemma}\label{lem:expconditionedtausmallnew}
	For any $\zeta>0$ there exists some $\varepsilon>0$, such that 
	\begin{equation}
	\Prob{\triangle_n \geq (1+\zeta) G_n} \leq \exp(-n^{\varepsilon}).
		\end{equation}
\end{lemma}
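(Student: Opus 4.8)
The plan is to show that $\triangle_n$ concentrates around its conditional mean $G_n = \Exp{\triangle_n \mid W_1,\ldots,W_n}$ with a stretched-exponential error, by first conditioning on the weights and then applying a concentration inequality for the triangle count in an inhomogeneous random graph with \emph{fixed} connection probabilities. Concretely, I would write
\begin{equation}
\Prob{\triangle_n \geq (1+\zeta) G_n} = \Exp{\Prob{\triangle_n \geq (1+\zeta) G_n \mid W_1,\ldots,W_n}},
\end{equation}
so it suffices to bound the inner probability on a high-probability event for the weights. Conditionally on the weights, $\triangle_n$ is a polynomial of degree $3$ in independent Bernoulli edge indicators with mean $G_n$, so a concentration bound for the upper tail of triangle counts (e.g.\ the result of Chatterjee \cite{chatterjee2012missing} already invoked in the paper, or a Kim--Vu / polynomial concentration bound) gives, for a fixed graph with edge probabilities $p_{ij}$,
\begin{equation}
\Prob{\triangle_n \geq (1+\zeta) \Exp{\triangle_n} \mid W} \leq \exp\!\big(-c(\zeta)\, \Phi_n\big),
\end{equation}
where $\Phi_n$ is the usual "gradient complexity" quantity controlling the exponent — roughly $\min\{\Exp{\triangle_n}^{2/3}, \text{edge-count}^{1/2}, \ldots\}$ in the Erd\H os--R\'enyi case, with the appropriate inhomogeneous analogue here. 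The key point is that on the bulk of the probability space — say on the event that the maximum weight is at most $n^{1/\alpha + \delta}$, which holds with probability $1 - o(1)$ and in fact with probability $1 - e^{-n^{\epsilon'}}$ failing only on a set we can absorb — the relevant complexity parameter $\Phi_n$ is polynomially large in $n$ (it is at least a small power of $m_n$, which is regularly varying of positive index), so the conditional bound is at most $\exp(-n^\varepsilon)$ for a suitable $\varepsilon>0$ depending on $\zeta$ and $\alpha$.

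The steps, in order: (i) fix the high-probability weight event $\mathcal{A}_n = \{\max_i W_i \leq n^{1/\alpha+\delta}\}$ and note $\Prob{\mathcal{A}_n^c} = n\bar F(n^{1/\alpha+\delta})(1+o(1))$, which is $o(\exp(-n^\varepsilon))$ only if we instead take $\mathcal A_n$ with a genuinely stretched-exponentially small complement — so more carefully I would split $\Prob{\triangle_n \geq (1+\zeta)G_n} \leq \Prob{\triangle_n \geq (1+\zeta) G_n, \mathcal A_n} + \Prob{\mathcal A_n^c}$ and observe that the second term is only polynomially small, hence I actually need $\mathcal A_n$ chosen so that $\triangle_n \geq (1+\zeta)G_n$ is impossible or stretched-exponentially unlikely even allowing one big hub; (ii) on $\mathcal A_n$, lower-bound the conditional complexity parameter $\Phi_n$ by a power of $n$, using that $G_n$ is (with high probability, by the WLLN-type estimates already in the paper) of order $m_n = n^{3-3\alpha/2}$, and that the maximal edge probability is bounded away from $1$ in a way that keeps the variance term large; (iii) invoke the chosen polynomial/triangle concentration inequality conditionally to get $\exp(-c(\zeta)\Phi_n) \leq \exp(-n^\varepsilon)$; (iv) integrate back over the weights.

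The main obstacle — and the place the argument needs the most care — is reconciling the stretched-exponential target $\exp(-n^\varepsilon)$ with the fact that the heavy-tailed weights can produce a single hub of weight of order $n$ with only \emph{polynomial} probability, which a priori swamps any $\exp(-n^\varepsilon)$ bound. The resolution must be that we do \emph{not} condition $\triangle_n$ against its unconditional mean $m_n$ but against $G_n$ itself: given the weights (hub or no hub), $G_n$ already incorporates that hub's contribution, so conditionally $\triangle_n/G_n$ concentrates regardless of how atypical the weights are, \emph{provided} the conditional complexity parameter is still polynomially large. Thus the real content is a uniform (over weight configurations with $\max_i W_i \le \varepsilon c_a(n)$ or $\le \varepsilon n$, the regime where the lemma is applied in Section~\ref{sec:prooftaularge}) lower bound on the conditional triangle-concentration exponent by a fixed power of $n$; establishing that uniformity, and checking that the concentration inequality of \cite{chatterjee2012missing} applies to the inhomogeneous model with the constants we need, is the technical heart of the proof.
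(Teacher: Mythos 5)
Your architecture is the same as the paper's: condition on the weights, apply the concentration inequality of \cite{chatterjee2012missing} to the triangle count against its conditional mean $G_n$, and then argue that the exponent is a positive power of $n$ uniformly over the relevant weight configurations. You also correctly diagnose why comparing to $G_n$ rather than $m_n$ neutralizes the hub problem. However, you explicitly leave the two quantitative ingredients unproven, and both are genuine gaps rather than routine verifications.

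First, the weight event. Your candidate $\mathcal{A}_n=\{\max_i W_i\leq n^{1/\alpha+\delta}\}$ has only polynomially small complement, as you note, and you do not replace it with a working alternative. The paper's resolution is that no upper bound on the weights is needed at all: since adding weight only increases $G_n$ and the conditional bound (Lemma \ref{lem-conditionedchatterjeetriangles}) holds for \emph{every} weight vector, the only event required is a \emph{lower} bound on $G_n$, obtained from concentration of the empirical distribution of the weights below $\sqrt{\mu n}$ (the event $\mathcal{E}_1=\{\sup_{x<\sqrt{\mu n}}|\bar F_n(x)/\bar F(x)-1|\leq\eta\}$). By the Wellner-type inequality \eqref{wellner1978} this event fails with probability at most $e^{-n^\varepsilon}$, and on it $G_n\geq \varepsilon_2 n^{3-3\alpha/2}L(\sqrt n)^3$ by restricting the triple integral to weights in $[a\sqrt{\mu n},\sqrt{\mu n}]$. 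This is the step you would need to find.

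Second, and more importantly, the claim that the conditional concentration exponent is ``polynomially large'' does not follow from a naive application of Lemma \ref{lem:chatterjeelem}. The exponent there is of order $t/a$ where $a$ bounds the loss $\sum_j X_j-\sum_j X_{j(i)}$, i.e.\ the number of triangles through a single edge. The trivial bound $a=3n$ gives exponent $G_n/n\sim n^{2-3\alpha/2}$, which tends to \emph{zero} for $\alpha>4/3$ --- exactly the regime where the lemma is used --- so the bound is vacuous. The paper's fix has two parts: (i) a preliminary estimate (Lemma \ref{lem:lowdegtriangleedges}) showing that, outside an event of probability $e^{-n^{\varepsilon}}$, every edge $\{i,j\}$ with $w_iw_j<\mu n$ lies in at most $n^{2-\alpha}$ triangles, so one may take $a=3n^{2-\alpha}$ and obtain exponent $G_n/n^{2-\alpha}\gtrsim n^{1-\alpha/2}>0$; and (ii) the observation that for edges with $w_uw_v\geq \mu n$ the edge is deterministically present, so triangles sharing such an edge are conditionally independent and one may set $X_{ijk(uvw)}=X_{ijk}$ there, keeping condition (b) of Lemma \ref{lem:chatterjeelem} intact without charging those overlaps to $a$. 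Without this codegree control your ``$\Phi_n$ is at least a small power of $m_n$'' is an assertion, not a proof, and the Kim--Vu-style exponent $\Exp{\triangle_n}^{2/3}$ you quote from the Erd\H{o}s--R\'enyi setting is not what Chatterjee's inequality delivers here.
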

 
We prove this lemma in the next subsection, using a recent concentration bound from~\cite{chatterjee2012missing}.
A crucial argument in the lower bound is to show that large hubs generate sufficiently many additional triangles. This is covered by the next lemma. 

\begin{lemma}
    \label{lem-lb-additionaltriangles}
    Let $\triangle_n(\delta,a)$ be the number of triangles $\{i,j,n\}$ in a graph with $n$ vertices where node $i,j<n$ have weight $W_i I(W_i\leq n^{1/2+\delta})$ and node $n$ weight $c_a(n)$.
The following convergence holds in probability for $\alpha\geq 4/3$, and $\delta>0$ such that $1/2+\delta<1/\alpha$:
\begin{equation}
\label{eq-additionaltrianglessinglehub}
 \triangle_n (\delta,a)/m_n \rightarrow 1+a.
\end{equation}
 Let $\triangle_{n,l}(\delta,z_1,...,z_l)$ be the number of triangles 
 in a graph with $n$ vertices where node $i\leq n-l$ has weight $W_i I(W_i\leq n^{1/2+\delta})$ and node $n-i+1, i=1,...,l$ has weight $nz_i.$
The following convergence holds in probability as $n\rightarrow\infty$ for $\alpha=4/3$, and $\delta>0$ such that $1/2+\delta<3/4$:
\begin{equation}
\label{eq-additionaltrianglesmultiplehubs}
    \triangle_{n,l}(\delta,z_1,...,z_l)/n \rightarrow C^3H+ K_l(z_1,...,z_l),
\end{equation}
with $K_l, H$ as in~\eqref{eq-def-k(a)},~\eqref{eq:h} and $C$ as in Theorem~\ref{thm:smalldevs}.
\end{lemma}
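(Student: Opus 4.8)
\textbf{Proof plan for Lemma \ref{lem-lb-additionaltriangles}.}
The plan is to prove both convergences by a second-moment (or weak law of large numbers) argument, conditioning on the weights of the hubs and then averaging over the remaining weights $W_i$ with $W_i\leq n^{1/2+\delta}$. First I would write $\triangle_n(\delta,a)$ as a sum of indicators over triples, splitting them according to whether they involve the hub node $n$ or not. The triangles not involving node $n$ contribute (after truncating the weights at $n^{1/2+\delta}$, which changes $m_n$ only by a $1+o(1)$ factor, exactly as in \eqref{eq-truncatedmean}) a term converging to $m_n$ in probability; this is essentially a restatement of Lemma \ref{lemma-mean} together with the truncated-mean estimate \eqref{eq-truncatedmean}. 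The triangles that do involve node $n$ contribute $\sum_{i<j<n} I(i\sim n)I(j\sim n)I(i\sim j)$, whose conditional expectation given the weights equals $n^2\int_0^{n^{1/2+\delta}}\int_x^{n^{1/2+\delta}} f_n(x,y,c_a(n))\,dF_n(y)\,dF_n(x)$. Using $F_n\to F$, Lemma \ref{lemma-sb} with $b(n)=c_a(n)$, and the defining property \eqref{definition-can} of $c_a(n)$, this conditional mean is $(1+o(1)) a m_n$; a variance bound (the conditional variance is of smaller order because the summands are only pairwise weakly dependent through the shared hub edges, and $f_n\le1$) then upgrades this to convergence in probability. Adding the two pieces gives \eqref{eq-additionaltrianglessinglehub}.

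For the multiple-hub statement \eqref{eq-additionaltrianglesmultiplehubs} the structure is identical but with more bookkeeping. I would decompose the triangles among nodes $\{1,\dots,n\}$ (truncated at $n^{1/2+\delta}$) plus the $l$ hubs $W_{n-i}=nz_i$ into: (i) triangles with no hub, contributing $\to nC^3H$ by Lemma \ref{lemma-mean} and \eqref{eq-truncatedmean} (here $m_n\sim C^3Hn$); (ii) triangles with exactly one hub $i$, whose conditional mean is $n^2\int\int f_n(x,y,nz_i)dF_n dF_n$, converging after dividing by $n$ to $\tfrac1{2\mu}\bigl(\tfrac{z_i}{\mu}\Exp{W^2I(W\le\mu/z_i)}+\Exp{WI(W>\mu/z_i)}\bigr)^2$ exactly as computed in the proof of Proposition \ref{prop-hublimit}; (iii) triangles with exactly two hubs $i,j$, whose conditional mean divided by $n$ converges to $\Exp{\min\{\tfrac{z_i}\mu W,1\}\min\{\tfrac{z_j}\mu W,1\}}$ by bounded convergence, again as in Proposition \ref{prop-hublimit}; and (iv) triangles with three hubs, which number at most $\binom l3$ and are therefore $O(1)=o(n)$. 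Summing (i)--(iii) reproduces exactly $C^3H+C_l(z_1,\dots,z_l)$. Each conditional mean is promoted to a convergence in probability by the same variance estimate as before, using that after truncation all weights are $o(n)$ so that no single additional term dominates.

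The main obstacle I anticipate is controlling the fluctuations around the conditional means, i.e.\ showing the variances are $o(m_n^2)$ (resp.\ $o(n^2)$). The subtlety is that with heavy-tailed weights even truncated at $n^{1/2+\delta}$, the integrals defining the relevant means are dominated by moderately large weights, and one has to check that the contribution of pairs of triangles sharing an edge (which is where the dependence and the dominant variance contribution sit) is genuinely of lower order; this is why the truncation level $1/2+\delta<1/\alpha$ (resp.\ $<3/4$) is imposed. Concretely I would bound $\Var$ by splitting pairs of triples by the size of their intersection and estimating each contribution by an integral against $F$, using $f_n\le1$ and regular variation of $\bar F$; the exponent constraint on $\delta$ is precisely what makes the shared-edge terms $o(m_n^2)$. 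A second, more routine, point to be careful about is that we must use the \emph{conditioned} empirical distribution of $W_1,\dots,W_{n-l}$ (given that they are all $\le n^{1/2+\delta}$), but since $\bar F(n^{1/2+\delta})\to 0$ this conditioning is asymptotically negligible and $F_n$ still converges weakly to $F$, as in the proof of Theorem \ref{thm-gnboundary}.
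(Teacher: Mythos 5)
Your first-moment analysis coincides with the paper's: both decompose $\triangle_n(\delta,a)$ (resp.\ $\triangle_{n,l}$) according to how many hubs a triangle contains, identify the no-hub part with the truncated mean $m_n(n^{1/2+\delta})\sim m_n$, and evaluate the hub contributions via Lemma \ref{lemma-sb}, the defining property \eqref{definition-can} of $c_a(n)$, and (for $\alpha=4/3$) the limits computed in Proposition \ref{prop-hublimit}. Where you genuinely diverge is in how the conditional means are upgraded to convergence in probability. You propose a direct variance computation, classifying pairs of triples by the size of their overlap and estimating each covariance by integrals against $F$. The paper instead avoids any overlap counting: it uses the weighted empirical-process bound \eqref{cbtail1} (this is where the hypothesis $1/2+\delta<1/\alpha$ enters, since it makes $n\bar F(n^{1/2+\delta})$ polynomially large and hence the failure probability in \eqref{eq-tailconcentration} stretched-exponentially small) to control the weight randomness, and the Chatterjee-type bound of Lemma \ref{lem-conditionedchatterjeetriangles} to control the edge randomness given the weights. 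Together these give $\triangle_n(\delta,a)\le(1+\zeta)^2(1+a)m_n$ outside an event of probability $e^{-n^{\varepsilon}}$, whence $\Exp{\triangle_n(\delta,a)^2}\le n^6 e^{-n^{\varepsilon'}}+(1+\zeta)^4(1+a)^2m_n^2$, and Chebyshev plus $\zeta\downarrow0$ finishes the proof. The paper's route buys you a second-moment bound essentially for free from machinery already established; your route is more self-contained but pushes all the work into the covariance estimates.

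That is also where the one real risk in your plan sits. The overlap covariances are not routine here: for pairs of triples sharing one vertex the dependence comes through the shared heavy-tailed weight, and for pairs sharing an edge one must handle the truncated minima in $f_n$ carefully (a crude bound $p(x,y)\le xy/(\mu n)$ on the shared edge, combined with truncation at $n^{1/2+\delta}$, is not obviously $o(m_n^2)$ for all $\alpha\in(4/3,2)$ under the stated constraint on $\delta$; one genuinely needs the caps in the minima). Your attribution of the hypothesis $1/2+\delta<1/\alpha$ to the shared-edge terms is therefore not quite the role it plays in the paper, and the step you yourself flag as the main obstacle is exactly the step that remains to be carried out. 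Since Lemma \ref{lem-conditionedchatterjeetriangles} and \eqref{cbtail1} are already available, I would recommend adopting the paper's shortcut for the second moment rather than completing the overlap computation.
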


\begin{proof}[Proof of Theorem \ref{thm-triangle-singlebigjump} and Theorem \ref{thm-triangleboundary}.]
The proofs of both theorems are similar. We first prove an asymptotic upper bound. Write for $0<\zeta<1$,
\begin{align*}
\Prob{\triangle_n > (1+a)m_n} &\leq \Prob{\triangle_n\geq (1+\zeta) G_n} + \Prob{ G_n/(1+\zeta) \geq \triangle_n \geq (1+a)m_n}\\
&\leq e^{-n^\varepsilon} + \Prob{G_n > (1+(a-\zeta)(1-\zeta))m_n}.
\end{align*}
Consequently, 
\[
\limsup_{n\rightarrow\infty} \frac{\Prob{\triangle_n > (1+a)m_n}}{\Prob{G_n > (1+a)m_n}}\leq 
\limsup_{n\rightarrow\infty} \frac{\Prob{G_n > (1+(a-\zeta)(1-\zeta))m_n}}{\Prob{G_n > (1+a)m_n}}
\]
The RHS of this expression converges to $1$ as $\zeta\downarrow 0$: for $\alpha>4/3$ this follows from Theorem \ref{thm-gn}, and for $\alpha=4/3$ this follows from Theorem \ref{thm-gnboundary},
in particular from (\ref{eq-asymptoticsboundary2}): under our assumptions, $k(\cdot)$ is continuous at $a$, and so are $\eta(a)$ and $\prob( K_{k(a)}(X_1^{\eta(a)}, \ldots, X_{k(a)}^{\eta(a)}) \geq C^3Ha)$.

We proceed with the proof of an asymptotic lower bound. 
First, consider $\alpha>4/3$, and write, for sufficiently large $n$
\begin{align*}
    \Prob{\triangle_{n} > (1+a)m_n} &\geq n \Prob{W_n > c_{a+\zeta}(n)} \Prob{\triangle_{n}(\delta,a+\zeta) > (1+a)m_n}.
\end{align*}
By Lemma \ref{lem-lb-additionaltriangles}, $\Prob{\triangle_{n}(\delta,a+\eta) > (1+a)m_n}\rightarrow 1$ for sufficiently small $\delta$,
 and we obtain
\begin{equation}
    \liminf_{n\rightarrow\infty} \frac{\Prob{\triangle_{n} > (1+a)m_n}}{\Prob{G_{n} > (1+a)m_n}} \geq \liminf_{n\rightarrow\infty}\frac{\Prob{W> c_{a+\zeta}(n)}}{\Prob{W> c_a(n)}}.
\end{equation}
The RHS converges to 1 as $\zeta\downarrow 0$ due to the  properties of $c_a(n)$ in Lemma \ref{lemma-can}.

Next, consider  $\alpha=4/3$. Write, for $\zeta>0$ and sufficiently large $n$
\begin{align*}
    \Prob{\triangle_{n} > (1+a)m_n} &\geq n^{k(a)} \Prob{K_{k(a)}(W_1/n,...,W_{k(a)}/n) > C^3H(a+\zeta)}\\
    & \hspace{1cm} \cdot \Prob{\triangle_{n} > (1+a)m_n \mid K_{k(a)}(W_1/n,...,W_{k(a)}/n) > C^3H(a+\zeta)}.
\end{align*}
By conditioning on $(W_1/n,...,W_{k(a)}/n)$ and applying the second part of Lemma \ref{lem-lb-additionaltriangles}, we see that (using 
 $m_n \sim C^3Hn$)
$\Prob{\triangle_{n} > (1+a)m_n \mid K_{k(a)}(W_1/n,...,W_{k(a)}/n) > C^3H(a+\zeta) }\rightarrow 1$. 
Since, for sufficiently small $\zeta>0$, $k(a)=k(a+\zeta)$,  we have that 
\begin{align}& \Prob{K_{k(a)}(W_1/n,...,W_{k(a)}/n) > C^3H(a+\zeta)} \nonumber\\& \sim 
 \prob( K_{k(a)}(X_1^{\eta(a+\zeta)}, \ldots, X_{k(a)}^{\eta(a+\zeta)}) \geq C^3H(a+\zeta)) (\Prob{W> \eta(a+\zeta) n})^{k(a)}.\end{align}
Consequently, using Theorem \ref{thm-gnboundary},
\begin{align}
      & \liminf_{n\rightarrow\infty} \frac{\Prob{\triangle_{n} > (1+a)m_n}}{\Prob{G_{n} > (1+a)m_n}} \nonumber\\
      & \geq \liminf_{n\rightarrow\infty}
      \frac{\prob( K_{k(a)}(X_1^{\eta(a+\zeta)}, \ldots, X_{k(a)}^{\eta(a+\zeta)}) \geq C^3H(a+\zeta)) (\Prob{W> \eta(a+\zeta) n})^{k(a)}}{\prob( K_{k(a)}(X_1^{\eta(a)}, \ldots, X_{k(a)}^{\eta(a)}) \geq C^3Ha) (\Prob{W> \eta (a) n})^{k(a)}}.
\end{align}
Since $\eta(\cdot)$ is continuous at $a$, the RHS converges to $1$ as $\zeta\downarrow 0$.
\end{proof}

\subsection{Proof of Lemma \ref{lem:expconditionedtausmallnew}}

The number of triangles $\triangle_n$ equals the sum of the indicators that $i,j,k$ forms a triangle over all $i,j,k$. In the proofs, we will often make use of a recent concentration bound from~\cite{chatterjee2012missing} to deal with the dependencies of the presences of different triangles, which we state here for completeness:
\begin{lemma}[Theorem~3.1 from~\cite{chatterjee2012missing}]\label{lem:chatterjeelem}
    Let $F$ be a finite set and $(X_i)_{i\in F}$ , $(X'_i)_{i\in F}$ $(X_{i(j)})_{i,j\in F}$ be
collections of nonnegative random variables with finite moment generating functions, defined on the same probability space, and satisfying the following conditions:
\begin{enumerate}[a)]
    \item  For all $i$, $X_i\leq  X'_i$.
    \item For all $i$, the random variables $X'_i$ and $\sum_{j\in F}X_{j(i)}$ are independent.
    \item For all $i$, $\sum_{j\in F}X_{j(i)}\leq \sum_{j\in F}X_{j}$.
    \item There is a constant $a$ such that for all $i$, when $X_i > 0$, we have
    \begin{equation}
        \sum_{j\in F}X_{j}\leq a+ \sum_{j\in F}X_{j(i)}.
    \end{equation}
\end{enumerate}
Let $\lambda = \sum_{j\in F}\Exp{X'_{j}}$. Then for any $t \geq\lambda$,
\begin{equation}
    \Prob{\sum_{i\in F}X_i\geq t}\leq\exp\Big(-\frac{t}{a}\log\Big(\frac{t}{\lambda}-1+\frac{\lambda}{t}\Big)\Big).
\end{equation}
\end{lemma}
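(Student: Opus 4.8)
\noindent
The plan is to dominate the moment generating function of $S := \sum_{i\in F} X_i$ by that of a scaled Poisson variable, and then optimize a Chernoff bound; this is exactly where the four hypotheses (a)--(d) enter. Write $T_i := \sum_{j\in F} X_{j(i)}$, $f(\theta) := \Exp{\me^{\theta S}}$ for $\theta \ge 0$, and recall $\lambda = \sum_{i\in F}\Exp{X_i'}$. Since $F$ is finite and all the variables involved have finite moment generating functions, H\"older's inequality keeps $f$ and each $\Exp{\me^{\theta T_i}}$ finite on the relevant range of $\theta$ and justifies differentiating under the expectation, so that $f(0)=1$ and $f'(\theta) = \sum_{i\in F}\Exp{X_i\,\me^{\theta S}}$.

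\noindent
The heart of the argument is a differential inequality for $f$. First I would prove, for every $\theta\ge 0$, the pointwise estimate $X_i\,\me^{\theta S} \le \me^{\theta a}\,X_i'\,\me^{\theta T_i}$: on the event $\{X_i>0\}$ hypothesis (d) gives $S\le a+T_i$, hence $\me^{\theta S}\le \me^{\theta a}\me^{\theta T_i}$, and then hypothesis (a) upgrades $X_i$ to $X_i'$; on $\{X_i=0\}$ the left-hand side vanishes so the inequality is trivial. Taking expectations, using the independence of $X_i'$ and $T_i$ from (b), and then $T_i\le S$ from (c) together with $\theta\ge 0$, gives
\begin{equation}
\Exp{X_i\,\me^{\theta S}} \le \me^{\theta a}\,\Exp{X_i'}\,\Exp{\me^{\theta T_i}} \le \me^{\theta a}\,\Exp{X_i'}\,f(\theta).
\end{equation}
Summing over $i\in F$ yields $f'(\theta)\le \lambda\,\me^{\theta a}\,f(\theta)$, i.e.\ $(\log f)'(\theta)\le \lambda\,\me^{\theta a}$, and integrating from $0$ (using $f(0)=1$) gives the Poisson-type moment generating function bound
\begin{equation}
\label{eq:poissonmgf}
\Exp{\me^{\theta S}} \le \exp\!\Big(\tfrac{\lambda}{a}\big(\me^{\theta a}-1\big)\Big), \qquad \theta\ge 0,
\end{equation}
which is precisely the moment generating function of $a$ times a $\mathrm{Poisson}(\lambda/a)$ random variable.

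\noindent
Finally I would feed \eqref{eq:poissonmgf} into Markov's inequality: for $\theta\ge0$, $\Prob{S\ge t}\le \exp\!\big(-\theta t + \tfrac{\lambda}{a}(\me^{\theta a}-1)\big)$, and the exponent is convex in $\theta$, minimized at $\theta^\star = \tfrac1a\log(t/\lambda)$, which is nonnegative exactly because $t\ge\lambda$. Substituting $\theta^\star$ gives
\begin{equation}
\Prob{\sum_{i\in F}X_i\ge t} \le \exp\!\Big(-\tfrac1a\big(t\log\tfrac t\lambda - t + \lambda\big)\Big) = \exp\!\Big(-\tfrac{\lambda}{a}\,h(t/\lambda)\Big) = \exp\!\Big(-\tfrac ta\big(\log\tfrac t\lambda - 1 + \tfrac{\lambda}{t}\big)\Big),
\end{equation}
with $h$ as in \eqref{eq:h}, which is the asserted inequality. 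The main obstacle is the step producing the differential inequality: condition (d) is available only on $\{X_i>0\}$, so it is essential to retain the full factor $X_i$ (not merely $\ind{X_i>0}$) so that the pointwise bound degrades to $0$ off that event, while conditions (a), (b), (c) are precisely what is needed to turn $\Exp{X_i'\me^{\theta T_i}}$ back into a multiple of $f(\theta)$ and thereby close the recursion. Verifying that the differentiation under the expectation sign and the H\"older estimates are legitimate under the stated moment conditions is routine bookkeeping.
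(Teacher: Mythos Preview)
The paper does not prove this lemma; it is quoted verbatim from Chatterjee's paper and only applied. Your argument is correct and is precisely Chatterjee's original proof: derive the differential inequality $(\log f)'(\theta)\le \lambda\,\me^{\theta a}$ from hypotheses (a)--(d), integrate to the Poisson-type moment generating function bound, and optimize the Chernoff exponent at $\theta^\star=\tfrac1a\log(t/\lambda)$.

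One remark on the final display. You obtain
\[
\Prob{S\ge t}\le\exp\!\Big(-\tfrac{t}{a}\big(\log\tfrac{t}{\lambda}-1+\tfrac{\lambda}{t}\big)\Big),
\]
whereas the statement in the paper is typeset as $\exp\!\big(-\tfrac{t}{a}\log\big(\tfrac{t}{\lambda}-1+\tfrac{\lambda}{t}\big)\big)$, with the $-1+\lambda/t$ inside the logarithm. Your form is the correct Poisson Chernoff bound; the paper's parenthesization is a transcription slip (indeed, for i.i.d.\ Bernoulli$(1/n)$ summands with $a=1$, $\lambda=1$, $t=10$, the paper's expression would give roughly $2.7\times 10^{-10}$, below the Poisson limit probability $\approx 10^{-7}$, so it cannot be a valid upper bound). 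So your ``which is the asserted inequality'' is right in substance even though the two displayed expressions do not literally match.
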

We will apply this lemma to the number of triangles, where $X_{ijk(uvw)}$ deals with the dependence of the events that the triangle $ijk$ is present and the event that the triangle $uvw$ is present. To carry out this idea, we need several additional supporting results.

The second preparatory lemma shows that edges between vertices of low weights appear in relatively few triangles:
\begin{lemma}\label{lem:lowdegtriangleedges}
 Suppose that $1<\alpha<2$. Then, when 
 $K> n^{(2-\alpha)/2+\varepsilon}$ for some $\varepsilon>0,$
 \begin{equation}
     \Prob{\{i,j\} \text{ in }\geq K \text{ triangles} \mid W_iW_j<\mu n }\leq \exp(-c_1K),
 \end{equation}
 for some $c_1:=c_1(\alpha,\delta)>0$ and $n$ sufficiently large. 
\end{lemma}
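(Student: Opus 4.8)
The plan is to condition on the event $\{W_i W_j < \mu n\}$, fix the weights $W_i = h_i, W_j = h_j$ with $h_i h_j < \mu n$, and bound the number of triangles through edge $\{i,j\}$, which equals $N_{ij} := \sum_{k \neq i,j} I(\{i,k\} \text{ and } \{j,k\} \text{ present})$. Conditionally on all the weights, the summands are independent Bernoulli variables, so $N_{ij}$ is a sum of independent indicators with conditional mean $\sum_{k} p(h_i, W_k) p(h_j, W_k) \le \sum_k \frac{h_i W_k}{\mu n}\frac{h_j W_k}{\mu n} = \frac{h_i h_j}{(\mu n)^2}\sum_k W_k^2$. Here the point is that $\sum_k W_k^2$ is of order $n^{2/\alpha}$ (it is regularly varying; this is where $\alpha < 2$ and heavy tails enter), so after also averaging over $W_i, W_j$ on $\{W_iW_j<\mu n\}$ — where the constraint forces $h_ih_j\le \mu n$ — the mean of $N_{ij}$ is of order $\frac{1}{n}\cdot n^{2/\alpha} = n^{2/\alpha - 1} = n^{(2-\alpha)/\alpha}\le n^{(2-\alpha)/2}$ up to slowly varying corrections; in any case it is $o(K)$ once $K > n^{(2-\alpha)/2 + \varepsilon}$. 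I would make this precise by first intersecting with a high-probability event $\mathcal A_n = \{\sum_k W_k^2 \le n^{2/\alpha + \varepsil/2}\}$ (or similar), whose complement has polynomially small probability by a standard large-deviation bound for sums of heavy-tailed i.i.d.\ variables (or directly via the truncation/atom technique of Proposition \ref{prop-empiricalconcentration}).

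On $\mathcal A_n$ together with $\{W_iW_j<\mu n\}$, the conditional mean $\Lambda := \CExp{N_{ij}}{W_1,\dots,W_n}$ satisfies $\Lambda \le C n^{2/\alpha - 1 + \varepsilon/2} =: \Lambda_n$, which for $n$ large is smaller than $K/2$ when $K > n^{(2-\alpha)/2 + \varepsilon}$ (using $2/\alpha - 1 \le (2-\alpha)/2$ for $\alpha \in (1,2)$, with equality only in a limiting sense — one should check the inequality $2/\alpha - 1 < (2-\alpha)/2$ holds strictly for $\alpha \in (1,2)$, which it does since it rewrites as $(2-\alpha)(2-\alpha)/(2\alpha) > 0$; hence for any fixed $\varepsilon>0$ the gap absorbs the $\varepsilon/2$). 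Then a Chernoff bound for a sum of independent indicators gives $\CProb{N_{ij} \ge K}{W_1,\dots,W_n} \le e^{-K} (e\Lambda_n/K)^K \le e^{-c_1 K}$ once $K \ge 2e\Lambda_n$, say, with $c_1 = 1/2$ or any constant below $1$. Integrating this bound against the conditional law of the weights on $\{W_iW_j<\mu n\}\cap\mathcal A_n$, and adding $\Prob{\mathcal A_n^c} = o(n^{-\beta})$ (which is $\le e^{-c_1 K}$ for the relevant range of $K$, since $K$ is at least a positive power of $n$), yields the claim.

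The one subtlety — and the main thing to get right — is the interplay between the conditioning event $\{W_iW_j < \mu n\}$ and the averaging: on this event the individual weights $W_i, W_j$ can still be moderately large (e.g.\ $W_i$ close to $\sqrt{\mu n}$ with $W_j$ of constant order), so one cannot simply bound $h_i, h_j$ by constants. However, the bound on $\Lambda$ only uses the product $h_i h_j \le \mu n$, which is exactly what the conditioning guarantees, so the estimate $\Lambda \le \frac{\mu n}{(\mu n)^2}\sum_k W_k^2 = \frac{1}{\mu n}\sum_k W_k^2$ holds uniformly on the conditioning event and no further averaging over $(W_i,W_j)$ is needed. The other point requiring care is that $p(h_i,W_k)$ is a minimum with $1$, so the inequality $p(h_i,W_k)p(h_j,W_k) \le \frac{h_ih_jW_k^2}{(\mu n)^2}$ is valid without case distinction (dropping the $\min$ with $1$ only increases the bound); and for the Chernoff step one should note $N_{ij}$ is genuinely a sum of conditionally independent indicators once all weights are fixed. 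Thus I expect the proof to be short: establish $\mathcal A_n$, bound the conditional mean using only $h_ih_j \le \mu n$, apply a standard Poisson/Chernoff tail bound, and integrate.
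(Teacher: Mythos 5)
Your argument breaks down at two points, and the first is a sign error that is fatal for the stated range of $K$. You bound the conditional mean of $N_{ij}$ by $\frac{h_ih_j}{(\mu n)^2}\sum_k W_k^2\le\frac{1}{\mu n}\sum_k W_k^2\approx n^{2/\alpha-1}$ and then claim $2/\alpha-1<(2-\alpha)/2$ for $\alpha\in(1,2)$. The inequality goes the other way: $(2/\alpha-1)-(2-\alpha)/2=(2-\alpha)\bigl(\tfrac1\alpha-\tfrac12\bigr)=\frac{(2-\alpha)^2}{2\alpha}>0$, so your mean bound $n^{2/\alpha-1}$ \emph{exceeds} $n^{(2-\alpha)/2}$ strictly (e.g.\ $n^{1/3}$ versus $n^{1/4}$ at $\alpha=3/2$). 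Hence for $K$ just above $n^{(2-\alpha)/2+\varepsilon}$ with $\varepsilon<(2-\alpha)^2/(2\alpha)$ you have $\Lambda_n\gg K$ and the Chernoff step is vacuous. The loss comes precisely from dropping the $\min(\cdot,1)$: for $W_k$ near the typical maximum $n^{1/\alpha}$ the true success probability is capped at $1$ while $h_ih_jW_k^2/(\mu n)^2$ can be polynomially large, and since $\Exp{W^2}=\infty$ for $\alpha<2$ the sum $\sum_k W_k^2$ is hub-dominated, so this overshoot is not negligible. The paper avoids this by keeping the minima and computing $\int\min(w_iw_k/(\mu n),1)\min(w_jw_k/(\mu n),1)\,dF(w_k)$ explicitly over the three regions $w_k<\mu n/w_i$, $\mu n/w_i\le w_k<\mu n/w_j$, $w_k\ge\mu n/w_j$, then maximizing the result over $\{w_i\ge w_j,\ w_iw_j\le\mu n\}$ at $w_i=w_j=\sqrt{\mu n}$, which yields the correct per-trial probability of order $n^{-\alpha/2}$ and mean of order $n^{(2-\alpha)/2}$.

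The second gap is the treatment of the event $\mathcal A_n=\{\sum_kW_k^2\le n^{2/\alpha+\varepsilon/2}\}$. Because $W^2$ has tail index $\alpha/2<1$, the probability of $\mathcal A_n^c$ is governed by a single large weight and decays only polynomially in $n$; it is therefore much \emph{larger} than $e^{-c_1K}$ (which is stretched-exponentially small for $K\ge n^{\delta}$), not smaller as you assert. Adding $\Prob{\mathcal A_n^c}$ to your bound would leave you with a polynomial rather than an $\exp(-c_1K)$ estimate, so no choice of concentration event for $\sum_kW_k^2$ can rescue this route. The fix, which is what the paper does, is to not condition on the weights $W_k$, $k\ne i,j$, at all: given only $(W_i,W_j)=(w_i,w_j)$, the indicators that $k$ completes a triangle with $\{i,j\}$ are i.i.d.\ over $k$ (each involves its own $W_k$ and its own edge variables), so $N_{ij}$ is a genuine binomial with the deterministic success probability given by the integral above, and a single Chernoff bound finishes the proof with no residual bad event.
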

The proof of this lemma is based on  bounding the number of triangles for low-weight vertices from above with a binomial random variable with the right probability, and can be found in Appendix~\ref{app:sec14appendix}.


Define the event $E_w = \{W_1=w_1,...,W_n=w_n\}$ and set $g_n= \sum_{i<j<k} f_n(w_i,w_j,w_k)$ as the expected number of triangles conditional on specific values of the weights. Let $\triangle_n(w)$ be the conditional probability of the number of triangles conditionally on $E_w$.
Finally, set for $\zeta>0$,
\begin{equation}
    \label{eq-def-J}
    J(\eta) = (1+\zeta) \log (\zeta + 1/(1+\zeta)) /3.
\end{equation}
\begin{lemma}
\label{lem-conditionedchatterjeetriangles} 
There exists an $\varepsilon>0$ such that, for $\zeta>0$, and all $w=(w_1,...,w_n)$:
\begin{equation}
     \Prob{\triangle_n(w) >(1+\zeta )g_n} \leq e^{-J(\zeta) g_n/n^{2-\alpha}}+ e^{-n^\varepsilon}.
\end{equation}
  \end{lemma}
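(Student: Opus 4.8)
\textbf{Proof plan for Lemma \ref{lem-conditionedchatterjeetriangles}.}

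The plan is to apply the Chatterjee concentration bound (Lemma \ref{lem:chatterjeelem}) directly to the number of triangles $\triangle_n(w)$ on the event $E_w$, with index set $F$ the set of all unordered triples $\{i,j,k\}$ of vertices. For a triple $T=\{i,j,k\}$, take $X_T = \ind{T \text{ is a triangle}}$, and for two triples $T,T'$ define $X_{T(T')}$ to be the indicator that $T$ is a triangle using only edges disjoint from the vertex set of $T'$ (more precisely, only counting edges not incident to any vertex of $T'$; edges inside $T'$ or from $T'$ to the rest are excluded). With this choice, condition (b) holds because $X_{T(T')}$ depends only on edges avoiding $T'$, which are independent of the three edges inside $T'$; condition (c) holds since we are only removing indicators; and condition (d) holds with $a$ equal to (a bound on) the maximum number of triangles through any fixed vertex, because the only triangles $T$ with $X_T>0$ but $X_{T(T')}=0$ are those sharing a vertex with $T'$ — wait, more carefully, condition (d) needs: if $X_{T'}>0$ then $\triangle_n(w) \le a + \sum_T X_{T(T')}$; the discrepancy $\sum_T X_T - \sum_T X_{T(T')}$ counts triangles sharing an edge with the rest-structure around $T'$, i.e. triangles incident to the three vertices of $T'$, so $a$ can be taken as a bound on the number of triangles incident to a fixed set of three vertices. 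We take $X_T' = X_T$ so (a) is trivial, and then $\lambda = \sum_T \Exp{X_T} = g_n$.

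The main quantitative input is the choice of $a$: we need $a$ of order $n^{2-\alpha}$, up to the exceptional event of probability $e^{-n^\varepsilon}$. This is exactly where Lemma \ref{lem:lowdegtriangleedges} enters. The number of triangles incident to a fixed triple $\{i,j,k\}$ is at most the sum over the (at most three) edges of that triple of the number of triangles containing that edge, plus lower-order terms; and Lemma \ref{lem:lowdegtriangleedges} shows that conditionally on $W_iW_j<\mu n$, an edge lies in at most $K = O(n^{(2-\alpha)/2+\varepsilon'})$ triangles except with probability $e^{-c_1 K}$. One must also handle edges with $w_iw_j \ge \mu n$: since such vertices have weight product at least $\mu n$, at least one of them has weight $\ge \sqrt{\mu n}$, and there are few such high-weight vertices; a crude deterministic bound (any edge is in at most $n$ triangles) combined with a union bound over the small number of triples incident to a high-weight vertex keeps this contribution inside the $e^{-n^\varepsilon}$ slack, after possibly enlarging $a$ by a polynomial factor that is still $o$ of the bound — actually we want $a = O(n^{2-\alpha})$ precisely, so we must argue that the high-weight edges contribute a lower-order term: with $\alpha<2$, a high-weight vertex participates in $O(n)$ triangles, but there are only $O(\sqrt n \cdot \bar F(\sqrt n)) = o(n^{1-\alpha/2+\varepsilon})$ such vertices whp, so we can simply take $a = C n^{2-\alpha}$ for a large constant $C$ and absorb the rare bad configurations into the $e^{-n^\varepsilon}$ term. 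Taking a union bound of these failure events over all $O(n^2)$ edges gives total failure probability $O(n^2 e^{-c_1 n^{(2-\alpha)/2+\varepsilon'}}) = e^{-n^\varepsilon}$ for a suitable $\varepsilon>0$.

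On the complement of this bad event, Lemma \ref{lem:chatterjeelem} applies with $\lambda = g_n$, $a = C n^{2-\alpha}$, and $t = (1+\zeta)g_n \ge \lambda$, giving
\begin{equation}
\Prob{\triangle_n(w) \ge (1+\zeta)g_n} \le \exp\Big(-\frac{(1+\zeta)g_n}{Cn^{2-\alpha}}\log\big(\zeta + \tfrac{1}{1+\zeta}\big)\Big) + e^{-n^\varepsilon},
\end{equation}
which is exactly the claimed bound with $J(\zeta) = (1+\zeta)\log(\zeta + 1/(1+\zeta))/3$ once one checks $3/C$ can be absorbed (one simply defines $J$ with the constant $3$, having fixed $C=3$, or more honestly carries the constant through — the statement as written fixes the normalization so we choose $a=3n^{2-\alpha}$). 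The main obstacle is the bookkeeping in establishing $a = O(n^{2-\alpha})$: one must carefully separate the "typical" edges (weight product below $\mu n$, handled by Lemma \ref{lem:lowdegtriangleedges}) from the "atypical" high-weight edges (handled by counting that there are few such vertices), and verify that both the union bound over $O(n^2)$ edges and the rare high-weight configurations fit inside an $e^{-n^\varepsilon}$ error term; everything else is a direct substitution into the Chatterjee inequality.
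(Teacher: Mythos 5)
Your overall strategy (Chatterjee's inequality with $a\asymp n^{2-\alpha}$ supplied by Lemma \ref{lem:lowdegtriangleedges}, union bound over edges absorbed into $e^{-n^\varepsilon}$) is the right one and matches the paper, but your construction of the auxiliary variables $X_{T(T')}$ has a genuine flaw. You zero out $X_{T(T')}$ whenever an edge of $T$ is incident to a \emph{vertex} of $T'$, so the discrepancy in condition (d) is the number of triangles meeting $T'$ in at least one vertex. Your subsequent claim that this is "at most the sum over the (at most three) edges of that triple of the number of triangles containing that edge" is false: a triangle through the single vertex $i$ contains none of the edges $ij,jk,ik$, and the number of triangles through a fixed vertex can be larger than $n^{2-\alpha}$ by a factor comparable to that vertex's degree. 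With your definition one would be forced to take $a$ of order (max degree)$\times n^{2-\alpha}$, which destroys the exponent: in the application one needs $g_n/a\to\infty$ with $g_n\asymp n^{3-3\alpha/2}$, and $3-3\alpha/2 < 3-\alpha$. The fix is to only zero out $X_{T(T')}$ when $T$ and $T'$ share an \emph{edge}: if they share at most one vertex, the edge indicators of $T$ and $T'$ are disjoint and hence conditionally independent given the weights, so one may keep $X_{T(T')}=X_T$ and condition (b) still holds. Then the discrepancy really is bounded by the triangles through the three edges of $T'$, giving $a=3n^{2-\alpha}$ on the good event.

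A second, related problem is your treatment of shared edges with $w_uw_v\ge\mu n$. The lemma is a deterministic statement for \emph{every} weight vector $w$ (the conditional law given $E_w$), so you cannot argue that "there are few high-weight vertices whp" — the adversarial $w$ could make every edge deterministic. The paper's resolution is that a shared edge which is present with probability one does not break independence either, so for those pairs one again keeps $X_{T(T')}=X_{T}$ rather than zeroing them out; only shared random edges (those with $w_uw_v\le\mu n$) get zeroed, and exactly those are controlled by Lemma \ref{lem:lowdegtriangleedges}. With these two corrections your argument becomes the paper's proof.
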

  
\begin{proof}[Proof of Lemma \ref{lem-conditionedchatterjeetriangles}]
By Lemma~\ref{lem:lowdegtriangleedges}, when $w_iw_j<\mu n$, and choosing $K=n^{2-\alpha}$, 
\begin{equation}\label{eq:ijtriangles}
	\Prob{ \{i,j\} \text{ in }\geq n^{2-\alpha} \text{ triangles}}\leq \exp(-K_1 n^{2-\alpha}),
\end{equation} 
for some $K_1>0.$
This indicates that with probability at least $1-n^2\exp(-K_1 n^{2-\alpha}) \geq 1- \exp(-n^\varepsilon)$, all edges between vertices of weights $w_iw_j<\mu n$ are in at most $n^{2-\alpha}$ triangles.
We now work on this event, which we call $\mathcal{E}'$.

We set $X_{ijk}=X'_{ijk}$ as the indicator that a triangle is present between vertices $i,j,k$. Furthermore, we set $X_{ijk(uvw)}=X_{ijk}$ when $|\{i,j,k,u,v,w\}|\geq 5$. When $|\{i,j,k,u,v,w\}|= 4$, we set $X_{ijk(uvw)}=X_{ijk}$ when the overlap of $ijk$ and $uvw$ occurs at an edge with $w_uw_v>\mu n$ (where we assumed w.l.o.g. that the overlap occurs at $u,v$), and we set $X_{ijk(uvw)}= 0$ otherwise or when $\{i,j,k\}=\{u,v,w\}$. Then, $\sum_{i,j,k}X_{ijk(uvw)}$ and $X_{uvw}$ are independent. Indeed, when two triangles do not overlap at an edge, their presence is independent conditionally on the weights, as the edge indicators are independent conditionally on the weights. When the edge overlap occurs at an edge that is present with probability one, the presence of the two triangles is still independent conditionally on the weights. In all other cases, $X_{ijk(uvw)}=0$, which is also independent of $X_{uvw}$.

On the event $\mathcal{E}'$,
\begin{equation}
	\triangle_n =\frac{1}{6}\sum_{i,j,k}X_{ijk}\leq \sum_{ijk}X_{ijk(uvw)}+3n^{2-\alpha}.
\end{equation}
 Lemma~\ref{lem:chatterjeelem} with $a=3n^{2-\alpha}$, $t=(1+\zeta)g_n$ and $\lambda=g_n$ concludes the assertion. 
\end{proof}

\begin{proof}[Proof of Lemma \ref{lem:expconditionedtausmallnew}]
	Recall that $F_n(x)$ denotes the empirical weight distribution. 
Now by (\ref{eq-tailconcentration}), the event 
$\mathcal{E}_1= \{ \sup_{x< \sqrt{\mu n}} |\frac{\bar F_n(x)}{\bar F(x)}-1| \leq \eta \}$ happens with probability of at least $1-e^{-n^\varepsilon}$ for
some $\varepsilon>0$. Fix some $a<1$. On the event $\mathcal{E}_1$, 
\begin{align}\label{eq:exptrianglb}
    G_n& \geq n^3 \int_{a\sqrt{\mu n}}^{\sqrt{\mu n}}\int_{a\sqrt{\mu n}}^{\sqrt{\mu n}}\int_{a\sqrt{\mu n}}^{\sqrt{\mu n}}f_n(u,v,w) dF_n(u)dF_n(v)dF_n(w)\nonumber\\
    & = \mu^{-3}\Big(\int_{a\sqrt{\mu n}}^{\sqrt{\mu n}}w^2dF_n(w)\Big)^3 \geq  \varepsilon_2 n^{3-3\alpha/2}L(\sqrt{n})^3
\end{align}
for some $\varepsilon_2>0.$
Now, write
 \begin{align}
	\Prob{\triangle_n >(1+\zeta )G_n} \leq \Prob{\mathcal{E}_1^c} + 	\Prob{\triangle_n >(1+\zeta )G_n; \mathcal{E}_1}.
\end{align}
The first term is asymptotically small. The second term can be bounded by applying Lemma \ref{lem-conditionedchatterjeetriangles},
using the lower bound (\ref{eq:exptrianglb}) for $G_n$ on $\mathcal{E}_1$ to conclude the assertion.  
\end{proof}

\subsection{Proof of Lemma \ref{lem-lb-additionaltriangles}}

To prove (\ref{eq-additionaltrianglessinglehub}), we wish to apply Chebyshev's inequality, which requires appropriate estimates for the first
two moments of $\triangle_n (\delta,a)$.
Note first that
\begin{align*}
    \Exp{ \triangle_n (\delta,a)} &= (n-1)^3\int_{0<x<y<z<n^{1/2+\delta}} f_{n}(x,y,z) dF(x)dF(y)dF(z)\\
    & \hspace{1cm}+(n-1)^2\int_{0<x<y<n^{1/2+\delta}} f_{n}(x,y,c_a(n)) dF(x)dF(y).
\end{align*}
Using this expression, and applying similar ideas as in the proofs of Lemma \ref{lemma-mean} and Lemma \ref{lemma-sb}, along with the definition 
 of $c_a(n)$ it follows that $\Exp{ \triangle_n (\delta,a)} \sim (1+a)m_n$.

We proceed by analyzing the second moment of $\triangle_n (\delta,a)$ by using 
the concentration bound Lemma \ref{lem-conditionedchatterjeetriangles}, as well as a concentration bound for $F_n$. In particular, it follows from (\ref{cbtail1}) that for sufficiently small $\delta>0$ there exists an $\varepsilon>0$ such that 
\begin{equation}
\label{eq-tailconcentration}
\Prob{\sup_{x< n^{1/2+\delta}} |\frac{\bar F_n(x)}{\bar F(x)}-1| > \zeta} \leq e^{-n^\varepsilon}.
 \end{equation}
On the event  $E_n(\zeta) = \{\sup_{x< n^{1/2+\delta}} |\frac{\bar F_n(x)}{\bar F(x)}-1| \leq  \zeta\}$, 
$G_{n-1}/n^{2-\alpha} > n^\varepsilon$ for some $\varepsilon>0$ for $n$ sufficiently large (see also the detailed computation in the proof of 
Lemma \ref{lem-conditionedchatterjeetriangles}). In addition, using the expression
 \begin{align*}
 \triangle_n (\delta,a) &= (n-1)^3\int_{0<x<y<z<n^{1/2+\delta}} f_{n-1}(x,y,z) dF_{n-1}(x)dF_{n-1}(y)dF_{n-1}(z)\\
    & \hspace{1cm}+(n-1)^2\int_{0<x<y<n^{1/2+\delta}} f_{n-1}(x,y,c_a(n)) dF_{n-1}(x)dF_{n-1}(y),
\end{align*}
we see that $\triangle_n (\delta,a) < (1+\zeta)^3(1+a)m_n$ with high probability on the event $E_n$, while outside of this event $\triangle_n (\delta,a)\leq n^3$, and therefore
\begin{equation}
    \Exp{ \triangle_n (\delta,a)^2} \leq n^6 (2e^{-n^\varepsilon} + e^{-J(\zeta) n^\varepsilon}) + (1+\zeta)^4 (1+a)^2m_n^2.
\end{equation}
Next, fix $\delta>0$. Using Chebyshev's inequality and the previous bounds we obtain for every $\zeta>0$,
\begin{align*}
\limsup_{n\rightarrow\infty}\Prob{|\triangle_n(\delta, a)/\Exp{ \triangle_n (\delta,a)}- 1| > \delta} &\leq \frac 1{\delta^2} ( (1+\zeta)^4-1)).
\end{align*} 
The proof of  (\ref{eq-additionaltrianglessinglehub}) is now completed by letting $\zeta \downarrow 0$.
 
We now turn to the proof of (\ref{eq-additionaltrianglesmultiplehubs}) 
Note that
\begin{align*}
    \Exp{\triangle_{n,l}(\delta,z_1,...,z_l)} &= (n-l)^3\int_{0<x<y<z<n^{1/2+\delta}} f_{n-l}(x,y,z) dF(x)dF(y)dF(z)\\
    &
    \hspace{1cm}+(n-l)^2 \sum_{i=1}^l \int_{0<x<y<n^{1/2+\delta}}  f_{n-l}(x,y,(n-l)z_i) dF(x) dF(y)\\
    &\hspace{1cm}+ (n-l) \sum_{i=1,j>i}^l \int_0^{n^{1/2+\delta}} f_n(x,(n-l)z_i,(n-l)z_j) dF(x). 
\end{align*}
Observe that the second and third term in this expression are very related to the quantities analyzed in Proposition \ref{prop-hublimit}.
Using similar ideas as in the proof of Proposition \ref{prop-hublimit} with $F_n$ replaced by $F$, it follows from the above expression that 
\begin{equation}
     \Exp{\triangle_{n,l}(\delta,z_1,...,z_l)}/n\rightarrow C_l(z_1,...,z_l).
\end{equation}
We proceed by analyzing the second moment of $\triangle_{n,l}(\delta,z_1,...,z_l)$, using concentration bounds. 
As before, note that on the event  $E_n(\zeta) = \{\sup_{x< n^{1/2+\delta}} |\frac{\bar F_n(x)}{\bar F(x)}-1| \leq  \zeta\}$, 
$G_{n-l}/n^{2-\alpha} > n^\varepsilon$ for some $\varepsilon>0$ for $n$ sufficiently large.

Modify the definition of $G_{n-l,l}$ in Proposition \ref{prop-hublimit} to $G_{n-l,l}^{n^{1/2+\delta}}$ to truncate all integrals at 
$n^{1/2+\delta}$ rather than $\infty$, i.e. 
\begin{align}
\label{eq-additional-exp-triangles-multiple-truncated}
    G_{n,l}^{n^{1/2+\delta}}(z_1,...,z_l) & = n^2 \sum_{i=1}^l \int_{0<x<y<n^{1/2+\delta}}  f_n(x,y,nz_i) dF_n(x) dF_n(y)\nonumber\\
    & \quad + n\sum_{i=1,j>i}^l \int_0^{n^{1/2+\delta}} f_n(x,nz_i,nz_j) dF_n(x). 
\end{align}
Again using Lemma \ref{lem-conditionedchatterjeetriangles}, we obtain that, with high probability, 
\begin{equation}
    \triangle_{n,l}(\delta,z_1,...,z_l) \leq (1+\zeta) (G_{n-l} +  G_{n,l}^{n^{1/2+\delta}}(z_1,...,z_l)),
\end{equation}
which is in turn bounded by $(1+\zeta)^2 n (C^3 H + C_l(z_1,...,z_l))$ with high probability. The proof is now completed by using Chebyshev's inequality, and letting $\zeta\downarrow 0$ as before.


\section{Many dominating hubs}\label{sec:prooftausmall}

To prove Theorems~\ref{thm:smalldevs} and~\ref{thm:triangldptausmall}, we distinguish several types of vertices, based on the vertex weights. Specifically, fix $\zeta>2$ and $\varepsilon>0$ and let $\gamma$ as in Theorem~\ref{thm:triangldptausmall}, then
\begin{itemize}
	\item Type A: $W_i\leq \sqrt{\mu n}$,
	\item Type $B_3$: $W_i\leq D^{1/\alpha}n^{(3-\gamma)/(2\alpha)}\log(n^{(\gamma-3)/2+\alpha})^{-1/\alpha}$, 
	\item Type $B_2$: $W_i\in [D^{1/\alpha}n^{(3-\gamma)/(2\alpha)}\log(n^{(\gamma-3)/2+\alpha})^{-1/\alpha},n/\log(n)^{\zeta/\alpha}]$,
	\item Type $B_1$: $W_i\geq n/\log(n)^{\zeta/\alpha}$,
\end{itemize}
where we set
\begin{equation}
    D = \frac{\sqrt{2a}}{h(1+\varepsilon)},
\end{equation}
with $h(\cdot)$ as in~\eqref{eq:h}.
Now depending on $\gamma$ and $\alpha$, vertices of type A and $B_3$ or type $A$ and $B_2$ may overlap, see Figure~\ref{fig:absets}. In the proof of Theorem~\ref{thm:smalldevs}, we will therefore sometimes split up the sets $B_3$ or $B_2$ at $\sqrt{\mu n}$ to avoid this overlap.
We denote the number of triangles between one vertex from $B_{i}$ one from $B_j$ and one from $B_k$ by $\triangle_{B_i,B_j,B_k}$. Similarly, we denote the number of edges with one end in $B_i$ and one end in $B_j$ by $E_{B_i,B_j}$. The main strategy of the proof of Theorem~\ref{thm:triangldptausmall} is to split up the triangles into different types, and bounds their contributions. The main contribution is from triangles of type $B_1B_1B_i$ for $i=1,2,3$, that is, triangles with two high-degree vertices and one other vertex. We show that all other types of triangles appear less often with high enough probability. 
Here we will use that the empirical weight distribution of $B_3$ vertices is close to its mean with sufficiently high probability, to get rid of the randomness caused by the weight sampling. On $B_2$, we get rid of the random weights by using the fact that the probability that a triangle appears is non-decreasing in the weights, so that we may assume that all $B_2$ vertices have weights $n/\log(n)^{\zeta/\alpha}.$


\begin{figure}
    \centering
    \begin{subfigure}[b]{0.45\linewidth}
        \includegraphics[width=\linewidth]{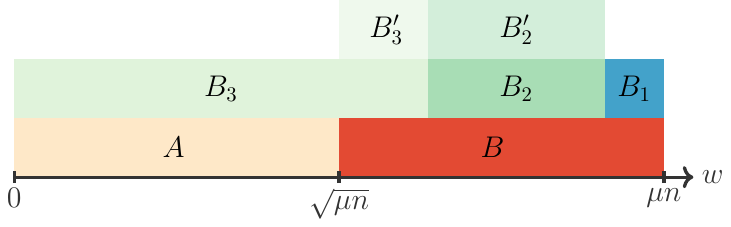}
        \caption{Case 1: $A\subseteq B_3$}
        \label{fig:case1}
    \end{subfigure}
    \hfill
    \begin{subfigure}[b]{0.45\linewidth}
        \includegraphics[width=\linewidth]{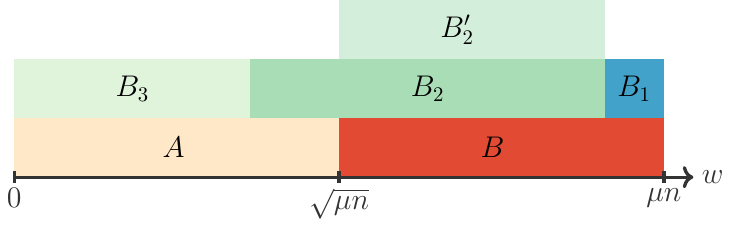}
        \caption{Case 2: $B_3\subseteq A$.}
    \end{subfigure}
    \caption{Illustration of the sets $A$, $B$ and $B_1$, $B_2$, $B_3$}
    \label{fig:absets}
\end{figure}

Before we prove Theorem \ref{thm:smalldevs}, we first provide several lemmas. We begin by recalling a variation of 
Theorem A.1.4 in \cite{AlonSpencer}, to bound tail probabilities of sums of independent Bernoulli random variables. 
\begin{lemma}
\label{lem-concentration}
Let $B_i,i\geq 1$ be a sequence of independent Bernoulli random variables with $p_i=\prob(B_i=1)=1-\prob(B_i=0)$.
Set $m_n = \sum_{i=1}^n p_i$. For every $b>0$ we have 
\begin{equation}
    \prob(\sum_{i=1}^n B_i > (1+b) m_n) \leq e^{-m_n I_B(b)}, \hspace{1cm} \prob(\sum_{i=1}^n B_i < (1-b) m_n) \leq e^{-m_n I_B(-b)},
\end{equation}
with $I_B(b) = (1+b) \log (1+b)-b$.
\end{lemma}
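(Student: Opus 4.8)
The plan is to prove Lemma~\ref{lem-concentration} by the classical exponential Markov (Chernoff--Cram\'er) inequality, optimizing the tilting parameter explicitly; this is a variant of Theorem~A.1.4 in \cite{AlonSpencer}, so the argument is short. Write $S_n = \sum_{i=1}^n B_i$. For the upper tail I would fix $\lambda > 0$, apply Markov's inequality to $\me^{\lambda S_n}$, and use independence to factor the moment generating function as $\prod_{i=1}^n \Exp{\me^{\lambda B_i}} = \prod_{i=1}^n (1 + p_i(\me^\lambda - 1))$. The crucial step is to apply the elementary inequality $1 + x \leq \me^x$ to each factor, which gives $\prod_{i=1}^n \Exp{\me^{\lambda B_i}} \leq \me^{m_n(\me^\lambda - 1)}$; the point is that this bound depends on the $p_i$ only through their sum $m_n$, so the problem reduces to the deterministic optimization of $\lambda \mapsto \lambda(1+b) - (\me^\lambda - 1)$ in the exponent of $\Prob{S_n > (1+b)m_n} \leq \me^{-m_n(\lambda(1+b) - (\me^\lambda-1))}$.

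Next I would carry out this optimization: the derivative $(1+b) - \me^\lambda$ vanishes at $\lambda = \log(1+b)$, which is positive precisely because $b > 0$, and substituting back yields exponent $(1+b)\log(1+b) - (1+b) + 1 = (1+b)\log(1+b) - b = I_B(b)$, giving the first inequality. For the lower tail I would repeat the argument with $\me^{-\theta S_n}$, $\theta > 0$, obtaining $\Prob{S_n < (1-b)m_n} \leq \me^{m_n(\theta(1-b) + \me^{-\theta} - 1)}$; for $b \in (0,1)$ the exponent is minimized at $\theta = -\log(1-b) > 0$, and substitution gives $-(1-b)\log(1-b) - b = -I_B(-b)$, as desired. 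When $b \geq 1$ the event $\{S_n < (1-b)m_n\}$ is empty since $S_n \geq 0$, so the bound is trivial and one may read the lower-tail statement as concerning $b \in (0,1)$.

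I do not anticipate a genuine obstacle: the only mildly subtle point is recognizing that the Poissonization bound $1 + x \leq \me^x$ is exactly what decouples the moment generating function from the individual success probabilities, after which everything reduces to one-variable calculus. Since the result is standard, in the write-up it would suffice to record this derivation in a line or two, or simply to cite \cite{AlonSpencer}.
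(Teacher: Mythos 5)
Your proof is correct: the Chernoff argument with the bound $1+x\leq \me^x$ and the optimal tilts $\lambda=\log(1+b)$ and $\theta=-\log(1-b)$ gives exactly the exponents $I_B(b)$ and $I_B(-b)$, and your handling of $b\geq 1$ in the lower tail is fine. The paper does not prove this lemma at all but simply cites it as a variation of Theorem~A.1.4 in \cite{AlonSpencer}, and your derivation is the standard proof of that result, so there is nothing to compare beyond noting that you have supplied the omitted details.
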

We next provide an elementary lemma that bounds the probability that polynomially many vertices have at least a given weight:
\begin{lemma}\label{lem:nvertices}
Suppose that $\Prob{W>x} \sim C x^{-\alpha}$. Then, for $\gamma>1-\alpha\beta$ and $d,u>0$,
    \begin{equation}
        \Prob{un^\gamma \text{ vertices of weight }>dn^\beta} \leq  \exp\Big(-un^{\gamma} \log(n^{\gamma-1+\alpha\beta})\Big)(1+o(1)),
    \end{equation}
    and 
    \begin{equation}
        \Prob{un^\gamma \text{ vertices of weight }>dn^\beta} \geq  \frac{1}{\sqrt{2n}}\exp\Big(-un^{\gamma} \log(n^{\gamma-1+\alpha\beta})\Big)(1+o(1)).
    \end{equation}
\end{lemma}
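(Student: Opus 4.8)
The statement to prove is Lemma~\ref{lem:nvertices}, which gives matching (up to polynomial factors) upper and lower bounds for the probability that at least $un^\gamma$ of the $n$ i.i.d.\ weights exceed $dn^\beta$.

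\textbf{Plan.} The number of vertices with weight exceeding $dn^\beta$ is a $\mathrm{Binomial}(n, p_n)$ random variable with $p_n = \Prob{W > dn^\beta} \sim C (dn^\beta)^{-\alpha} = C d^{-\alpha} n^{-\alpha\beta}$, so $np_n \sim Cd^{-\alpha} n^{1-\alpha\beta}$, which is $o(n^\gamma)$ precisely under the hypothesis $\gamma > 1-\alpha\beta$. Thus we are asking for the probability that a binomial exceeds its mean by a polynomially large factor, and both bounds should come from a direct estimate of the binomial tail. For the \emph{upper bound}, I would apply the Chernoff-type bound already recorded in the paper (Lemma~\ref{lem-concentration}, or equivalently the binomial bound \cite[Lemma 2.3]{stegehuis2022scale}): taking $k = \lceil un^\gamma\rceil$, one has $\Prob{\mathrm{Bin}(n,p_n) \geq k} \leq \binom{n}{k} p_n^k \leq (np_n)^k / k! \leq (enp_n/k)^k$. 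Taking logarithms, $\log \Prob{\cdot} \leq k\big(1 + \log(np_n) - \log k\big)$. Now $\log k = \log u + \gamma \log n + o(1)$ and $\log(np_n) = \log(Cd^{-\alpha}) + (1-\alpha\beta)\log n + o(1)$, so the bracket is $-(\gamma - 1 + \alpha\beta)\log n + O(1) = -\log(n^{\gamma-1+\alpha\beta})(1+o(1))$, and multiplying by $k \sim un^\gamma$ gives exactly the claimed upper bound $\exp(-un^\gamma \log(n^{\gamma-1+\alpha\beta}))(1+o(1))$, where the $O(1)$ per-term error is absorbed into the $(1+o(1))$ since $\log n \to \infty$.

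For the \emph{lower bound}, I would bound the binomial probability below by the single dominant term: $\Prob{\mathrm{Bin}(n,p_n) \geq k} \geq \binom{n}{k} p_n^k (1-p_n)^{n-k}$. Since $np_n \to \infty$ is polynomially smaller than $k$, we have $(1-p_n)^{n-k} = \exp((n-k)\log(1-p_n)) = \exp(-np_n(1+o(1))) = e^{o(n^\gamma)}$, which contributes only to the $(1+o(1))$ in the exponent. For the binomial coefficient, use $\binom{n}{k} \geq (n/k)^k$, or more precisely Stirling's formula: $\binom{n}{k} p_n^k \geq \frac{1}{\sqrt{2\pi k}}\,(n/k)^k\, e^{o(k)}\, p_n^k$ (using $k = o(n)$ so that $(1-k/n)^{-(n-k)} = e^{o(n^\gamma)}$ as well). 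Then $\log\big(\binom{n}{k}p_n^k\big) \geq k\log(np_n/k) - \tfrac12\log(2\pi k) + o(k)$, and $k\log(np_n/k) = -k(\gamma-1+\alpha\beta)\log n + O(k)$ as before. The prefactor $-\tfrac12\log(2\pi k) = -\tfrac{\gamma}{2}\log n + O(1) \geq -\tfrac12\log(2n) + O(1)$ for $\gamma < 2$; if one wants the clean constant $\tfrac{1}{\sqrt{2n}}$ stated in the lemma one should simply remark that $\tfrac{1}{\sqrt{2\pi k}} \geq \tfrac{1}{\sqrt{2n}}$ holds for $n$ large whenever $\gamma < 2$ (and in the regimes where this lemma is applied this is the case), or absorb it — I would phrase it as "$\geq \frac{1}{\sqrt{2n}}\exp(\cdots)(1+o(1))$" which is what is claimed, noting that any polynomial prefactor is harmless since the exponent is of order $n^\gamma \log n$.

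\textbf{Main obstacle.} There is no deep obstacle here; the only care needed is bookkeeping of the $o(\cdot)$ terms — one must check that (i) the $O(1)$ additive errors in the per-term logarithm, after multiplication by $k \asymp n^\gamma$, are genuinely $o(n^\gamma \log n)$ and hence absorbed into the stated $(1+o(1))$ multiplying the exponent; (ii) the contribution $\exp(-np_n(1+o(1)))$ from $(1-p_n)^{n-k}$ is $\exp(o(n^\gamma \log n))$, which holds because $np_n = \Theta(n^{1-\alpha\beta})$ and $1-\alpha\beta < \gamma$; and (iii) the slowly-varying correction hidden in "$\Prob{W>x}\sim Cx^{-\alpha}$" only affects constants and hence is again absorbed. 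The one genuinely-fiddly point is justifying the exact prefactor $\tfrac{1}{\sqrt{2n}}$ in the lower bound via Stirling, so I would either derive it carefully from $k!\le e\sqrt{k}(k/e)^k$ and $k\le n$, or simply note that the precise polynomial prefactor is immaterial for all downstream applications in Section~\ref{sec:prooftausmall} and state the lower bound with whatever polynomially-decaying prefactor Stirling naturally produces.
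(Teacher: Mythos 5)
Your proposal is correct and follows essentially the same route as the paper: the paper's upper bound is exactly your Chernoff-type binomial tail estimate (it invokes Lemma~\ref{lem-concentration}, which after substituting $m_n=np_n$ and $(1+b)m_n=k$ gives the same $\exp(-k\log(k/(np_n))+O(k))$ you derive), and its lower bound cites a Stirling-based single-term binomial bound (\cite[Lemma 4.7.2]{ash2012information}), which is precisely the $\binom{n}{k}p_n^k(1-p_n)^{n-k}$ estimate you carry out by hand. Your bookkeeping of the absorbed $O(k)$ and $np_n$ terms and of the $1/\sqrt{2n}$ prefactor is consistent with how the paper uses the lemma.
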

\begin{proof}
By~\eqref{D-tail}, the probability that a vertex has weight at least $dn^\beta$ is given by $Cd^{-\alpha}n^{-\alpha\beta}(1+o(1))$, and is independent for each vertex. Thus, by Lemma~\ref{lem-concentration},
\begin{align}
& \Prob{n^\gamma \text{ vertices of weight }>cn^\beta}\nonumber\\
& \leq  \exp\Big(-un^{\gamma} \log\Big(\frac{un^\gamma}{Cn^{1-\alpha\beta}d^{-\alpha}}\Big)+n^\gamma-Cd^{-\alpha}n^{-\alpha\beta}\Big)(1+o(1))\nonumber\\
& \leq \exp\Big(-un^{\gamma} \log(n^{\gamma-1+\alpha\beta})\Big)(1+o(1)).
 \end{align}
 The second inequality follows similarly, using~\cite[Lemma 4.7.2]{ash2012information} instead to get the lower bound. 
\end{proof}
We now provide bounds on the number of vertices in $B_1$ and $B_2$, $N(B_1)$ and $N(B_2$)  (we will usually upper bound the number of $B_3$ vertices by the total number of vertices $n$). 

\begin{lemma}
Suppose that $\Prob{W>x} \sim C x^{-\alpha}$. For $K\gg n^{(\gamma-1)/2}\log(n^{(\gamma-3)/2+\alpha})$,
    \begin{equation}\label{eq:Nb1b2}
        \Prob{N(B_1\cup B_2)>K}\leq \exp\Big(-K\log\Big(\frac{K}{D^{-\alpha}n^{(\gamma-1)/2}\log(n^{(\gamma-3)/2+\alpha})}\Big)\Big)(1+o(1)),
    \end{equation}
    and for $K\gg n^{1-\alpha}\log(n)^\zeta $,
    \begin{equation}\label{eq:Nb1}
        \Prob{N(B_1)>K}\leq \exp\Big(-K\log\Big(\frac{K}{n^{1-\alpha}\log(n)^\zeta}\Big)\Big)(1+o(1)).
    \end{equation}
\end{lemma}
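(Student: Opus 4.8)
The plan is to treat each of $N(B_1\cup B_2)$ and $N(B_1)$ as a sum of i.i.d.\ Bernoulli random variables and apply the Chernoff-type bound of Lemma~\ref{lem-concentration}, exactly as in the proof of Lemma~\ref{lem:nvertices}. Write $N(B_1\cup B_2)=\sum_{i=1}^n I(W_i>t_n)$ with threshold $t_n = D^{1/\alpha}n^{(3-\gamma)/(2\alpha)}\log(n^{(\gamma-3)/2+\alpha})^{-1/\alpha}$. By~\eqref{D-tail}, $\Prob{W>t_n}\sim C t_n^{-\alpha}$, and since $t_n^{-\alpha}= D^{-1}n^{(\gamma-3)/2}\log(n^{(\gamma-3)/2+\alpha})$, the mean of this sum is $m_n := n\Prob{W>t_n}\sim C D^{-1} n^{(\gamma-1)/2}\log(n^{(\gamma-3)/2+\alpha})$.

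First I would apply Lemma~\ref{lem-concentration} with $b$ chosen so that $(1+b)m_n=K$, which is legitimate precisely under the hypothesis $K\gg n^{(\gamma-1)/2}\log(n^{(\gamma-3)/2+\alpha})$ (so in particular $K\gg m_n$ and $1+b\to\infty$). This gives
\begin{equation}
\Prob{N(B_1\cup B_2)>K}\leq \exp\big(-m_n I_B(K/m_n-1)\big)=\exp\big(-(K\log(K/m_n)-K+m_n)\big).
\end{equation}
Since $K\gg m_n$ we have $\log(K/m_n)\to\infty$, so $-K+m_n=o(K\log(K/m_n))$; moreover replacing $m_n$ by $D^{-\alpha}n^{(\gamma-1)/2}\log(n^{(\gamma-3)/2+\alpha})$ inside the logarithm only changes $\log(K/m_n)$ by an additive constant (absorbing the factors $C$ and the power of $D$), which is again $o(K\log(K/m_n))$. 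This yields exactly the bound~\eqref{eq:Nb1b2}. The bound~\eqref{eq:Nb1} follows identically, now with threshold $n/\log(n)^{\zeta/\alpha}$, for which $\Prob{W>n/\log(n)^{\zeta/\alpha}}\sim C n^{-\alpha}\log(n)^{\zeta}$, so the mean is $\sim C n^{1-\alpha}\log(n)^\zeta$, and the hypothesis $K\gg n^{1-\alpha}\log(n)^\zeta$ plays the same role as before.

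There is essentially no hard obstacle here: the argument is a direct application of Lemma~\ref{lem-concentration}, and the only care needed is the bookkeeping of which sub-polynomial and constant factors (the leading constant $C$, the constant $D$, and the $-K+m_n$ correction) are absorbed into the $(1+o(1))$, using that $K\gg m_n$ forces $\log(K/m_n)\to\infty$. This is the same simplification already carried out in the proof of Lemma~\ref{lem:nvertices}, so the write-up can largely cite that computation.
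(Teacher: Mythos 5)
Your proposal is correct and follows essentially the same route as the paper: recognize $N(B_1\cup B_2)$ and $N(B_1)$ as binomial counts with mean $n\Prob{W>t_n}$, apply the Chernoff-type bound of Lemma~\ref{lem-concentration}, and absorb the constants ($C$, powers of $D$) and the $-K+m_n$ correction into the $(1+o(1))$ using that $K\gg m_n$ forces $\log(K/m_n)\to\infty$. No gaps.
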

\begin{proof}
    As the number of vertices of weight at least $x\gg 1$, $N_x$ is binomial with parameters $n$ and $Cx^{-\alpha}(1+o(1))$, Lemma~\ref{lem-concentration} gives that 
    \begin{align}
        \Prob{N_x>K} & \leq \exp\Big(-K\log\Big(\frac{K}{Cnx^{-\alpha}}\Big)+K-1\Big)(1+o(1)) \nonumber\\
        & \leq \exp\Big(-K\Big(\log\Big(\frac{K}{Cnx^{-\alpha}}\Big)-1\Big)\Big)(1+o(1)).
    \end{align}
    Plugging in the lower weight bounds for $B_1\cup B_2$ and for $B_1$, $n^{(3-\gamma)/(2\alpha)}\log(n^{(\gamma-3)/2+\alpha})^{-1/\alpha}$ and $n/\log(n)^{\zeta/\alpha}$ respectively and noticing that under the given constraints on $K$ the $-1$ in the exponent is of lower order of magnitude, gives the result.
\end{proof}

\subsection{Bounding specific triangle and edge types}
We now turn to investigating the number of edges and triangles between vertices of the groups $A$, $B_1,B_2,B_3$, that will later be used in the upper bound for the total number of triangles. 
We first provide some lemmas that bound the number of edges and triangles between $B_3$ vertices:
\begin{lemma}\label{lem:b3triangedges}
Suppose that $\Prob{W>x} \sim C x^{-\alpha}$. Let $B_3$ be the set of vertices with weights at most 
 \begin{equation}\label{eq:Qn}
 	Q_n=D^{-1/\alpha}n^{(3-\gamma)/(2\alpha)}\log(n^{(\gamma-3)/2+\alpha})^{-1/\alpha}.
 \end{equation}
 Then, for $\gamma\in(1,3)$
 \begin{align}\label{eq:triangb3b3b3}
 	\Prob{\triangle_{B_3,B_3,B_3}>Kn^\gamma}
 	& \leq n\exp\Big(-\sqrt{2a}n^{(\gamma-1)/2}\log(n^{(\gamma-3)/2+\alpha}))\Big)
 \end{align}
when $Kn^\gamma >(1+\varepsilon)^3C^3Hn^{3-3\alpha/2}$. 
 Furthermore, for $K>(1+\varepsilon)^2\mu n,$
 \begin{equation}\label{eq:edb3b3}
 	\Prob{E_{B_3,B_3}>K}\leq 
  \exp(-Dn^{(\gamma-1)/2}\log(n^{(\gamma-3)/2+\alpha})h(1+\varepsilon)).
 \end{equation}
\end{lemma}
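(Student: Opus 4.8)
The plan is to prove both statements by reducing the (dependent) triangle and edge counts to sums of independent Bernoulli indicators, after first replacing all $B_3$-vertex weights by the extreme value $Q_n$. Since $f_n$ and $p(h,h')$ are nondecreasing in each coordinate, the number of triangles (respectively edges) among $B_3$-vertices is stochastically dominated by the corresponding count in a graph where every vertex involved has weight exactly $Q_n$. In that dominating graph each potential edge is present independently with probability $p(Q_n,Q_n)=\min\{Q_n^2/(\mu n),1\}$, and each potential triangle with probability $p(Q_n,Q_n)^3$. Writing $N$ for $|B_3|$, which we bound by $n$, the expected number of triangles is at most $\binom n3 (Q_n^2/(\mu n))^3$, and plugging in \eqref{eq:Qn} one checks this is of order $n^{(3-\gamma)}/\mu^3$ times a $\log$ correction — in particular $o(Kn^\gamma)$ under the stated hypothesis $Kn^\gamma>(1+\varepsilon)^3C^3Hn^{3-3/2\alpha}$ once one observes the mean is genuinely a lower-order term than the threshold (this comparison, using $\gamma\in(1,3)$ and the precise form of $Q_n$, is the one routine computation I would carry out carefully). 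One subtlety: triangle indicators in the dominating graph are \emph{not} independent, since two triangles sharing an edge are correlated. So I would not apply Lemma \ref{lem-concentration} directly to the triangle count; instead I would apply Lemma \ref{lem:chatterjeelem} (Chatterjee's bound), exactly as in the proof of Lemma \ref{lem-conditionedchatterjeetriangles}: take $X_{ijk}=X'_{ijk}$ the triangle indicator, $X_{ijk(uvw)}$ the modification that zeroes out triangles sharing an edge unless that edge is deterministic, and constant $a=O(n^{2-\alpha})$ bounding the number of triangles through a fixed low-weight edge (here $Q_n\ll\sqrt{\mu n}$ so edges among $B_3$ are genuinely random and the bound of Lemma \ref{lem:lowdegtriangleedges} applies). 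With $\lambda=\Exp{\triangle_{B_3,B_3,B_3}}$ and $t=Kn^\gamma$, Lemma \ref{lem:chatterjeelem} gives $\Prob{\triangle_{B_3,B_3,B_3}>Kn^\gamma}\le\exp(-\tfrac t a\log(\tfrac t\lambda-1+\tfrac\lambda t))$; substituting the orders of $t$, $\lambda$, $a$ and simplifying the logarithm (which is $\Theta(\log n)$ since $t/\lambda$ is polynomial in $n$) yields a bound of the form $\exp(-c\,n^{\gamma-(2-\alpha)}\log n)$, and one then checks this is dominated by $n\exp(-\sqrt{2a}\,n^{(\gamma-1)/2}\log(n^{(\gamma-3)/2+\alpha}))$; the extra factor $n$ absorbs the union bound over the (at most $n^2$) choices of the exceptional edge in the event $\mathcal E'$ of Lemma \ref{lem-conditionedchatterjeetriangles}.

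For the edge count \eqref{eq:edb3b3} the argument is cleaner because, after the same monotone coupling to weight $Q_n$, the edge indicators \emph{are} independent, so Lemma \ref{lem-concentration} applies directly. The expected number of edges among $B_3$-vertices in the dominating graph is at most $\binom n2 Q_n^2/(\mu n)\le \tfrac12 n Q_n^2/\mu$; using \eqref{eq:Qn} and $D=\sqrt{2a}/h(1+\varepsilon)$ one computes $n Q_n^2/\mu = \mu n\cdot D^{-2/\alpha}n^{(3-\gamma)/\alpha-1}\log(\cdots)^{-2/\alpha}$, which is $o(\mu n)$ whenever $\gamma>3-\alpha$, so the mean $m'$ satisfies $m'\le\mu n(1+o(1))$ and in any case $m'\le(1+\varepsilon)\mu n$ eventually. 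Then for $K>(1+\varepsilon)^2\mu n\ge(1+\varepsilon)m'$ Lemma \ref{lem-concentration} with $b=K/m'-1\ge\varepsilon$ gives $\Prob{E_{B_3,B_3}>K}\le\exp(-m' I_B(b))$, and since $I_B$ is increasing and $m' I_B((1+\varepsilon)^2\mu n/m'-1)$ is minimized over admissible $m'$ near $m'=\mu n$, one gets $\exp(-\mu n\,I_B(\varepsilon)(1+o(1)))$ — but this is far \emph{smaller} than the claimed right-hand side, which is only $\exp(-D n^{(\gamma-1)/2}\log(\cdots)h(1+\varepsilon))$, a much weaker bound. Hence \eqref{eq:edb3b3} follows a fortiori; alternatively, and more in the spirit of the intended use, one can bound $E_{B_3,B_3}$ by $E_{B_3,B_3\cup B_2\cup B_1}$-type quantities or simply note that the subcritical Bernoulli sum with mean $\le(1+\varepsilon)\mu n$ exceeds $(1+\varepsilon)^2\mu n$ with probability at most $\exp(-c\mu n)$, which is $o$ of the stated bound since $(\gamma-1)/2<1$. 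I would phrase the final line as: the stated bound holds because the true decay rate is exponential in $n$ while the claimed rate is only $n^{(\gamma-1)/2}\log n$.

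\emph{Main obstacle.} The genuinely delicate point is the triangle bound \eqref{eq:triangb3b3b3}: one must handle the dependence between overlapping triangles (forcing the use of Lemma \ref{lem:chatterjeelem} rather than a naive Chernoff bound), verify that $Q_n\ll\sqrt{\mu n}$ so that the ``low-weight edge'' hypothesis $K>n^{(2-\alpha)/2+\varepsilon}$ of Lemma \ref{lem:lowdegtriangleedges} is met with $K=n^{2-\alpha}$ (this needs $(3-\gamma)/(2\alpha)<1/2$, i.e.\ $\gamma>3-\alpha$, which should be part of the standing assumptions in this section), and then carefully match the exponent coming out of Chatterjee's inequality, namely $\Theta(n^{\gamma}/n^{2-\alpha}\cdot\log n)$, against the target exponent $\Theta(n^{(\gamma-1)/2}\log n)$ — checking that $\gamma-(2-\alpha)\ge(\gamma-1)/2$ in the relevant range so the claimed (weaker) bound indeed follows. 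Everything else is a routine substitution of the regularly varying orders of $Q_n$, $m_n$, and the binomial means, entirely parallel to the computations already carried out in Sections \ref{sec:gn:alphalarge}--\ref{sec:prooftaularge}.
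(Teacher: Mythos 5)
Your proposal has a genuine gap, and it is fatal to both parts of the lemma: the opening move of replacing every $B_3$-weight by the cap $Q_n$ and then treating edges as independent Bernoullis with parameter $\min\{Q_n^2/(\mu n),1\}$ only yields a nontrivial dominating graph when $Q_n\ll\sqrt{\mu n}$, i.e.\ when $(3-\gamma)/(2\alpha)<1/2$, i.e.\ $\gamma>3-\alpha$. You flag this condition yourself and hope it is a standing assumption, but it fails exactly where the lemma is used: in the proof of Theorem~\ref{thm:smalldevs} one takes $\gamma=3-\tfrac32\alpha$ with $\alpha<4/3$, so $\gamma<3-\alpha$ and $Q_n\asymp n^{3/4}$ up to logarithms, whence $Q_n^2/(\mu n)\geq 1$. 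In your dominating graph every pair of $B_3$-vertices is then connected with probability one, the graph is complete, and both the edge count and the triangle count are deterministically $\Theta(n^2)$ and $\Theta(n^3)$; stochastic domination then gives only the vacuous bound $\Prob{\cdot}\le 1$. The same degeneracy invalidates the use of Lemma~\ref{lem:lowdegtriangleedges} (which requires $w_iw_j<\mu n$) to get $a=O(n^{2-\alpha})$ in Chatterjee's inequality, and it invalidates your mean computations $\binom n3(Q_n^2/(\mu n))^3$ and $\binom n2 Q_n^2/(\mu n)$.

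The missing idea is that the rate $n^{(\gamma-1)/2}\log(n^{(\gamma-3)/2+\alpha})$ is not produced by Bernoulli concentration at all: it is the probability that the \emph{weighted empirical process} deviates. The paper's proof conditions on the event $\mathcal E_1=\{\sup_{x\in[1,Q_n]}|1-F_n(x)|/\bar F(x)\le 1+\varepsilon\}$, whose complement has probability $\exp(-n\bar F(Q_n)h(1+\varepsilon))$ by \eqref{cbtail1}; the threshold $Q_n$ in \eqref{eq:Qn} is tuned precisely so that $n\bar F(Q_n)\asymp Dn^{(\gamma-1)/2}\log(n^{(\gamma-3)/2+\alpha})$, which is the entire right-hand side of both claims. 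On $\mathcal E_1$ one replaces $\bar F_n$ by $(1+\varepsilon)\bar F$ (monotonicity in the weights is used here, but for the whole distribution function, not for a single extreme weight), so the conditional means stay at the correct orders $(1+\varepsilon)\mu n$ and $(1+\varepsilon)^3 H n^{3-3\alpha/2}$; the conditional deviation probabilities are then negligible — for edges by Lemma~\ref{lem-concentration} (giving $e^{-\Theta(n)}$), and for triangles by Lemma~\ref{lem:chatterjeelem} with the crude constant $a=3n$ from $d_{u,B_3}\le n$ (giving exponent $\Theta(n^{\gamma-1}\log n)\gg n^{(\gamma-1)/2}\log n$). Your closing observation that the Bernoulli deviation is "far smaller than the claimed bound" is correct in spirit for the conditional step, but without the empirical-process conditioning there is no mechanism in your argument that produces the claimed rate, and the unconditional coupling you propose in its place does not close.
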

\begin{proof}
	By~\eqref{cbtail1}, for all $\varepsilon>0$ and with $Q_n$ as in~\eqref{eq:Qn},
	\begin{align}
		\Prob{\mathcal{E}_1}&:=\Prob{\sup_{x\in[1, Q_n]}\frac{|1-F_n(x)|}{\bar{F}(x)}\leq (1+\varepsilon)}\nonumber\\
  & \geq 1- \exp\big(-Dh(1+\varepsilon) n^{(\gamma-1)/2}\log(n^{(\gamma-3)/2+\alpha})\big)(1+o(1)).
	\end{align}
Given any weight distribution satisfying the event $\mathcal{E}_1$, the edge count is a sum of independent Bernoulli random variables with mean at most $(1+\varepsilon)\mu n$. Thus, on $\mathcal{E}_1$, Lemma~\ref{lem-concentration} yields that for $K>(1+\varepsilon)^2\mu n$,
\begin{align}
		\Prob{E_{B_3,B_3}\geq K \mid \mathcal{E}_1 }& \leq \Prob{E_{B_3,B_3}\geq K \mid \mathcal{E}_1} \nonumber\\
  & \leq \exp\Big(-(1+\varepsilon)\mu n((1+\varepsilon)\log(1+\varepsilon)-\varepsilon)\Big)\nonumber\\
  & \leq \exp(-\sqrt{2a}n^{(\gamma-1)/2}\log(n^{(\gamma-3)/2+\alpha})),
\end{align}
for $\gamma<3$ and $n$ sufficiently large.
To obtain an upper bound for the triangle counts, we apply Lemma~\ref{lem:chatterjeelem}. Let $X_{ijk}=X'_{ijk}$ denote the indicator that $i,j,k$ forms a triangle. Now given the weight distribution, $X_{ijk}$ and $X_{uvw}$ are independent as long as $|\{i,j,k,u,v,w\}|\geq 5$. Thus, we define $X_{ijk(uvw)}=X_{ijk}$ when $|\{i,j,k,u,v,w\}|\geq 5$ and $X_{ijk(uvw)}=0$ otherwise. Then, for all $uvw$,
\begin{equation}\label{eq:dub3bound}
	\sum_{i,j,k,\in B_3}X_{ijk}\leq  d_{u,B_3}+d_{v,B_3}+d_{w,B_3}+\sum_{i,j,k,\in B_3}X_{ijk(uvw)}
\end{equation}
where $d_{u,B_3}$ denotes the degree of vertex $u$ to other $B_3$ vertices.
As $d_{u,B_3}\leq n$ for all $u$, on $\mathcal{E}'$, by~\eqref{eq:dub3bound} and Lemma~\ref{lem:chatterjeelem},
\begin{align}
	\Prob{\triangle_{B_3,B_3,B_3}\geq Kn^\gamma}& \leq \exp\Big(-\frac{Kn^\gamma}{3n}\log\Big(\frac{Kn^\gamma}{(1+\varepsilon)^3H n^{3-3\alpha/2}}-1+\frac{(1+\varepsilon)^3H n^{3-3\alpha/2}}{Kn^\gamma}\Big)\Big)\nonumber\\
 &   \leq \exp(-Dh(1+\varepsilon)n^{(\gamma-1)/2}\log(n^{(\gamma-3)/2+\alpha}))
\end{align}
for $\gamma\geq 1$ and $n$ sufficiently large, as on $\mathcal{E}_1$, the mean number of triangles is bounded by $(1+\varepsilon)^3Hn^{3-3\alpha/2}(1+o(1))$. Plugging in the value of $D$ then proves the lemma.
\end{proof}

We now investigate vertices and triangles from $B_2$ vertices. The proof of this Lemma follows a similar structure as the proof of Lemma~\ref{lem:b3triangedges}, and can be found in Appendix~\ref{app:manyhubs}.
\begin{lemma}\label{lem:b2edgetriang}
	Suppose that $\Prob{W>x} \sim C x^{-\alpha}$ and that $d_u\leq M$ for all $u\in A$. Then,
	\begin{equation}
		\Prob{E_{A,B_2}>n^{(\gamma+1)/2}\mid N(B_2)}\leq \exp\Bigg(-\frac{n^{(\gamma+1)/2}}{M} \log\Big(\frac{n^{(\gamma-1)/2}}{K_1 N(B_2)\log(n)^{-\zeta}}\Big)\Bigg),
	\end{equation}
for some $K_1>0$,
where $N({B_2})$ denotes the number of $B_2$ vertices. 
Furthermore,
\begin{equation}
	\Prob{ \triangle_{A,B_2,B_2}> n^\gamma\mid N(B_2)} \leq \exp\Big(-\frac{n^\gamma}{M^2}\log\Big(\frac{n^{\gamma-1}}{3K_1 N(B_2)^2\log(n)^{-\zeta}}\Big)\Big).
\end{equation}
\end{lemma}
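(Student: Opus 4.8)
The plan is to prove Lemma~\ref{lem:b2edgetriang} by conditioning on the weights of the $B_2$ vertices and exploiting monotonicity in exactly the same way as in Lemma~\ref{lem:b3triangedges}, combined with the $A$-degree truncation hypothesis $d_u\le M$ and the binomial tail bound of Lemma~\ref{lem-concentration} (for edges) and the Chatterjee concentration bound Lemma~\ref{lem:chatterjeelem} (for triangles). First I would observe that since both $E_{A,B_2}$ and $\triangle_{A,B_2,B_2}$ are non-decreasing in all vertex weights, and every $B_2$ vertex has weight at most $n/\log(n)^{\zeta/\alpha}$, we may replace all $B_2$ weights by this maximal value $n/\log(n)^{\zeta/\alpha}$; this only increases the relevant probabilities and removes one source of randomness. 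With the $B_2$ weights pinned at $n/\log(n)^{\zeta/\alpha}$, a given edge from a fixed vertex $u\in A$ of degree contribution $W_u$ to a fixed $B_2$-vertex is present with probability $\min\{W_u (n/\log(n)^{\zeta/\alpha})/(\mu n),1\}=\min\{W_u/(\mu\log(n)^{\zeta/\alpha}),1\}$, and these edge indicators are independent conditionally on the weights.

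For the edge bound, I would argue as follows. The expected number of $A$--$B_2$ edges, conditionally on the pinned $B_2$ weights and on the $A$-weights, is $\sum_{u\in A}\sum_{v\in B_2}\min\{W_u/(\mu\log(n)^{\zeta/\alpha}),1\}\le N(B_2)\sum_{u\in A}W_u/(\mu\log(n)^{\zeta/\alpha})$. Since each $d_u\le M$ and $W_u\le\sqrt{\mu n}$ on $A$, one gets $\sum_{u\in A}W_u = O(n\cdot\mu)$, hence the conditional mean is at most $K_1 N(B_2) n\log(n)^{-\zeta}$ for a suitable constant $K_1$ (absorbing $\mu$). Now $E_{A,B_2}$ is, conditionally, a sum of independent Bernoulli variables, and each vertex $u\in A$ contributes at most $d_u\le M$ of them — so Lemma~\ref{lem-concentration} applied at level $t=n^{(\gamma+1)/2}$, together with the fact that the "step size" is bounded by $M$ (which inflates the $\lambda$ appearing inside the logarithm by a factor $M$ when we normalise), yields
\begin{equation}
\Prob{E_{A,B_2}>n^{(\gamma+1)/2}}\le \exp\Bigl(-\tfrac{n^{(\gamma+1)/2}}{M}\log\bigl(\tfrac{n^{(\gamma+1)/2}}{M K_1 N(B_2) n\log(n)^{-\zeta}}\bigr)\Bigr),
\end{equation}
and since $n^{(\gamma+1)/2}/(Mn)=n^{(\gamma-1)/2}/M\gg n^{(\gamma-1)/2}$ up to the $M$ which can be absorbed into $K_1$, this matches the claimed bound (the precise form in the statement hides the $M$ inside $K_1$).

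For the triangle bound, I would invoke Lemma~\ref{lem:chatterjeelem} exactly as in the proof of~\eqref{eq:triangb3b3b3}: set $X_{ijk}=X'_{ijk}$ the indicator that $i\in A$, $j,k\in B_2$ form a triangle, set $X_{ijk(uvw)}=X_{ijk}$ when $|\{i,j,k\}\cap\{u,v,w\}|\le 1$ (so the triangles share at most a vertex and are conditionally independent) and $X_{ijk(uvw)}=0$ otherwise, so that hypotheses (a)--(d) hold with $a=3M^2$: indeed, if $X_{ijk}>0$ then the total is exceeded by the truncated total by at most the triangles through $i$, $j$ or $k$, each of which is at most $M\cdot M$ since the $A$-endpoint has degree $\le M$ and the two $B_2$-endpoints each see at most $M$ many $A$-neighbours — wait, more carefully, the number of $A$--$B_2$--$B_2$ triangles through a fixed vertex is bounded by $M^2$ because each of the at most $M$ edges at the $A$-vertex can close in at most $M$ ways; this gives $a=3M^2$. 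The conditional mean $\lambda$ equals the same $A$--$B_2$--$B_2$ expectation, which by a computation parallel to the edge case is $O(M N(B_2)^2 n\log(n)^{-2\zeta})$ or more crudely $O(N(B_2)^2 n\log(n)^{-\zeta})$ after absorbing constants into $K_1$; plugging $t=n^\gamma$ and $\lambda\asymp K_1 N(B_2)^2 n\log(n)^{-\zeta}$ into Lemma~\ref{lem:chatterjeelem} with $a=3M^2$ and using $\log(t/\lambda-1+\lambda/t)\ge \log(n^{\gamma-1}/(3K_1N(B_2)^2\log(n)^{-\zeta}))$ for large $n$ gives the stated bound. The main obstacle I anticipate is bookkeeping the exact constants and the role of $M$: making sure the step-size truncation in the Bernoulli sum and the $a=3M^2$ in Chatterjee's bound come out cleanly, and that the high-probability event $\mathcal E_1$ controlling the $A$-weight empirical distribution (needed to bound $\sum_{u\in A}W_u$) is folded in without changing the leading-order exponent — but all of this is routine given the template already established in Lemma~\ref{lem:b3triangedges}.
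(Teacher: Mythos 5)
Your overall skeleton matches the paper's: pin the $B_2$ weights at $n/\log(n)^{\zeta/\alpha}$ by monotonicity, bound the conditional mean of the edge/triangle count by $K_1 N(B_2) n\log(n)^{-\zeta}$ (resp.\ its square), and then apply a concentration inequality whose increment parameter reflects the degree cap $M$. Two of your steps, however, do not go through as written. For the edge count you invoke Lemma~\ref{lem-concentration} ``together with the fact that the step size is bounded by $M$, which inflates the $\lambda$ inside the logarithm''. Lemma~\ref{lem-concentration} is a Chernoff bound for sums of \emph{independent} Bernoulli variables and has no step-size parameter; if the edge indicators were genuinely independent it would yield an exponent \emph{without} the factor $1/M$. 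The entire reason the hypothesis $d_u\leq M$ appears is that one works on a degree-truncation event, which destroys that independence; the paper therefore applies Lemma~\ref{lem:chatterjeelem} with $X_{ij(uv)}=X_{ij}I(i\neq u)$ and $a=M$, condition (d) holding because the terms lost are exactly the edges at $u$, of which there are at most $d_u\leq M$. Your ``inflate $\lambda$ by $M$'' heuristic does not correspond to any inequality in either lemma. (Two smaller points: your bound $\sum_{u\in A}W_u=O(\mu n)$ is a law-of-large-numbers event for the random weights whose failure probability must be folded in, whereas the paper integrates the marginal connection probability against $F$ directly; and your mean estimate is of order $N(B_2)\,n\log(n)^{-\zeta/\alpha}$, which cannot be ``absorbed into $K_1$'' to give $N(B_2)\,n\log(n)^{-\zeta}$ since $\zeta/\alpha<\zeta$.)

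The more serious gap is in your verification of condition (d) for the triangle count. You set $X_{ijk(uvw)}=0$ whenever $|\{i,j,k\}\cap\{u,v,w\}|\geq 2$ and claim $a=3M^2$ because ``the two $B_2$-endpoints each see at most $M$ many $A$-neighbours''. The hypothesis $d_u\leq M$ caps only the degrees of $A$-vertices; a $B_2$-vertex may have order $n$ neighbours in $A$, so the number of triangles $i'vw$ with $i'\in A$ that share the $B_2$--$B_2$ edge $\{v,w\}$ is not bounded by $M^2$, and condition (d) fails for your construction. The paper avoids this by zeroing out only when the $A$-vertices coincide, i.e.\ $X_{ijk(uvw)}=X_{ijk}I(i\neq u)$: independence in (b) still holds because the only edges then shared between the two triangles are $B_2$--$B_2$ edges, which (with the weights pinned at $n/\log(n)^{\zeta/\alpha}$) are present with probability one, and the discrepancy in (d) is exactly the set of triangles whose $A$-vertex is $u$, of which there are at most $d_u^2\leq M^2$. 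With that modification your argument reduces to the paper's.
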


To prove Theorem~\ref{thm:triangldptausmall}, we finally state a lemma that bounds the degree of type $A$ vertices. Its proof can also be found in Appendix~\ref{app:manyhubs}.
\begin{lemma}\label{lem:Adegrees}
Suppose that $\Prob{W>x} \sim C x^{-\alpha}$.	Let $\mathcal{F}$ denote the event that all vertices of weight at most $ \sqrt{\mu n}$ have degrees at most
	\begin{equation}
	    M=\begin{cases}
	    4\sqrt{\mu n} & \gamma<2\nonumber\\
	    4n^{\gamma/3} & \gamma\in[2,3).
	    \end{cases}
	\end{equation}
	Then, for all $D>0$,
	\begin{equation}
		\Prob{\bar{\mathcal{F}}}\leq \exp\Big(-Dn^{(\gamma-1)/2} \log(n^{(\gamma-3)/2+\alpha})\Big).
	\end{equation}
\end{lemma}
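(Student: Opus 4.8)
The plan is to bound $\Prob{\bar{\mathcal{F}}}$ by a union bound over the $n$ vertices, reducing the claim to a single-vertex degree tail estimate to which the Chernoff bound of Lemma~\ref{lem-concentration} applies. Fix a vertex $u$ and write $d_u$ for its degree; it suffices to bound $\Prob{d_u > M,\ W_u \leq \sqrt{\mu n}}$ and multiply by $n$. First I would observe that, since the edge probability $p(w,w')=\min(ww'/(\mu n),1)$ is non-decreasing in $w$, the conditional law of $d_u$ given $W_u = w$ is stochastically non-decreasing in $w$ (couple the Bernoulli edge indicators through a common family of uniforms); hence $\Prob{d_u > M \mid W_u = w} \leq \Prob{d_u > M \mid W_u = \sqrt{\mu n}}$ for every $w \leq \sqrt{\mu n}$, and it remains to control the right-hand side.

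Given $W_u = \sqrt{\mu n}$, the edge indicators $(\ind{u\sim j})_{j\neq u}$ are independent, and --- integrating out the i.i.d.\ weights $W_j$ --- each is Bernoulli with parameter $q_n := \Exp{\min(W/\sqrt{\mu n},1)}$, so that $d_u$ given $W_u=\sqrt{\mu n}$ has a $\mathrm{Bin}(n-1,q_n)$ law. Using $\alpha\in(1,2)$ one has $\Exp{W\ind{W\leq t}}\to\mu$ with error $\bigO{t^{1-\alpha}}$, while $\bar F(t)=\bigO{t^{-\alpha}}$, both $o(1/t)$; hence $q_n=\mu/\sqrt{\mu n}+o(n^{-1/2})$ and $\lambda_n := (n-1)q_n = \sqrt{\mu n}\,(1+o(1))$.

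It then remains to apply Lemma~\ref{lem-concentration} with mean $\lambda_n\sim\sqrt{\mu n}$, handling the two regimes of $M$ separately. For $\gamma<2$, $M=4\sqrt{\mu n}$, so $M=(1+b_n)\lambda_n$ with $b_n\to 3$, and Lemma~\ref{lem-concentration} gives $\Prob{d_u>M\mid W_u=\sqrt{\mu n}}\leq e^{-\lambda_n I_B(b_n)}\leq e^{-c\sqrt n}$ for some $c>0$ and $n$ large; the union bound yields $\Prob{\bar{\mathcal{F}}}\leq n e^{-c\sqrt n}$, which is $\leq\exp\big(-D n^{(\gamma-1)/2}\log(n^{(\gamma-3)/2+\alpha})\big)$ for every $D>0$ and $n$ large, because $(\gamma-1)/2<1/2$ renders the target exponent $o(\sqrt n)$. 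For $\gamma\in[2,3)$, $M=4n^{\gamma/3}$ and $M/\lambda_n=4\mu^{-1/2}n^{\gamma/3-1/2}(1+o(1))\to\infty$, so writing $M=(1+b_n)\lambda_n$ and using $I_B(b)=(1+b)\log(1+b)-b\sim(1+b)\log(1+b)$ one gets $\lambda_n I_B(b_n)\sim 4\big(\tfrac{\gamma}{3}-\tfrac{1}{2}\big)n^{\gamma/3}\log n$, hence $\Prob{d_u>M\mid W_u=\sqrt{\mu n}}\leq\exp(-c'n^{\gamma/3}\log n)$ for some $c'>0$ and $\Prob{\bar{\mathcal{F}}}\leq n\exp(-c'n^{\gamma/3}\log n)$; this again beats the target bound for every $D$ since $\gamma/3>(\gamma-1)/2$ whenever $\gamma<3$, so $n^{\gamma/3}\log n$ dominates $n^{(\gamma-1)/2}\log(n^{(\gamma-3)/2+\alpha})$.

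The only mildly delicate point I anticipate is the identification $\lambda_n\sim\sqrt{\mu n}$, which uses $\alpha>1$ both for the convergence $\Exp{W\ind{W\leq t}}\to\mu$ and to make the tail-mass contribution $n\bar F(\sqrt{\mu n})=\bigO{n^{1-\alpha/2}}$ negligible relative to $\sqrt n$; everything else is a routine application of Lemma~\ref{lem-concentration} together with the elementary exponent comparisons above.
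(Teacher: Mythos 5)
Your proof is correct, and it takes a genuinely simpler route than the paper's at one point. The shared skeleton is the same: stochastically dominate the degree of a weight-$\leq\sqrt{\mu n}$ vertex by that of a weight-$\sqrt{\mu n}$ vertex, bound the resulting sum of independent Bernoullis via the Chernoff-type Lemma~\ref{lem-concentration}, and finish with a union bound over the $n$ vertices. The difference is in how the randomness of the \emph{other} weights is handled. The paper conditions on $W_1,\dots,W_n$, so the conditional mean degree $\sum_j \min(W_j/\sqrt{\mu n},1)$ is itself random, and an extra concentration step (Bernstein's inequality for the truncated weight sum) is needed to show this mean is at most $2\sqrt{\mu n}$ (resp.\ $n^{\gamma/3}$) with the required probability before Lemma~\ref{lem-concentration} can be applied. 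You instead marginalize over the i.i.d.\ weights $W_j$: conditional on $W_u=\sqrt{\mu n}$, each indicator $\ind{u\sim j}$ is a function of the independent pair $(W_j,U_{uj})$, so the indicators are i.i.d.\ Bernoulli with the deterministic parameter $q_n=\Exp{\min(W/\sqrt{\mu n},1)}$, and $d_u$ is exactly $\mathrm{Bin}(n-1,q_n)$ with $\lambda_n=(n-1)q_n\sim\sqrt{\mu n}$ (your use of $\alpha>1$ to control both $\Exp{W\ind{W>t}}=\bigO{t^{1-\alpha}}$ and $\bar F(\sqrt{\mu n})$ is exactly the point that needs checking, and it checks out). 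This removes the Bernstein step entirely at no cost: the lemma is an unconditional probability bound and is only invoked unconditionally in the proof of Theorem~\ref{thm:triangldptausmall}, so nothing downstream requires the paper's conditional formulation. Your exponent comparisons in both regimes ($n^{(\gamma-1)/2}\log n=o(\sqrt n)$ for $\gamma<2$; $\gamma/3>(\gamma-1)/2$ for $\gamma<3$, with $\lambda_n I_B(b_n)\sim M\log(M/\lambda_n)$ in the superlinear regime) match what the paper needs and are carried out correctly.
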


\subsection{Proof of Theorem~\ref{thm:smalldevs}}\label{sec:proofmoderate}
\begin{proof}[Proof of Theorem~\ref{thm:smalldevs}] To prove this theorem, we lower bound the number of triangles with triangles between two weight $n$ vertices and one other vertex, and we upper bound by considering triangles between vertices of weights in classes $B_1, B_2, B_3$, and split into all possible cases, as different types of triangles can be upper bounded by different terms. 

 \textit{Lower bound.}
 As a lower bound of $\triangle_n$, we compute the number of triangles with at least one vertex of weight $> \mu n$. Note that as the minimal weight is at least 1, vertices of weight $\mu n$ connect to all other vertices with probability 1. The probability that at least $\sqrt{2 a}n^{1-\alpha3/4}$ vertices of weights at least $\mu n$ are present can be bounded by Lemma~\ref{lem:nvertices} as
\begin{align}
    \Prob{\sqrt{2 a}n^{1-\alpha3/4} \text{ vertices of weight }>\mu n}
    & \geq \frac{1}{\sqrt{2n}}\exp\Big(-{\sqrt{2 a}n^{1-\alpha3/4}}\log\big(n^{\alpha/4}\big)\Big).
\end{align}
Now, $\sqrt{2 a}n^{1-\alpha3/4}$ vertices of weight at least $\mu n$ generate $n(\sqrt{2 a}n^{1-\alpha3/4})^2/2=a n^{3-3\alpha/2}$ triangles with probability one. Indeed, every pair of 2 vertices of weight at least $\mu n$ forms a triangle with any of the other $n$ vertices, creating $a n^{3-3\alpha/2}$ triangles. Furthermore, the vertices of weight at most $n^{1/\alpha}$ generate $C^3Hn^{3-3\alpha/2}(1+o(1))$ 
triangles with high probability~\cite{hofstad2017b}. As these triangles use different edges, their presence is independent from the presence of the triangles with the $\mu n$ weight vertices.
Therefore, 
\begin{equation}\label{eq:trianglbdetailed}
     \Prob{\triangle_n>n^{3-3\alpha/2}(C^3H+a)}\geq  \frac{1}{\sqrt{2n}}\exp\Big(-{\sqrt{2 a}n^{1-\alpha3/4}}\log\big(n^{\alpha/4}\big)\Big)(1+o(1)).
\end{equation} 
Thus,
\begin{equation}
    \liminf_{n\to\infty}\frac {\log  \Prob{\triangle_n>n^{3-3\alpha/2}(C^3H+a)}}{n^{1-\alpha3/4}\log(n)}
    \geq \sqrt{2 a}\frac{-\alpha}{4}.
\end{equation}

\textit{Upper bounds.}
When $\gamma=3-3\alpha/2$, $B_3$ vertices have weights up to $Dn^{3/4}\log(n^{\alpha/4})^{1/\alpha}$. We denote $B'_3=B_3\cap B$. Then, $B_3'\subseteq B$ and $A\subseteq B_3$, as illustrated in Figure~\ref{fig:case1}. Therefore, we distinguish the types of triangles $B_3B_3B_3$, $B_3B_3B_i$, $B_1B_1B_i$, $B_3'B_iB_j$,  A$B_iB_j$, $B_iB_jB_k$ for $i,j,k\in\{1,2\}$ and bound the number of these triangles one by one. 

\textit{$B_3B_3B_3$ triangles. }
For $B_3B_3B_3$ triangles, we use Lemma~\ref{lem:b3triangedges} with $D=\sqrt{2 a}/h(1+\varepsilon)$ to obtain that for $\delta>\varepsilon$,
\begin{equation}\label{eq:b3deltasmall}
    \Prob{\triangle_{B_3B_3B_3}>(C^3H+a)n^{3-3\alpha/2}}\leq \exp\Big(-{\sqrt{2 a}n^{1-\alpha3/4}}\log\big(n^{\alpha/4}\big)\Big).
\end{equation}

\textit{$B_3B_3B_i$ triangles for $i \in \{1,2\}$. }
We bound the number of $B_3B_3B_i$ triangles for $i=1,2$ by the number of $B_i$ vertices times the number of edges between $B_3$ vertices. 
By~\eqref{eq:Nb1b2} with $\gamma=3-3\alpha/2$ and $D =\sqrt{2 a}/h(1+\varepsilon)$,
\begin{equation}\label{eq:Nb1b2A}
    \Prob{N(B_1\cup B_2)>K}\leq \exp\Big(-K\log\Big(\frac{K}{\sqrt{2 a}h(1+\varepsilon)^{-1}n^{1-\alpha3/4}\log(n^{\alpha/4})}\Big)\Big).
\end{equation}
By~\eqref{eq:edb3b3} with $D=\sqrt{2 a}$
\begin{equation}
    \Prob{E_{B_3,B_3}> n^{1-3\alpha/8}}\leq \exp(-\sqrt{2 a}n^{(\gamma-1)/2}\log(\sqrt{2 a} n^{(\gamma-3)/2+\alpha})).
\end{equation}
Then, by~\eqref{eq:Nb1b2A},
\begin{align}\label{eq:triangab2b3deltasmall}
    \Prob{\triangle_{B_3B_3B_i}>\varepsilon n^{3-3\alpha/2}\mid E_{B_3,B_3}< n^{1-3\alpha/8}}& \leq \Prob{N(B_1\cup B_2)>\varepsilon n^{2-9\alpha/8}}\nonumber\\
    & \leq \exp\Big(-\varepsilon n^{2-9\alpha/8} \log(\delta^{-\alpha/2}n^{2-9\alpha/8-1-\alpha3/4}))\log(n^{\alpha/4})^{-1}\Big)\nonumber\\
    & \leq \exp(-\sqrt{2 a}n^{(\gamma-1)/2}\log(n^{(\gamma-3)/2+\alpha}))
\end{align}
for $n$ sufficiently large and $\alpha<4/3$. 

\textit{$B_iB_1B_1$ triangles for $i\in\{1,2,3\}$. }
We now bound the number of $B_iB_1B_1$ type triangles $i\in\{1,2,3\}$ by $n$ times the number of pairs of two $B_1$ vertices. This is the number of ways to choose 2 type $B_1$ vertices, and one other vertex. Thus, by~\eqref{eq:Nb1},
\begin{align}\label{eq:triangab1b1deltasmall}
    \Prob{\triangle_{B_iB_1B_1}>an^{3-3\alpha/2}}& \leq \Prob{N(B_1)>\sqrt{2 a}n^{1-\alpha3/4}}\nonumber\\
    & \leq \exp\Big(-\sqrt{2 a}n^{1-\alpha3/4}\log\Big(\frac{\sqrt{2 a}n^{1-\alpha3/4}}{n^{1-\alpha}\log(n)^\zeta}\Big)\Big).
\end{align}
Thus, 
\begin{equation}
    \limsup_{n\to\infty}\frac{\log\Big(\Prob{\triangle_{B_iB_1B_1}>an^{3-3\alpha/2}}\Big)}{n^{1-\alpha3/4}\log(n)}\leq -\sqrt{2a}(1-\frac{3\alpha}{4})+(1-\alpha) = \sqrt{2a}\frac{-\alpha}{4}.
\end{equation}

\textit{$B_iB_jB_k$ triangles for $i,j,k\in\{1,2\}$.}
Any triple of vertices in $B_1\cup B_2$ have weights at least $\sqrt{\mu n}$ and thus form a triangle with probability one. Thus, an upper bound for these triangles is the number of vertices in $B_2\cup B_1$ to the power three. 
\begin{align}
&\Prob{\triangle_{B_iB_jB_k}>\varepsilon n^{(2-\alpha)3/2}} \leq     \Prob{N(B_2\cup B_1)^3 >\varepsilon n^{(2-\alpha)3/2}  }\nonumber\\
& = \Prob{N(B_2\cup B_1) >\varepsilon^{1/3} n^{(2-\alpha)/2} }.
\end{align}
Now by~\eqref{eq:Nb1b2A} 
\begin{align}
	& \Prob{N(B_2\cup B_1)>\varepsilon^{1/3} n^{(2-\alpha)/2}}\nonumber\\
 &\leq \exp\Big(-\varepsilon^{1/3} n^{(2-\alpha)/2}\log\Big(\frac{\varepsilon^{1/3} n^{(2-\alpha)/2}}{\sqrt{2 a}h(1+\varepsilon)^{-1}n^{1-\alpha3/4}\log(n^{\alpha/4})}\Big)\Big)\nonumber\\
	& \leq \exp\Big(-\sqrt{2 a} n^{1-\alpha3/4} \log(n^{\alpha/4})\Big),
\end{align}
when $\zeta>1$ and $n$ is sufficiently large,
since $(2-\alpha)/2>1-\alpha3/4$ for $1<\alpha<4/3$.
Thus, for $\varepsilon, \delta>0$, and $n$ sufficiently large,
\begin{equation}\label{eq:triangbbbdeltasmall}
    \Prob{\triangle_{B_iB_jB_k}>\varepsilon n^{3-3\alpha/2}}\leq \exp\Big(-{\sqrt{2 a}n^{1-\alpha3/4}}\log\big(n^{\alpha/4}\big)\Big)
\end{equation}
for $i,j,k\in\{1,2\}$.

\textit{$B_3'B_iB_j$ triangles $i,j\in\{1,2\}$.}
Again, any triple of vertices in $B_3'B_iB_j$ have weights at least $\sqrt{\mu n}$ and thus form a triangle with probability one. Thus, an upper bound for these triangles is the number of vertices of weight at least $\sqrt{\mu n}$ squared times the number of vertices in $B_2\cup B_1$. 
\begin{align}
&\Prob{\triangle_{B_3'B_iB_j}>\varepsilon n^{(2-\alpha)3/2}} \leq     \Prob{N(B_2\cup B_1)N(\geq \sqrt{\mu n})^2 >\varepsilon n^{(2-\alpha)/2}  },
\end{align}
where $N(\geq \sqrt{\mu n})$ denotes the number of vertices with weight at least $\sqrt{\mu n}$.
Now by~\eqref{eq:Nb1b2A} 
\begin{align}\label{eq:Nb2detailed}
	\Prob{\mathcal{F}}:=\Prob{N(B_2\cup B_1)>n^{1-\alpha3/4}\log(n)^{\zeta}}
	& \leq \exp\Big(-\sqrt{2 a} n^{1-\alpha3/4} \log(n^{\alpha/4})\Big),
\end{align}
when $\zeta>1$ and $n$ is sufficiently large. 
On the complement of this event, Lemma~\ref{lem:nvertices} yields
\begin{align}
&\Prob{\triangle_{B_3'B_iB_j}>\varepsilon n^{(2-\alpha)3/2}\mid \bar{\mathcal{F}}} 
 \leq \Prob{N(B)^2 >\varepsilon n^{1-3\alpha/8}\log(n)^{-\zeta}\mid \bar{\mathcal{F}}}\nonumber\\
    & \leq \exp\Big(-\varepsilon n^{1-3\alpha/8}\log(n)^{-\zeta}\log(\frac{\varepsilon n^{1-3\alpha/8}\log(n)^{-\zeta}}{n^{(2-\alpha)/2}})\Big)\nonumber\\
    & \leq \exp\Big(-{\sqrt{2 a}n^{1-\alpha3/4}}\log\big(n^{\alpha/4}\big)\Big),
\end{align}
since $1-3/8\alpha>1-\alpha3/4$ for $1<\alpha<4/3$.
Thus, for $\varepsilon, \delta>0$, and $n$ sufficiently large,
\begin{equation}\label{eq:triangbbb3deltasmall}
    \Prob{\triangle_{B_3'B_iB_j}>\varepsilon n^{3-3\alpha/2}}\leq 2\exp\Big(-{\sqrt{2 a}n^{1-\alpha3/4}}\log\big(n^{\alpha/4}\big)\Big).
\end{equation}

\textit{A$B_2B_2$ triangles.}
We now consider A$B_2B_2$ triangles.
By Lemma~\ref{lem:b2edgetriang}, for all $\varepsilon>0$
\begin{align}\label{eq:b2deltasmall}
    \Prob{ \triangle_{AB_2B_2}> \varepsilon n^{3-3\alpha/2}}& \leq \exp\Big(-\varepsilon \frac{n^{3-3\alpha/2}}{K^2n}\log\Big(\frac{\varepsilon n^{3-3\alpha/2}}{3K_1 N(B_2)^2n\log(n)^{-\zeta}}\Big)\Big)\nonumber\\
    & \leq \exp\Big(-\sqrt{2 a}n^{2-3/2\alpha} \log(n^{\alpha/4})\Big),
\end{align}
for $n$ sufficiently large when $N(B_2)<Cn^{1-\alpha3/4}\log(n)^{\zeta/2}$ for some $C>0$.
Now by~\eqref{eq:Nb2detailed}
\begin{align}
    \Prob{N(B_2)>n^{1-\alpha3/4}\log(n)^{\zeta/2}}
     & \leq \exp\Big(-\sqrt{2 a} n^{1-\alpha3/4} \log(n^{\alpha/4})\Big),
\end{align}
for $\zeta>2$.

\textit{$AB_2B_1$ triangles.}
We now bound the number of $AB_2B_1$ triangles by $N(B_1)$ times the number of $AB_2$ edges. 
By Lemma~\ref{lem:b2edgetriang} with $M=\max(\sqrt{n},n^{\gamma/3})$,
\begin{equation}
    \Prob{E_{A,B_2} > n^{1-3\alpha/8}}
    \leq \exp\Big(-\sqrt{2 a} n^{1-\alpha3/4} \log(n^{\alpha/4})\Big)
\end{equation}
as long as $N(B_2)\leq n^{1-\alpha3/4}\log(n)^{\zeta}$, which happens with probability~\eqref{eq:Nb2detailed}.
Thus, by~\eqref{eq:Nb1},
\begin{align}\label{eq:triangab2b1deltasmall}
    \Prob{\triangle_{AB_1B_2}>\varepsilon n^{3-3\alpha/2}}& \leq \Prob{N(B_1)E_{AB_2}>\varepsilon n^{3-3\alpha/2}}\nonumber\\
    & \leq \Prob{N(B_1)>\varepsilon n^{2-9\alpha/8}}\nonumber\\
    & \leq \exp\Big(-\sqrt{2 a} n^{1-\alpha3/4} \log(n^{\alpha/4})\Big),
\end{align}
for $\alpha<4/3$ and $\varepsilon>0$. 





Thus,~\eqref{eq:b3deltasmall},~\eqref{eq:triangab2b3deltasmall},~\eqref{eq:triangbbbdeltasmall},~\eqref{eq:triangbbb3deltasmall} ~\eqref{eq:b2deltasmall} and~\eqref{eq:triangab2b1deltasmall} yield that
\begin{align}
    & \Prob{\sum_{\{i,j,k\}\neq \{3,3,3\}, \{i,1,1\}}\triangle_{B_iB_jB_k}>\varepsilon n^{3-3\alpha/2}} 
    \leq K \exp\Big(-\sqrt{2 a} n^{1-\alpha3/4} \log(n^{\alpha/4})\Big),
\end{align}
for any $\varepsilon>0$ and some $K>0$. Combining this with~\eqref{eq:b3deltasmall} and~\eqref{eq:triangab1b1deltasmall}yields that for $\delta>\varepsilon$,
\begin{align}
\Prob{\triangle_n>(H+a) n^{3-3\alpha/2}}& \leq \Prob{\triangle_{B_3B_3B_3}+\triangle_{B_1B_1B_i}>(C^3H+a-\varepsilon) n^{3-3\alpha/2}}\nonumber\\
& \quad +K \exp\Big(-\sqrt{2 a} n^{1-\alpha3/4} \log(n^{\alpha/4})\Big)\nonumber\\
    & \leq (K+1) \exp\Big(-\sqrt{2 a} n^{1-\alpha3/4} \log(n^{\alpha/4})\Big) \nonumber\\
    & \quad + \Prob{\triangle_{B_iB_1B_1}>(C^3H+a-2\varepsilon) n^{3-3\alpha/2}}.
\end{align}
Thus,
\begin{equation}\label{eq:triangabbfinaldeltasmall}
     \limsup_{n\to\infty}\frac{\log\Big(\Prob{\triangle_n>(C^3H+a) n^{3-3\alpha/2}}\Big)}{n^{1-\alpha3/4}\log(n)}\leq \sqrt{2a-2\varepsilon}\Big(\frac{-\alpha}{4}\Big).
\end{equation}
Letting $\varepsilon\downarrow 0$ yields the result. 
\end{proof}

\paragraph{Acknowledgements.} C. Stegehuis was supported by NWO VENI grant 202.001 and NWO M2 grant 0.379.


\bibliographystyle{abbrv}
\bibliography{references,references2}

\newcommand{\noop}[1]{}
\begin{thebibliography}{10}

\bibitem{AlonSpencer}
N.~Alon and J.~H. Spencer.
\newblock {\em The probabilistic method}.
\newblock Wiley Series in Discrete Mathematics and Optimization. John Wiley \&
  Sons, Inc., Hoboken, NJ, fourth edition, 2016.

\bibitem{andreis2021}
L.~Andreis, W.~König, H.~Langhammer, and R.~I.~A. Patterson.
\newblock A large-deviations principle for all the components in a sparse
  inhomogeneous random graph.
\newblock {\em arXiv:2111.13200}.

\bibitem{ash2012information}
R.~B. Ash.
\newblock {\em Information theory}.
\newblock Courier Corporation, 2012.

\bibitem{Augeri2020}
F.~Augeri.
\newblock Nonlinear large deviation bounds with applications to {W}igner
  matrices and sparse {E}rdos-{R}\'{e}nyi graphs.
\newblock {\em Ann. Probab.}, 48(5):2404--2448, 2020.

\bibitem{ayan}
A.~Bhattacharya, Z.~Palmowski, and B.~Zwart.
\newblock Persistence of heavy-tailed sample averages: principle of infinitely
  many big jumps.
\newblock {\em Electron. J. Probab.}, 27:Paper No. 50, 25, 2022.

\bibitem{bgt87}
N.~H. Bingham, C.~M. Goldie, and J.~L. Teugels.
\newblock {\em Regular variation}, volume~27 of {\em Encyclopedia of
  Mathematics and its Applications}.
\newblock Cambridge University Press, Cambridge, 1987.

\bibitem{boguna2003}
M.~Bogu\~n\'a and R.~Pastor-Satorras.
\newblock Class of correlated random networks with hidden variables.
\newblock {\em Phys. Rev. E}, 68:036112, 2003.

\bibitem{chakrabarty2021}
A.~Chakrabarty, R.~S. Hazra, F.~den Hollander, and M.~Sfragara.
\newblock Large deviation principle for the maximal eigenvalue of inhomogeneous
  erd{\H{o}}s-r{\'{e}}nyi random graphs.
\newblock {\em Journal of Theoretical Probability}, 35(4):2413--2441.

\bibitem{chatterjee2012missing}
S.~Chatterjee.
\newblock The missing log in large deviations for triangle counts.
\newblock {\em Random Structures \& Algorithms}, 40(4):437--451, 2012.

\bibitem{ChatterjeeDembo2016}
S.~Chatterjee and A.~Dembo.
\newblock Nonlinear large deviations.
\newblock {\em Adv. Math.}, 299:396--450, 2016.

\bibitem{Litvak2017}
N.~Chen, N.~Litvak, and M.~Olvera-Cravioto.
\newblock Generalized {P}age{R}ank on directed configuration networks.
\newblock {\em Random Structures Algorithms}, 51(2):237--274, 2017.

\bibitem{chung2002}
F.~Chung and L.~Lu.
\newblock The average distances in random graphs with given expected degrees.
\newblock {\em Proc. Natl. Acad. Sci. USA}, 99(25):15879--15882, 2002.

\bibitem{Collamore2018}
J.~F. Collamore and S.~Mentemeier.
\newblock Large excursions and conditioned laws for recursive sequences
  generated by random matrices.
\newblock {\em Ann. Probab.}, 46(4):2064--2120, 2018.

\bibitem{demarco2011}
B.~DeMarco and J.~Kahn.
\newblock Upper tails for triangles.
\newblock {\em Random Structures {\&} Algorithms}, 40(4):452--459.

\bibitem{dhara2019}
S.~Dhara, R.~van~der Hofstad, and J.~S.~H. van Leeuwaarden.
\newblock Critical percolation on scale-free random graphs: New universality
  class for the configuration model.
\newblock {\em Communications in Mathematical Physics}, 382(1):123--171, feb
  2021.

\bibitem{dommers2018}
S.~Dommers, C.~Giardin{\`{a}}, C.~Giberti, and R.~van~der Hofstad.
\newblock Large deviations for the annealed ising model on inhomogeneous random
  graphs: Spins and degrees.
\newblock 173(3-4):1045--1081.

\bibitem{EmbrechtsGoldie}
P.~Embrechts and C.~M. Goldie.
\newblock On closure and factorization properties of subexponential and related
  distributions.
\newblock {\em J. Austral. Math. Soc. Ser. A}, 29(2):243--256, 1980.

\bibitem{fosskorshunov2012}
S.~Foss and D.~Korshunov.
\newblock On large delays in multi-server queues with heavy tails.
\newblock {\em Mathematics of Operations Research}, 37(2):201--218, 2012.

\bibitem{friedrich2015a}
T.~Friedrich and A.~Krohmer.
\newblock Parameterized clique on inhomogeneous random graphs.
\newblock {\em Discrete Appl. Math.}, 184:130--138, 2015.

\bibitem{heydari2017}
H.~Heydari and S.~M. Taheri.
\newblock Distributed maximal independent set on inhomogeneous random graphs.
\newblock In {\em 2017 2nd Conference on Swarm Intelligence and Evolutionary
  Computation ({CSIEC})}. {IEEE}, 2017.

\bibitem{hofstad2017b}
R.~{\swap{Hofstad}{van der }}, A.~J. E.~M. Janssen, J.~S.~H. van Leeuwaarden,
  and C.~Stegehuis.
\newblock Local clustering in scale-free networks with hidden variables.
\newblock {\em Phys. Rev. E}, 95(2):022307, 2017.

\bibitem{hofstad2017}
R.~{\swap{Hofstad}{van der }}, P.~van~der Hoorn, N.~Litvak, and C.~Stegehuis.
\newblock Limit theorems for assortativity and clustering in null models for
  scale-free networks.
\newblock {\em Advances in Applied Probability}, 52(4):1035–1084, 2020.

\bibitem{hofstad2017d}
R.~{\swap{Hofstad}{van der }}, J.~S.~H. van Leeuwaarden, and C.~Stegehuis.
\newblock Optimal subgraph structures in scale-free configuration models.
\newblock {\em The Annals of Applied Probability}, 31(2), 2021.

\bibitem{janson2004}
S.~Janson, K.~Oleszkiewicz, and A.~Ruci{\'{n}}ski.
\newblock Upper tails for subgraph counts in random graphs.
\newblock {\em Israel Journal of Mathematics}, 142(1):61--92, dec 2004.

\bibitem{janson2004a}
S.~Janson and A.~Ruci{\'{n}}ski.
\newblock The deletion method for upper tail estimates.
\newblock {\em Combinatorica}, 24(4):615--640.

\bibitem{janssen2019}
A.~J. E.~M. Janssen, J.~S.~H. van Leeuwaarden, and S.~Shneer.
\newblock Counting cliques and cycles in scale-free inhomogeneous random
  graphs.
\newblock {\em Journal of Statistical Physics}, 175(1):161--184, feb 2019.

\bibitem{kerriou2022}
C.~Kerriou and P.~Mörters.
\newblock The fewest-big-jumps principle and an application to random graphs.
\newblock 2022.

\bibitem{kim2004}
J.~H. Kim and V.~H. Vu.
\newblock Divide and conquer martingales and the number of triangles in a
  random graph.
\newblock {\em Random Structures and Algorithms}, 24(2):166--174, 2004.

\bibitem{Mikosch2013}
T.~Mikosch and O.~Wintenberger.
\newblock Precise large deviations for dependent regularly varying sequences.
\newblock {\em Probability Theory Related Fields}, 156(3-4):851--887, 2013.

\bibitem{Nagaev1969}
A.~V. Nagaev.
\newblock Limit theorems that take into account large deviations when
  {C}ram\'{e}r's condition is violated.
\newblock {\em Izv. Akad. Nauk UzSSR Ser. Fiz.-Mat. Nauk}, 13(6):17--22, 1969.

\bibitem{oliveira2019}
R.~I. Oliveira and G.~H. Reis.
\newblock Interacting diffusions on random graphs with diverging average
  degrees: Hydrodynamics and large deviations.
\newblock {\em Journal of Statistical Physics}, 176(5):1057--1087.

\bibitem{Mariana2021}
M.~Olvera-Cravioto.
\newblock Page{R}ank's behavior under degree correlations.
\newblock {\em Ann. Appl. Probab.}, 31(3):1403--1442, 2021.

\bibitem{resnick1999}
S.~Resnick and G.~Samorodnitsky.
\newblock Activity periods of an infinite server queue and performance of
  certain heavy tailed fluid queues.
\newblock {\em Queueing Systems Theory Appl.}, 33(1-3):43--71, 1999.
\newblock Queues with heavy-tailed distributions.

\bibitem{rheeblanchetzwart2016}
C.-H. Rhee, J.~Blanchet, and B.~Zwart.
\newblock Sample path large deviations for lévy processes and random walks
  with regularly varying increments.
\newblock {\em Annals of Probability}, 6(47):3551--3605, 2019.

\bibitem{stegehuis2017b}
C.~Stegehuis.
\newblock Degree correlations in scale-free random graph models.
\newblock {\em Journal of Applied Probability}, 56(3):672–700, 2019.

\bibitem{stegehuis2019b}
C.~Stegehuis, R.~van~der Hofstad, and J.~S.~H. van Leeuwaarden.
\newblock Variational principle for scale-free network motifs.
\newblock {\em Scientific Reports}, 9(1):6762, 2019.

\bibitem{stegehuis2022scale}
C.~Stegehuis and B.~Zwart.
\newblock Scale-free graphs with many edges.
\newblock {\em arXiv:2212.05907}, 2022.

\bibitem{vazquez2002}
A.~V{\'a}zquez, R.~Pastor-Satorras, and A.~Vespignani.
\newblock Large-scale topological and dynamical properties of the internet.
\newblock {\em Phys. Rev. E}, 65:066130, 2002.

\bibitem{Wellner1978}
J.~A. Wellner.
\newblock Limit theorems for the ratio of the empirical distribution function
  to the true distribution function.
\newblock {\em Z. Wahrsch. Verw. Gebiete}, 45(1):73--88, 1978.

\bibitem{yao2017}
D.~Yao, P.~van~der Hoorn, and N.~Litvak.
\newblock Average nearest neighbor degrees in scale-free networks.
\newblock {\em Internet Mathematics}, 2018.

\bibitem{yin2019a}
H.~Yin, A.~R. Benson, and J.~Ugander.
\newblock Measuring directed triadic closure with closure coefficients.
\newblock {\em Network Science}, 8(4):551–573, 2020.

\bibitem{zwart2004}
B.~Zwart, S.~Borst, and M.~Mandjes.
\newblock Exact asymptotics for fluid queues fed by multiple heavy-tailed
  on–off flows.
\newblock {\em The Annals of Applied Probability}, 14(2):903--957, May 2004.

\end{thebibliography}



\appendix

\section{Additional proofs for Sections \ref{sec:intro}--\ref{sec:prooftaularge}}

\label{app:sec14appendix}

\begin{proof}[Proof of Lemma \ref{lemma-mean}]
Let $$h(u,v,w)= \begin{cases}
    \frac{8uvw}{\mu^3} & uv, vw, uw<\mu \\
    \frac{2v}{\mu^2} & uv, vw<\mu , uw\geq \mu \\
    \frac{2u}{\mu2} & uv, uw<\mu , vw\geq \mu \\
    \frac{2w}{\mu^2} & uw, vw<\mu , uv\geq \mu \\
    0 & \text{else}.
\end{cases}$$
Then, we can write 
\begin{align*}
    \frac{6m_n }{n^3 \bar F(\sqrt{n})^3 } &= \frac {1}{\bar F(\sqrt{n})^3} \int_{x=0}^\infty\int_{y=0}^\infty\int_{z=0}^\infty f_n(x,y,z) dF(x)dF(y)dF(z)\\
    &= \int_{u=0}^\infty\int_{v=0}^\infty\int_{w=0}^\infty f_1(u,v,w) d\hat F_n(u)d \hat F_n(v)d\hat F_n(w)\\
    &= \int_{u=0}^\infty\int_{v=0}^\infty\int_{w=0}^\infty h(u,v,w) \frac{\bar F(u\sqrt{n})}{\bar F(\sqrt{n})} \frac{\bar F(v\sqrt{n})}{\bar F(\sqrt{n})} \frac{\bar F(w\sqrt{n})}{\bar F(\sqrt{n})} dudvdw,
\end{align*}
where we made the transformation $u=x/\sqrt{n}$ (and similar for $(v,w)$) in the third step, with $\hat F_n(u) = F(\sqrt{n} u) / \bar F(\sqrt{n})$.
To show this integral converges we use the Potter bounds, which imply that for each $\delta>0$ there exists a constant $M$ such that 
$\frac{\bar F(u\sqrt{n})}{\bar F(\sqrt{n})} \leq M u^{-\alpha-\delta}, u<1$ and $\frac{\bar F(u\sqrt{n})}{\bar F(\sqrt{n})} \leq M u^{-\alpha+\delta}, u>1$.
Define the function $d(u)= M u^{-\alpha-\delta} I(u\leq 1) + M u^{-\alpha+\delta} I(u>1)$. Since for $\alpha \in (1,2)$, the integral 
\[
\int_{v=0}^\infty\int_{w=0}^\infty h(u,v,w) d(u) d(v)d(w) dudvdw
\]
converges for $\delta$ sufficiently small, we can use dominated converge
to conclude 
\begin{align*}
        \frac{m_n }{n^3 \bar F(\sqrt{n})^3 } &\rightarrow 
        \frac 16 \int_{u=0}^\infty\int_{v=0}^\infty\int_{w=0}^\infty h(u,v,w) u^{-\alpha} v^{-\alpha}w^{-\alpha}dudvdw\\
        &=\frac{\alpha^3}{6} \int_{u=0}^\infty\int_{v=0}^\infty\int_{w=0}^\infty f_1(u,v,w)  u^{-\alpha-1} v^{-\alpha-1} w^{-\alpha-1} dudvdw.
\end{align*}
\end{proof}

\begin{proof}[Proof of Lemma \ref{lemma-can}]
Using Lemma \ref{lemma-sb} we know that, for a sequence $b(n)$ regularly varying of index $\alpha_b$,
\begin{equation}
n^2\int_0^\infty \int_0^\infty  f_n(x,y,b(n))dF(x)dF(y) \sim K_2  \frac {n^3} {(b(n))^2} \bar F (n/b(n))^2.
\end{equation}
Now, choose $b$ such that $K_2  \frac {n^3}{(b(n))^2} \bar F (n/b(n))^2 = a m_n$. 
Dividing both sides of this equation with $n$, and noting that $y \bar F(y)$ has an asymptotic inverse $h(y)$ which is regularly varying of index $-1/(\alpha-1)$ (cf.\ \cite{bgt87}, Section 1.7), we see that $b(n)\sim  n/h(\sqrt{ a  \mu n / (n K_2)})$. Since $m_n$ is regularly varying of index $3-\alpha 3/2$, $b$ is regularly varying of index $\beta>1/2$. 
The proof is now completed by observing that $c_a(n)\sim b(n)$. The asymptotic form of $c_a(n)$ and the dependence on $a$ now follows straightforwardly.
\end{proof}

\begin{proof}[Proof of Lemma \ref{lemma-sbc}]
%
 For large enough $n$, $b(n)c(n)\gg n$ so that we can write 
\[
\int_0^\infty f_n(x,b(n),c(n))dF(x) = \int_0^\infty \min \{xb(n)/(\mu n) , 1\} \min \{ xc(n)/(\mu n), 1\} dF(x).
\]
The RHS can be decomposed as follows: 
\[
 \frac {b(n)c(n)}{n^2 \mu^2} \int_0^{\mu n / c(n)}  x^2 d F(x)    + \frac{b(n)}{\mu n} \int_{\mu n / c(n)}^{\mu n / b(n)} x dF(x) + \bar F(\mu n / b(n)).
\]
Call these terms I,II, III. 
We can use Karamata's theorem to estimate Term I:
\[
 \frac {b(n)c(n)}{n^2 \mu^2} \int_0^{\mu n / c(n)}  x^2 d F(x) \sim L( \mu n / c(n)) \frac 1{2-\alpha} \frac {b(n)c(n)}{n^2 \mu^2} (\mu n / c(n))^{2-\alpha}.
\]
This is regularly varying with index $-[\alpha (1-\alpha_c) + \alpha_c-\alpha_b]$.
Term III is regularly varying of index $-\alpha (1-\alpha_b)$ and is therefore of small order of term $I$ if $\alpha_c>\alpha_b$, and behaves like Term I if $c(n)=b(n)$. 
Term II behaves like for some constant $K_{II}$, 
\[
\frac{b(n)}{\mu n} \int_{\mu n / c(n)}^\infty x dF(x) \sim K_{II}\frac{b(n)}{c(n)}\bar F( n/ c(n)),
\]
which is regularly varying of $-[\alpha (1-\alpha_c) + \alpha_c-\alpha_b]$, like Term I. Inspecting the slowly varying parts, it can be shown that
the asymptotic behavior of Term I and Term II is the same up to a constant. \end{proof}

\begin{proof}[Proof of Lemma \ref{lemma-sb}]
We write 
$$\int_0^\infty \int_x^\infty f_n(x,y,b(n))dF(y)dF(x)= (1/2)\int_0^\infty \int_0^\infty f_n(x,y,b(n))dF(x)dF(y)$$ and we
split the integral  $\int_0^\infty \int_0^\infty f_n(x,y,b(n))dF(x)dF(y)$ in two terms, where $xy<\mu n$ (Term I), and  $xy\geq \mu n$ (Term II).
\[
I= \frac 1{\mu n} \int_{xy< \mu n} xy \min \{xb(n)/(\mu n) , 1\} \min \{ yb(n)/(\mu n), 1\} dF(x)dF(y).
\]
We break up Term I into 3 more regions: a region where both $x,y$ are smaller than $\mu n/b(n)$ (Term Ii), a region where both $x,y$ are larger than $\mu n/b(n)$ (Term Iii), and 
the region where one is smaller, and one larger (Term Iiii).
For $n$ large enough, $\mu n/b(n)$ is much smaller than $\sqrt{\mu n}$, in which case $Ii$ equals
\begin{equation}
   \frac {b(n)^2}{(\mu n)^3} (\int_{x=0}^{ \mu n/b(n)}x^2 dF(x))^2 \sim  K \frac {b(n)^2}{(\mu n)^3} \left((\mu n/b(n))^2 \bar F(\mu n/b(n) )\right)^2= \frac {K\mu n}{b(n)^2}\bar F(\mu n/b(n) )^2.
\end{equation}
This term is regularly varying of index $-[(2\alpha_b-1)+2\alpha (1-\alpha_b)]$.

For Term Iii, we can lower and upper bound the region by respectively including constraints $x< \sqrt{ \mu n}$, $y< \sqrt{\mu n}$, and by removing the constraint 
$xy<\mu n$. In both cases, we end up with the square of an integral with the same asymptotic behavior, so we get
\begin{equation}
    Iii \sim \frac {1}{\mu n} (\int_{x=\mu n/b(n)}^\infty x d F(x))^2 \sim \frac {K}n (n/b(n))^2 \bar F (n/b(n))^2.
\end{equation}
This term is regularly varying of index $-[2(\alpha-1)(1-\alpha_b)+1]$, like Term Ii.
Term Iiii can be written as
\begin{equation}
    \frac {2b(n)}{(\mu n)^2} \int_{xy< \mu n, x < \mu n/b(n) <y} x^2y dF(x)dF(y).
\end{equation}
Again, the constraint  $xy< \mu n$ is asymptotically irrelevant, leading to the behavior
\begin{equation}
      \frac {2b(n)}{(\mu n)^2} \left(\int_0^{\mu n/b(n)} x^2 dF(x) \right) \left(\int_{\mu n/b(n)}^\infty ydF(y)\right) \sim \frac{Kb(n)}{n^2} (n/b(n))^3\bar F(n/b(n))^2.
\end{equation}
Once more, this term is regularly varying of index $-[2(\alpha-1)(1-\alpha_b)+1]$.
Note that $[2(\alpha-1)(1-\alpha_b)+1]<\alpha$ if $\alpha_b>1/2$.

We now turn to Term II. By bounding the two minima by 1, we see that
\[
II= \int_{xy> \mu n} \min \{xb(n)/(\mu n) , 1\} \min \{ yb(n)/(\mu n), 1\} dF(x)dF(y) \leq \Prob{XY>\mu n},
\]
with $X,Y$ iid and regularly varying of index $-\alpha$. Due to \cite{EmbrechtsGoldie}, the product is also regularly varying of index $-\alpha$. 
Since $\alpha > [2(\alpha-1)(1-\alpha_b)+1]$, Term II is asymptotically negligible. 
\end{proof}

\begin{proof}[Proof of Lemma \ref{lem:lowdegtriangleedges}]
For any given edge between vertices $i$ and $j$ of weights $w_i>w_j$ such that $w_iw_j<\mu n$, the number of triangles it is involved in is a binomial random variable with $n-2$ trials and probability at most
\begin{align}
    &\int_1^{\infty}\min\Big(\frac{w_iw_k}{\mu n},1\Big)\min\Big(\frac{w_jw_k}{\mu n},1\Big)dF(w_k)\nonumber\\
    & \leq \int_1^{\infty}h(w_i,w_j,w_k)w_k^{-\alpha}L(w_k)dw_k\nonumber\\
    & \leq \int_1^{\infty}h(w_i,w_j,w_k)w_k^{-\alpha+\delta}dw_k \nonumber\\
    & = \int_1^{\infty}\min\Big(\frac{w_iw_k}{\mu n},1\Big)\min\Big(\frac{w_jw_k}{\mu n},1\Big)w_k^{-\alpha-1+\delta}dw_k,
\end{align}
where $h(w_i,w_j,w_k)$ is a function such that $\int_1^{w_k}h(w_i,w_j,x)dx = \min(\frac{w_iw_k}{\mu n},1)\min(\frac{w_jw_k}{\mu n},1) $, and where in the second inequality we have used the Potter bound on the slowly varying function $L(x)$. Now
\begin{align}
    & \int_1^{\infty}w_k^{-\alpha-1+\delta}\min\Big(\frac{w_iw_k}{\mu n},1\Big)\min\Big(\frac{w_jw_k}{\mu n},1\Big)dw_k\nonumber\\
    & = \int_1^{\mu n/w_i}w_k^{1-\alpha+\delta}\frac{w_iw_j}{(\mu n)^2}dw_k + 
        \int_{\mu n/w_i}^{\mu n/ w_j}w_k^{-\alpha+\delta}\frac{w_j}{\mu n}dw_k + 
        \int_{\mu n/ w_j}^\infty w_k^{-\alpha-1+\delta}dw_k \nonumber\\
    & = \Bigg(\frac{\big(\frac{\mu n}{w_i}\big)^{2-\alpha+\delta}}{2-\alpha+\delta}\frac{w_iw_j}{(\mu n)^2} + \frac{\big(\frac{\mu n}{w_i}\big)^{1-\alpha+\delta}}{\alpha-\delta-1}\frac{w_j}{\mu n} + \frac{1}{\alpha-\delta}\Big(\frac{\mu n}{w_j}\Big)^{-\alpha+\delta}\Bigg)(1+o(1))\nonumber\\
    & = (\mu n)^{-\alpha+\delta}\Big(\frac{w_i^{\alpha-1}w_j}{(2-\alpha+\delta)(\alpha-\delta-1)}+ \frac{w_j^{\alpha-\delta}}{\alpha-\delta}\Big) (1+o(1)).
\end{align}
Now when $w_i\geq w_j$ and $w_iw_j\leq \mu n$, this expression is maximized by $w_i=w_j=\sqrt{\mu n}$ for $1<\alpha<2$. Thus, the number of triangles $\{i,j\}$ is involved in is a binomial random variable with probability at most 
\begin{align}
    C_1(\mu n)^{-\alpha+\delta}\sqrt{\mu n}^{-\alpha+\delta}(1+o(1)) \leq  C_2n^{(-\alpha+\delta)/2},
\end{align}
for some $C_1,C_2>0$ and $n$ sufficiently large, where we have used that $w_iw_j\leq \mu n$ and that $w_i>w_j$. Thus, the average number of triangles involving $\{i,j\}$ is at most $C_2n^{(2-\alpha+\delta)/2}$. When choosing $\delta$ sufficiently small such that $C_2n^{(2-\alpha)/2+\varepsilon}\gg n^{(2-\alpha+\delta)/2}$, Lemma~\ref{lem-concentration} yields that for $K>C_2n^{(2-\alpha)/2+\varepsilon}$,
\begin{equation}
	\Prob{\{i,j\} \text{ in }\geq  K  \text{ triangles}}\leq \exp(-c_1K)(1+o(1)),
\end{equation} 
for some $c_1>0$. 
\end{proof}


\section{Additional proofs for Section \ref{sec:prooftausmall}} 
\label{app:manyhubs}
\begin{proof}[Proof of Lemma~\ref{lem:b2edgetriang}]
	We first bound the number of $AB_2$ edges. As the connection probability is increasing in the vertex weights, the number of $AB_2$ edges and $A,B_2,B_2$ triangles is stochastically dominated by the number of such edges and triangles respectively when all $B_2$ vertices have the upper bound of their weights of $n/\log(n)^{\zeta/\alpha}$. We therefore assume that all $B_2$ vertices have weights $n/\log(n)^{\zeta/\alpha}$. 
 
 Now $\Prob{W>x} \sim C x^{-\alpha}$ implies that
  \begin{equation}\label{eq:tail3}
     \Prob{W>x}\leq  \tilde{C}x^{-\alpha}, \quad x\geq 1 
 \end{equation}
 for some $\hat{C}>0.$ By~\eqref{eq:tail3}, the average number of $AB_2$ edges can then be upper bounded by
	\begin{equation}
		N(B_2)n\hat{C}\Bigg(\int_{1}^{\log(n)^{\frac{\zeta}{\alpha}}}x^{-\alpha-1}\frac{x}{\log(n)^{\frac{\zeta}{\alpha}}}dx+\int_{\log(n)^{\frac{\zeta}{\alpha}}}^{\sqrt{n}}x^{-\alpha-1}dx\Bigg)\leq K_1 N(B_2)n\log(n)^{-\zeta},
	\end{equation}
for some $K_1>0$. 
	Now, let $X_{ij}$ denote the indicator that $i,j$ forms an edge and $i\in A, j\in B_2$. Define $X_{ij(uv)}=X_{ij}$ when $i\neq u$ and 0 otherwise. Because $d_u\leq M$,
	\begin{equation}
		\sum_{i\in A,j\in B_2}X_{ij}\leq M+\sum_{i\in A, j\in B_2}X_{ij(uv)}.
	\end{equation}
	Thus, by Lemma~\ref{lem:chatterjeelem}
	\begin{align}
		\Prob{E_{A,B_2} > n^{(\gamma+1)/2}} &=\Prob{\sum_{i\in A,j\in B_2}X_{ij} > n^{(\gamma+1)/2}}\nonumber\\
		& \leq \exp\Bigg(-\frac{n^{(\gamma+1)/2}}{M} \log\Big(\frac{n^{(\gamma-1)/2}}{N(B_2)\log(n)^{-\zeta}}\Big)\Bigg),
	\end{align}
which proves the first part of the lemma.
	
	Similarly, conditionally on the number of type $B_2$ vertices, the average number of $AB_2B_2$ triangles can be bounded by
	\begin{align}
		& N(B_2)^2n\hat{C}\Bigg(\int_{1}^{\log(n)^{\frac{\zeta}{\alpha}}}x^{-\alpha-1}\Big(\frac{x}{\log(n)^{\frac{\zeta}{\alpha}}}\Big)^2dx+\int_{\log(n)^{\frac{\zeta}{\alpha}}}^{\sqrt{n}}x^{-\alpha-1}dx\Bigg)\nonumber\\
  &\leq 
		 K_2 N(B_2)^2n\log(n)^{-\zeta},
	\end{align}
	for some $K_2>0$. 
	
	Now, for all $i\in A$ and $j,k\in B_2$, let $X_{ijk}$ denote the indicator that $i,j,k$ forms a triangle. Let $X_{ijk(uvw)}=X_{ijk}$ when $i\neq u$, and 0 otherwise. Then, $X_{ijk(uvw)}\leq X_{ijk}$, and as edges between $j$ and $k$ vertices have weights $n/\log(n)^{\zeta/\alpha}$ and therefore are present with probability one, $\sum_{ijk}X_{ijk(uvw)}$ is independent from $X_{uvw}$. Furthermore,
	\begin{equation}
		\sum_{i,j,k}X_{ijk}\leq d_u^2+\sum_{i,j,k}X_{ijk(uvw)}\leq M^2+\sum_{i,j,k}X_{ijk(uvw)}.
	\end{equation}
	Therefore, Lemma~\ref{lem:chatterjeelem} yields
	\begin{align}
		\Prob{ \triangle_{A,B_2,B_2}> n^\gamma}& \leq \exp\Big(-\frac{n^\gamma}{M^2}\log\Big(\frac{n^{\gamma-1}}{3K_1 N(B_2)^2\log(n)^{-\zeta}}))\Big).
	\end{align}
\end{proof}

\begin{proof}[Proof of Lemma~\ref{lem:Adegrees}]
	As the degree of a vertex of lower weights is stochastically dominated by the degree of a vertex with higher weight, we assume that vertex $i$ has the maximal weight $\sqrt{\mu n}$.
	Given the weights, the degree of a vertex with weight $\sqrt{\mu n}$ is a sum of independent Bernoulli random variables with mean $\sum_{i}\min(W_i/\sqrt{\mu n},1)$. We now compute the probability of the event that this mean is large. 
	First of all,
		\begin{align}
		\Var{W\ind{W\leq \sqrt{\mu n}}}\leq \Exp{W^2\ind{W\leq \sqrt{\mu n}}}\leq \hat{C}\int_1^{\sqrt{\mu n}}x^{1-\alpha}dx =\hat{C} (\mu n)^{(2-\alpha)/2}(1+o(1)).
	\end{align}
Thus, by Bernsteins' inequality,
	\begin{align}
		\Prob{|\sum_{i}W_i\ind{W_i\leq \sqrt{\mu n}}-\mu n|> \mu n} & \leq \exp\Big(-\frac{(\mu n)^2}{2n\hat{C}(\mu n)^{(2-\alpha)/2}+2(\mu n)^{3/2}}\Big)\nonumber\\
  & \leq \exp(-(\mu n)^{1/2})\nonumber\\
  & \leq \exp(-n^{(\gamma-1)/2}),
	\end{align}
	for all $\alpha\in(1,2)$ and $\gamma\leq 2$. 
For $\gamma>2$, and $n$ sufficiently large, Bernstein's inequality yields
	\begin{align}
		\Prob{|\sum_{i}W_i\ind{W_i\leq \sqrt{\mu n}}-\mu n|> n^{1/2+\gamma/3}}& \leq 
  \exp(-\frac{n^{1+2\gamma/3}}{(1+o(1))2(\mu n)^{3/2}})\nonumber\\
  & \leq \exp\Big(-n^{(\gamma-1)/2}\Big).
	\end{align} 
Thus, 
	\begin{equation}
	\Prob{\sum_{i}\frac{W_i}{\sqrt{\mu n}}\ind{W_i\leq \sqrt{\mu n}}> 2\sqrt{\mu n}+n^{\gamma/3}}\leq \exp\Big(-n^{(\gamma-1)/2}\Big).
\end{equation} 
Thus, the probability that the mean degree of a vertex with weight $W_i=\sqrt{\mu n}$ is at most $2\sqrt{\mu n}$ or $n^{\gamma/3}$ is at most $\exp(-n^{(\gamma-1)/2})$. On this event, the degree of a vertex of weight $\sqrt{\mu n}$ is a sum of indicators with mean at most $2\sqrt{\mu n}$ or $n^{\gamma/3}$.

	We use Lemma~\ref{lem-concentration} and the union bound to show that for $\gamma<2$
	\begin{align}
		\Prob{\bar{\mathcal{F}}}& \leq \sum_{i=1}^n\Prob{d_i>4\sqrt{\mu n}\mid W_i\leq\sqrt{\mu n}}\nonumber\\
		&\leq n\Prob{d_i>4\sqrt{\mu n}\mid W_i= \sqrt{\mu n}} \leq n\exp(-2\sqrt{\mu n}(2\log(2)-1)).
	\end{align}
		Similarly, for $\gamma>2$,
	\begin{align}
		\Prob{\bar{\mathcal{F}}}& \leq \sum_{i=1}^n\Prob{d_i>4n^{\gamma/3}\mid W_i =  \sqrt{\mu n}}
	\leq n\exp(-2n^{\gamma/3}(2\log(2)-1)).
	\end{align}
	Thus, as $\gamma/3>(\gamma-1)/2$ for $\gamma\in(1,3)$,
	\begin{equation}
		\Prob{\bar{\mathcal{F}}}\leq \exp\Big(-Dn^{(\gamma-1)/2} \log(n^{(\gamma-3)/2+\alpha})\Big).
	\end{equation}
\end{proof}


\section{Proof of Theorem~\ref{thm:triangldptausmall}: many dominating hubs}\label{sec:prooftausmallgamma}
\begin{proof}[Proof of Theorem~\ref{thm:triangldptausmall}]
 \textit{Lower bound.}
  As a lower bound, we compute the number of triangles with at least one vertex of weight $>\mu  n$. By Lemma~\ref{lem:nvertices}, the probability that at least $n^{\lambda}$ vertices of weights at least $\mu n$ are present can be bounded by 
\begin{equation}
    \Prob{n^\lambda \text{ vertices of weight }>\mu n}\geq \frac{1}{\sqrt{2n}}\exp(-n^\lambda\log(n^{\lambda+\alpha-1}))(1+o(1)).
\end{equation}
Now, $n^\lambda$ vertices of weight at least $\mu n$ generate $n^{1+2\lambda}/2$ triangles. Indeed, every pair of 2 hubs forms a triangle with any of the other $n$ vertices, so $n^{1+2\lambda}/2$ in total. Therefore, $\sqrt{2a}n^{(\gamma-1)/2}$ vertices of weight at least $n$ create $an^\gamma$ triangles. Thus, 
\begin{equation}\label{eq:trianglb}
    \Prob{\triangle_n>an^\gamma}\geq  \frac{1}{\sqrt{2 n}} \exp\big(-\sqrt{2a}n^{(\gamma-1)/2}\log\big(\frac{\gamma-3}{2}+\alpha\big)\big)(1+o(1)).
\end{equation}

\textit{Upper bound.}
We now distinguish and bound different types of triangles. 


\textit{BBB: all weights larger than $\sqrt{\mu n}$.}
Let $X_{u,v,w}$ denote the event that $u,v,w$ forms a triangle and that $w_u,w_v, w_w>\sqrt{\mu n}$. Then,
\begin{equation}
    \sum_{u,v,w}X_{u,v,w}\leq \sum_{u,v,w}\ind{w_u,w_v,w_w>\sqrt{\mu n}}=\Big(\sum_u\ind{w_u>\sqrt{\mu n}}\Big)^3.
\end{equation}
By Lemma~\ref{lem:nvertices}, when $\gamma>3-3\alpha/2,$
\begin{align}
\Prob{\sum_{u,v,w}X_{u,v,w}>\varepsilon n^\gamma}& \leq     \Prob{ \varepsilon^{1/3}n^{\gamma/3} \text{ vertices of weight }>\sqrt{\mu n} }\nonumber\\
    & \leq \exp\Big(-\varepsilon^{1/3}n^{\gamma/3} \log(n^{\gamma/3-(2-\alpha)/2})\Big),
\end{align}
by Lemma~\ref{lem:nvertices}. 
Now for fixed $\varepsilon>0$ and $\gamma>1$ therefore 
\begin{equation}\label{eq:triangbbb}
    \Prob{\triangle_{BBB}}\leq \exp\Big(-\sqrt{2a} n^{(\gamma-1)/2} \log(n^{(\gamma-3)/2+\alpha})\Big),
\end{equation}
for $n$ sufficiently large.

\textit{AAA: all weights smaller than $\sqrt{\mu n}$.}
By Lemma~\ref{lem:Adegrees}, we may work on the event $\mathcal{F}$, so that all degrees are bounded by $M=4\max(\sqrt{n},n^{\gamma/3})$. 
We aim to design two sets of indicators that deal with the dependencies between the presences of different triangles, so that we can use Lemma~\ref{lem:chatterjeelem}. Let $Y_{uvw}$ denote the indicator that $u,v,w$ forms a triangle and that $u,v,w$ have degree at most $\sqrt{\mu n}$. Now $Y_{uvw}\leq X_{uvw}$. We now define a set of indicators $Y_{xyz(uvw)}$. 

When $|\{x,y,z,u,v,w\}|=6$, we set $Y_{xyz(uvw)}=Y_{xyz}$, and otherwise, we set $Y_{xyz(uvw)}=0$. 
Now $\sum_{x,y,z}Y_{xyz(uvw)}$ is independent of $X_{uvw}$, as none of the entries of the summation depend on the edges $uv, uw, vw$. 

Furthermore, $\sum_{x,y,z}Y_{xyz(uvw)}\leq \sum_{x,y,z}Y_{x,y,z}$. Finally, 
\begin{equation}
    \sum_{x,y,z}Y_{x,y,z}\leq 3M^2+ \sum_{x,y,z}Y_{xyz(uvw)},
\end{equation}
as at most $M^2$ triangles involve vertex $u$ since its maximal degree is $M$. Thus, by Lemma~\ref{lem:chatterjeelem},
\begin{equation}
    \Prob{\sum_{u,v,w}Y_{uvw}>\varepsilon n^\gamma}\leq \exp(-\frac{\varepsilon n^\gamma}{M^2}\log(n^{\gamma-\frac{(2-\alpha)}{2}})).
\end{equation}
Again, for $\gamma>1$ and $n$ sufficiently large this indicates that
\begin{equation}\label{eq:triangaaa}
    \Prob{\triangle_{AAA}}\leq \exp\Big(-\sqrt{2a} n^{(\gamma-1)/2} \log(n^{(\gamma-3)/2+\alpha})\Big),
\end{equation}
as $\gamma-2\gamma/3>(\gamma-1)/2$ for $\gamma\in(1,3)$.


\textit{ABB triangles.}

To bound the number of these triangles, we split the $B$ vertices into $B_1$ vertices, $B_2'=B\cap B_2$ and $B_3'=B\cap B_3$.
We first investigate the number of $AB_3'B_3'$ triangles. When $B\cap B_3=\emptyset$, we are done. Otherwise, $A\subseteq B_3$ and by~\eqref{eq:triangb3b3b3}, for $\gamma>3-3\alpha/2$ and $D=\sqrt{2a}/h(1+\varepsilon)$,
\begin{equation}\label{eq:ab3b3}
	\Prob{\triangle_{A,B_3',B_3'}>\varepsilon n^\gamma}\leq \Prob{\triangle_{B_3,B_3,B_3}>\varepsilon n^\gamma}\leq \exp(-\sqrt{2a} n^{(\gamma-1)/2}\log(n^{(\gamma-3)/2+\alpha})).
\end{equation}

%

We now bound the number of $AB_1B_1$ type triangles by $n$ times the number of $B_1$ vertices squared. This is the number of ways to choose 2 type $B_1$ vertices, and one other vertex. Thus, by~\eqref{eq:Nb1},
\begin{align}\label{eq:triangabb}
    \Prob{\triangle_{AB_1B_1}>an^\gamma}& \leq \Prob{N(B_1)>\sqrt{2a}n^{(\gamma-1)/2}}\nonumber\\
    & \leq \exp\Big(-\sqrt{2a}n^{(\gamma-1)/2} \log(n^{(\gamma-3)/2+\alpha}\log(n)^{-\zeta})\Big).
\end{align}
Consequently,
\begin{equation}\label{eq:Ab1b1}
    \lim_{n\to\infty}\frac{\log\Big(\Prob{\triangle_{AB_1B_1}>a n^\gamma}\Big)}{n^{(\gamma-1)/2}\log(n)}=\sqrt{2a}\Big(\frac{\gamma-3}{2}+\alpha\Big).
\end{equation}

We bound the number of $AB_3'B_i'$ for $i=1,2$ by the number of $B_i'$ vertices times the number of edges between $AB_3'$ vertices. Again, when $B_3'$ is empty, we are done. Otherwise, $A\subseteq B_3$, so that by~\eqref{eq:edb3b3} 
\begin{equation}
    \Prob{E_{A,B_3'}> n^{(\gamma+1)/4}}\leq \Prob{E_{B_3,B_3}> n^{(\gamma+1)/4}}\leq \exp(-\sqrt{2a} n^{(\gamma-1)/2}\log(n^{(\gamma-3)/2+\alpha})).
\end{equation}
On this event, by~\eqref{eq:Nb1b2}, and for $n$ sufficiently large,
\begin{align}\label{eq:triangab2b3}
    \Prob{\triangle_{AB_3'B_i'}>\varepsilon n^\gamma}& \leq \Prob{N(B_1\cup B_2)>\varepsilon n^{(3\gamma-1)/4}}\nonumber\\
    & \leq \exp\Big(-\varepsilon n^{(3\gamma-1)/4} \log(n^{(\gamma+1)/4}\log(n^{(\gamma-3)/2+\alpha})^{-1})\Big)\nonumber\\
    & \leq \exp\Big(-\sqrt{2a} n^{(\gamma-1)/2} \log(n^{(\gamma-3)/2+\alpha}\log(n)^{-\alpha})\Big).
\end{align}

We now bound the number of $AB_2'B_1$ triangles by $N(B_1)$ times the number of $AB_2$ edges. 
By Lemma~\ref{lem:b2edgetriang} with $M=\max(\sqrt{n},n^{\gamma/3})$, and for $n$ sufficiently large
\begin{align}\label{eq:eab2p}
    \Prob{E_{A,B_2'} > \frac{\varepsilon}{\sqrt{2a}} n^{(\gamma+1)/2}} & \leq \Prob{E_{A,B_2} > \frac{\varepsilon}{\sqrt{2a}}n^{(\gamma+1)/2}}\nonumber\\
    & \leq \exp(-\sqrt{2a}n^{(\gamma-1)/2}\log(n^{(\gamma-3)/2+\alpha}))
\end{align}
as long as $N(B_2)\leq n^{(\gamma-1)/2}\log(n)^{\zeta}$. By~\eqref{eq:Nb1b2} this happens with probability 
\begin{align}\label{eq:NB1}
	\Prob{N(B_2)>n^{(\gamma-1)/2}\log(n)^{\zeta}}
	& \leq \exp\Big(-n^{(\gamma-1)/2}\log(n)^{\zeta}\log\Big(\log(n)^{\zeta}\log(n^{(\gamma-3)/2+\alpha})^{-1}\Big)\Big)\nonumber\\
	& \leq \exp\Big(-\sqrt{2a}n^{(\gamma-1)/2} \log(n^{(\gamma-3)/2+\alpha})\Big),
\end{align}
 when $\zeta>1$ and $n$ is sufficiently large. 
Thus, on the event of~\eqref{eq:eab2p}, by~\eqref{eq:Nb1},
\begin{align}\label{eq:triangab2b1}
    \Prob{\triangle_{AB_1B_2'}>\varepsilon n^\gamma}& \leq \Prob{N(B_1)E_{AB_2'}>\varepsilon n^{\gamma}}\nonumber\\
    & \leq \Prob{N(B_1)>\sqrt{2a} n^{(\gamma-1)/2}}\nonumber\\
    & \leq \exp\Big(-\sqrt{2a} n^{(\gamma-1)/2} \log(n^{(\gamma-3)/2+\alpha}\log(n)^{-\zeta})\Big).
\end{align}
We now consider A$B_2'B_2'$ triangles. 
By Lemma~\ref{lem:b2edgetriang},
\begin{align}\label{eq:ab2b2triang}
	\Prob{ \triangle_{A,B_2',B_2'}> n^\gamma}& \leq \exp\Big(-\frac{n^\gamma}{M^2}\log(n^\gamma/(3K_1 N(B_2')^2n\log(n)^{-\zeta}))\Big)\nonumber\\
	& \leq \exp\Big(-\sqrt{2a}n^{(\gamma-1)/2} \log(n^{(\gamma-3)/2+\alpha})\Big),
\end{align}
as long as $N(B_2)\leq n^{(\gamma-1)/2}\log(n)^{\zeta}$ and $\gamma>1$, which happens with probability~\eqref{eq:NB1}.

Thus, ~\eqref{eq:Ab1b1},~\eqref{eq:ab3b3}, ~\eqref{eq:triangab2b3},~\eqref{eq:triangab2b1} and~\eqref{eq:ab2b2triang} yield that 
\begin{equation}\label{eq:triangabbfinal}
    \lim_{n\to\infty}\frac{\log\Big(\Prob{\triangle_{ABB}>n^\gamma}\Big)}{n^{(\gamma-1)/2}\log(n)}=\sqrt{2a}\Big(\frac{\gamma-3}{2}+\alpha\Big).
\end{equation}

\textit{AAB triangles.} When bounding the number of $AAB$ triangles, 
we work on the event $\mathcal{F}$ from Lemma~\ref{lem:Adegrees}, so that all type-A vertices have degrees at most $M=\max(4\sqrt{\mu n},4n^{\gamma/3})$. Again, we split the B vertices into $B_1', B_2'$ and $B_3'$. 

We now bound the number of $AAB_i'$ triangles with $i\in B_1'\cap B_2'$. To do so, we first show that the event $\mathcal{E}$ that there are at most $ n^{(\gamma+1)/2}$ edges between type A vertices happens with high probability. Let $X'_{ij}$ denote the indicator that an edge is present between $i$ and $j$, let $X_{ij}$ be the indicator that $i,j\in A$ and that $\{i,j\}$ is an edge. Finally, we define $X_{ij(uv)}=X_{ij}$ when $i,j,u,v$ are all distinct, and set $X_{ij(uv)}=0$ when $i$ or $j$ overlaps with $u$ or $v$. Then $\sum_{ij}X_{ij(uv)}$ is independent from $X_{uv}$, and furthermore
\begin{equation}
   E_{A_,A}:=\sum_{ij}X_{ij} \leq d_u+d_v+ \sum_{ij}X_{ij(uv)}\leq 2M +\sum_{ij}X_{ij(uv)}
\end{equation}
Then, Lemma~\ref{lem:chatterjeelem} shows that
\begin{align}
    \Prob{E_{A,A}>n^{(3\gamma-1)/4}} & \leq \exp\Big(-\frac{n^{(3\gamma-1)/4}}{2n^{\gamma/3}}\log\Big(\frac{n^{(3\gamma-1)/2}}{3\mu n}\Big)\Big)\nonumber\\
    & < \exp\Big(-\sqrt{2a} n^{(\gamma-1)/2} \log(n^{(\gamma-3)/2+\alpha})\Big),
\end{align}
for $\gamma\in[2,3)$. Furthermore, 
\begin{align}\label{eq:eaagammasmall}
    \Prob{\mathcal{E}}: = \Prob{E_{A,A}>4\mu n} & \leq \exp\Big(-\frac{4\mu n}{8K\sqrt{\mu n}}\log\Big(\frac{4\mu n}{3\mu n}\Big)\Big)\nonumber\\
    & < \exp\Big(-\sqrt{2a}n^{(\gamma-1)/2} \log(n^{(\gamma-3)/2+\alpha})\Big),
\end{align}
for $\gamma\in [1,2)$ and $n$ sufficiently large. 

We now condition on the event $\mathcal{E}$. On this event, we can bound the number of $AAB_i$ triangles for $i=1,2$ by $4\mu n$ times the number of type $B_i$ vertices for $\gamma\in[1,2)$. Thus, on $\mathcal{E}$, by Lemma~\ref{lem:nvertices}
\begin{align}\label{eq:triangaab1}
    \Prob{\triangle_{AAB_i}>\varepsilon n^\gamma} &\leq \Prob{N(B_2'\cup B_1)>\varepsilon n^{\gamma-1}/(4\mu)} \leq \exp\Big(-\frac{\varepsilon n^{\gamma-1}}{4\mu}\log(n^{\gamma-1-(\gamma-1)/2})\Big)\nonumber\\
    & \leq \exp\Big(-\sqrt{2a}n^{(\gamma-1)/2} \log(n^{(\gamma-1)/2-(2-\alpha)/2})\Big),
\end{align}
for $\gamma\in[1,2)$ and $n$ sufficiently large. Similarly, for $\gamma\in[2,3),$ by Lemma~\ref{lem:nvertices}
\begin{align}\label{eq:triangaab2}
    \Prob{\triangle_{AAB_i}>\varepsilon n^\gamma} &\leq \Prob{N(B_2'\cup B_1)>\varepsilon n^{(\gamma+1)/4}} \nonumber\\
    &\leq \exp(-\varepsilon n^{(\gamma+1)/4}\log(n^{(\gamma+1)/4-(\gamma-1)/2}))\nonumber\\
    & \leq \exp\Big(-\sqrt{2a}n^{(\gamma-1)/2} \log(n^{(\gamma-1)/2-(2-\alpha)/2})\Big),
\end{align}
for $n$ sufficiently large. 

Finally, we bound the number of $AAB_3'$ triangles. When $B_3'$ is empty, we are done. Otherwise, $A\subseteq B_3$, and any $AAB_3'$ triangle is also an $B_3B_3B_3$ triangle, whose number can be bounded by~\eqref{eq:ab3b3}.

To conclude,~\eqref{eq:triangbbb},~\eqref{eq:triangbbb},~\eqref{eq:triangabbfinal},~\eqref{eq:triangaab1} and~\eqref{eq:triangaab2} with a limit of $\varepsilon\downarrow 0$ yield that
\begin{equation}
    \log\big(\Prob{\triangle_n>a n^\gamma}\big)\leq -\sqrt{2a}n^{(\gamma-1)/2} \log(n^{(\gamma-3)/2+\alpha})(1+o(1)).
\end{equation}

Combining this with~\eqref{eq:trianglb} gives

\begin{equation}
    \lim_{n\to\infty}\frac {\log  \Prob{\triangle_n>n^\gamma}}{n^{(\gamma-1)/2}\log(n)} =\sqrt{2a}\Big(\frac{\gamma-3}{2}+\alpha\Big).
\end{equation}

\end{proof}

\section{Proof of Theorem \ref{thm:thetasmall}} 
\label{app:proof:thm:thetasmall}

The proof follows the same steps as the proof of Theorem \ref{thm-triangle-singlebigjump}. To avoid unnecessary repetitions, we restrict ourselves to
indicating which steps require nontrivial modifications. 
First of all, the asymptotic expansion $c_{n^\theta}(n) \sim L^*(n) n^{\beta+\frac\theta 2 \frac{1}{\alpha-1}}=o(n)$ follows from a straightforward modification of the proof of Lemma \ref{lemma-can}, and the $o(n)$ behavior follows from the inequality $\theta< \frac 32 \alpha-2$. 

The first major step is to prove that the analogue of Theorem \ref{thm-gn} holds, with $a$ replaced by $n^\gamma$. 
To establish this, we first show how to modify the proof of Proposition \ref{prop-onebigjumpneeded}, in particular how to manage the ten-term bound (\ref{eq-tenterms}). The three terms that require modification are the third, fifth and sixth term. For the third term, we can again invoke Lemma \ref{lemma-sb} to conclude that this term behaves like $c \varepsilon^{(\alpha-1)/2} n^\theta \mu_n$. 
For the fifth term, apply Lemma \ref{lemma-sbc} with $\alpha_b=\alpha_c= \beta + \frac {\theta}{2} \frac 1{\alpha-1}$, to conclude that 
this term is regularly varying with index $1-\alpha + \alpha^2/(4(\alpha-1))+ \alpha \theta/(2(\alpha-1))$. 
To show that the fifth term is of small order in $n$, it suffices to show that 
\[
1-\alpha + \alpha^2/(4(\alpha-1))+ \alpha \theta/(2(\alpha-1)) < 3-\alpha 3/2+\gamma,
\]
which is equivalent to 
\[
3\alpha^2 - 2\alpha + \theta (4-2\alpha) < 8(\alpha-1). 
\]
This follows from the inequality $\theta< \alpha 3/2-2$.
As before, the sixth term is of smaller order than the fifth term, and therefore is also negligible as $n$ grows large. 
This leads to the conclusion that
\begin{equation}
    \Prob{G_n > (1+n^\theta )m_n ; L_n(\varepsilon c_{n^\theta }(n))=0} = o( n \Prob{W_1> c_{n^\theta}(n)}),
\end{equation}
which is the desired extension of Proposition \ref{prop-onebigjumpneeded}.
The statement and the proof of Proposition \ref{prop-wlln} extends straightforwardly to handle the case where $a$ is replaced by $n^\theta$. 
This readily leads to the conclusion that 
\begin{equation}
        \Prob{G_n > m_n (1+n^\theta) } = (1+o(1)) n \Prob{ W > c_{n^\theta}(n)}. 
\end{equation}
To derive the same estimate for $\triangle_n$ we use the same steps as in Section \ref{sec:prooftaularge}: the proof of the asymptotic upper bound again follows from Lemma \ref{lem:expconditionedtausmallnew}. To derive an asymptotic lower bound, we need to modify the first part Lemma \ref{lem-lb-additionaltriangles}.
In particular, we need that 
\begin{equation}
 \triangle_n (\delta,n^\theta)/(m_n(1+n^\theta)) \rightarrow 1
\end{equation}
in probability. 
The proof of this statement follows by simply following the same steps as the first part of Lemma \ref{lem-lb-additionaltriangles}, with $a$ replaced
by $n^\theta$. With the appropriate analogues of Lemma's \ref{lem:expconditionedtausmallnew} and \ref{lem-lb-additionaltriangles} in place, the 
proof of Theorem \ref{thm:thetasmall} follows straightforwardly. 
\end{document}